\numberwithin{equation}{section}
\newtheorem{thm}{Theorem}[section]
\newtheorem{lem}[thm]{Lemma}
\newtheorem{cor}[thm]{Corollary}
\theoremstyle{definition}
\newtheorem{defn}[thm]{Definition}
\theoremstyle{remark}
\newtheorem{rem}[thm]{Remark}
\newcommand{\aff}{\mathrm{aff}}
\newcommand{\Z}{\mathbb{Z}}
\newcommand{\R}{\mathbb{R}}
\DeclareMathOperator{\Stab}{Stab}
\DeclareMathOperator{\cInd}{c-Ind}
\newcommand{\trivrep}{\mathbf{1}}
\DeclareMathOperator{\Hom}{Hom}
\DeclareMathOperator{\supp}{supp}
\DeclareMathOperator{\Ind}{Ind}
\DeclareMathOperator{\End}{End}
\DeclareMathOperator{\Ker}{Ker}
\DeclareMathOperator{\val}{val}
\title[Comparison of Hecke modules and mod $p$ representations]{A comparison between pro-$p$-Iwahori Hecke modules and mod $p$ representations}
\author{Noriyuki Abe}
\address{Graduate School of Mathematical Sciences, the University of Tokyo, 3-8-1 Komaba, Meguro-ku, Tokyo 153-8914, Japan.}
\email{abenori@ms.u-tokyo.ac.jp}
\subjclass[2010]{22E50,20C08}
\begin{document}
\begin{abstract}
We give an equivalence of categories between certain subcategories of modules of pro-$p$ Iwahori-Hecke algebras and modulo $p$ representations.
\end{abstract}
\maketitle
\section{Introduction}
Let $G$ be a connected reductive $p$-adic group and $K$ a compact open subgroup of $G$.
Then one can attache the Hecke algebra $\mathcal{H}$ to this pair $(G,K)$ and we have a functor $\pi\mapsto \pi^K = \{v\in \pi\mid \pi(k)v = v\ (k\in K)\}$ from the category of smooth representations of $G$ to the category of $\mathcal{H}$-modules.
These algebra and functor are powerful tools to study representation theory of $G$.
In a classical case, namely for smooth representations over the field of complex numbers, this functor gives a bijection between the set of isomorphism classes of irreducible smooth representations of $G$ such that $\pi^K\ne 0$ and the set of isomorphism classes of simple $\mathcal{H}$-modules.
Moreover, the famous theorem of Borel~\cite{MR0444849} says that the functor gives an equivalence of categories between the category of smooth representations $\pi$ of $G$ which is generated by $\pi^K$ and the category of $\mathcal{H}$-modules when $K$ is an Iwahori subgroup.

In this paper, we study modulo $p$ representation theory of $G$.
In this case, it is natural to consider a pro-$p$-Iwahori subgroup $I(1)$ which is the pro-$p$ radical of an Iwahori subgroup since any non-zero modulo $p$ representation has a non-zero vector fixed by the pro-$p$-Iwahori subgroup.
The corresponding Hecke algebra is called a pro-$p$ Iwahori-Hecke algebra.
The aim of this paper is to give a relation between $\mathcal{H}$-modules and modulo $p$ representations.

Such a relation was first discovered by Vign\'eras when $G = \mathrm{GL}_2(\mathbb{Q}_p)$ \cite{MR2392364}.
Based on a classification result due to Barthel-Livn\'e~\cite{MR1361556,MR1290194} and Breuil~\cite{MR2018825}, she proved that the functor $\pi\mapsto\pi^{I(1)}$ gives a bijection between simple objects.
This was enhanced to the level of categories by Ollivier~\cite{MR2572257}.
Namely she proved that the category of $\mathcal{H}$-modules is equivalent to the category of modulo $p$ representations of $G$ which are generated by $\pi^{I(1)}$.
The quasi-inverse of this equivalence is given by $M\mapsto M\otimes_{\mathcal{H}}\cInd_{I(1)}^G\trivrep$ where $\cInd_{I(1)}^G\trivrep$ is the compact induction from the trivial representation of $I(1)$.

However, Ollivier also showed that we cannot expect such correspondence in general.
When $G = \mathrm{GL}_2(F)$ where $F$ is a $p$-adic field such that the number of the residue field is greater than $p$, for a `supersingular' simple module $M$ (we do not recall the definition of supersingular modules since we do not use it in this paper), Ollivier showed that $(M\otimes_{\mathcal{H}}\cInd_{I(1)}^G\trivrep)^{I(1)}$ is not finite-dimensional.
Since simple modules of $\mathcal{H}$ are finite-dimensional, it says that we have no equivalence of categories in this case.

Still we can expect that there is such a correspondence if we `avoid' supersingular representations/modules.
It is proved by Olliver-Schneider \cite{arXiv:1602.00738} that this expectation is true when $G = \mathrm{SL}_2(F)$.
The aim of this paper is to extend this for any $G$.

Let $G$ be a (general) connected reductive $p$-adic group.
In this case, as a consequence of classification theorems~\cite{MR3600042,arXiv:1406.1003_accepted} and the calculation of the invariant part of irreducible representations~\cite{arXiv:1703.10384}, the functor $\pi\mapsto \pi^{I(1)}$ gives a bijection between irreducible modulo $p$ representations of $G$ and simple $\mathcal{H}$-modules which are `far from supersingular representations/modules'.
The aim of this paper is to generalize this correspondence to the level of categories.
More precisely, we prove the equivalence of the following two categories.
\begin{itemize}
\item The category of $\mathcal{H}$-modules $M$ such that $\dim(M)<\infty$ and a certain element of the center of $\mathcal{H}$ is invertible on $M$ (see Definition~\ref{defn:C}).
\item The category of modulo $p$ representations $\pi$ of $G$ such that
\begin{itemize}
\item $\pi$ is generated by $\pi^{I(1)}$.
\item $\pi$ has a finite length.
\item any irreducible subquotient of $\pi$ is isomorphic to a subquotient of $\Ind_B^G\sigma$ where $B$ is a minimal parabolic subgroup and $\sigma$ is an irreducible representation of the Levi quotient of $B$.
\end{itemize}
\end{itemize}
Note that, an $\mathcal{H}$-module $M$ is supersingular if and only if certain elements in the center of $\mathcal{H}$ act by zero and a modulo $p$ irreducible admissible representation $\pi$ of $G$ is supersingular if and only if it is supercuspidal, namely it does not appear as a subquotient of a parabolically induced representation from an irreducible admissible representation of a proper Levi subgroup.
Therefore some conditions as above says that $M$ (resp.\ $\pi$) is `far from supersingular modules (resp.\ representations)'.

We give an outline of the proof.
Since the correspondence is true for irreducible representations, by induction on the length, it is sufficient to prove the following (Theorem~\ref{thm:injective}).
Let $M$ be an $\mathcal{H}$-module which we are considering.
Then $M\to M\otimes_{\mathcal{H}}\Ind_{I(1)}^G\trivrep$ is injective.
This theorem is proved in Section~\ref{sec:C and injectivity}.
Here are some reductions.
\begin{itemize}
\item Let $\mathcal{A}$ be the Bernstein subalgebra introduced in \cite{MR3484112}.
Since we have an embedding $M\hookrightarrow \Hom_{\mathcal{A}}(\mathcal{H},M)$, it is sufficient to prove the theorem for $\Hom_{\mathcal{A}}(\mathcal{H},M)$.
\item We have a decomposition of $M|_{\mathcal{A}}$ along the `support' (Definition~\ref{defn:support}). We may assume that the support of $M$ is contained in a Weyl chamber.
\item Using a result in \cite{arXiv:1406.1003_accepted}, parabolic inductions and a result of Ollivier-Vign\'eras~\cite{arXiv:1703.04921}, we may assume that the support is the dominant Weyl chamber.
\item We can rewrite $\Hom_{\mathcal{A}}(\mathcal{H},M)$ as $M'\otimes_{\mathcal{A}}\mathcal{H}$ for some $M'$.
Hence it is sufficient to prove that $M'\otimes_{\mathcal{A}}\mathcal{H}\to M'\otimes_{\mathcal{A}}\cInd_{I(1)}^G\trivrep$ is injective.
\end{itemize}
By a result in \cite{MR3666048}, both $M\otimes_{\mathcal{A}}\mathcal{H}$ and $M\otimes_{\mathcal{A}}\cInd_{I(1)}^G\trivrep$ relate $\cInd_K^GV$ where $K$ is a special parahoric subgroup and $V$ a certain representation of $K$.
The structure of this representations is studied in \cite{MR3600042} and using such result we prove the injectivity.

It is almost immediate to prove our main theorem from the above injectivity.
This is done in Section~\ref{sec:Theorem}.

\subsection*{Acknowledgment}
The author was supported by JSPS KAKENHI Grant Number 18H01107.

\section{Notation and Preliminaries}\label{sec:Preliminaries}
Let $F$ be a non-archimedean local field of residue characteristic $p$ and $G$ a connected reductive group over $F$.
Let $C$ be an algebraically closed field of characteristic $p$.
This is the coefficient field of representations in this paper.
\emph{All representations in this paper are smooth representations over $C$.}

In general, for any algebraic group $H$ over $F$, we denote the group of valued points $H(F)$ by the same letter $H$.
Fix a maximal split torus $S$ of $G$ and minimal parabolic subgroup $B$ containing $S$.
The centralizer $Z$ of $S$ in $G$ is a Levi subgroup of $B$.
We denote the unipotent radical of $B$ by $U$ and the opposite of $B$ containing $Z$ by $\overline{B}$.
The unipotent radical of $\overline{B}$ is denoted by $\overline{U}$.

Consider the reduced apartment corresponding to $S$ and take an alcove $\mathbf{A}_0$ and a special point $\mathbf{x}_0$ from the closure of $\mathbf{A}_0$.
Let $K$ be the special parahoric subgroup corresponding to $\mathbf{x}_0$ and $I$ the Iwahori subgroup determined by $\mathbf{A}_0$.
Let $I(1)$ be the pro-$p$ Iwahori subgroup attached to $\mathbf{A}_0$, namely the pro-$p$ radical of $I$.
The space of $C$-valued compactly supported $I(1)$-biinvariant functions $\mathcal{H}$ has a structure of a $C$-algebra via the convolution product.
The algebra $\mathcal{H}$ is called pro-$p$ Iwahori-Hecke algebra.
The structure of this algebra is studied by Vign\'eras~\cite{MR3484112}.

Let $N_G(S)$ be the normalizer of $S$ in $G$ and put $W_0 = N_G(S)/Z$, $W = N_G(S)/(Z\cap K)$ and $W(1) = N_G(S)/(Z\cap I(1))$.
Let $G'$ be the subgroup of $G$ generated by $U$ and $\overline{U}$.
Note that this is not a group of the valued points of an algebraic group in general.
Let $W_\aff$ be the image of $G'\cap N_G(S)$ in $W$.
The action of $W_\aff$ on the apartment is faithful and therefore it is a subgroup of the group of affine transformations of the apartment.
Let $S_\aff$ be the set of reflections along the walls of $\mathbf{A}_0$.
Then $(W_\aff,S_\aff)$ is a Coxeter system.
Denote its length function by $\ell$.
Let $N_{W}(\mathbf{A}_0)$ be the stabilizer of $\mathbf{A}_0$ in $W$.
Then the group $W$ is the semi-direct product of $W_\aff$ and $N_W(\mathbf{A}_0)$.
The function $\ell$ is extended to $W$, trivially on $N_W(\mathbf{A}_0)$.
We also inflate $\ell$ to $W(1)$ via $W(1)\to W$.
We have the Bruhat order on $(W_\aff,S_\aff)$ and we extend it to $W$ by: $w_1\omega_1 < w_2\omega_2$ if and only if $w_1 < w_2$ and $\omega_1 = \omega_2$ where $w_1,w_2\in W_\aff$ and $\omega_1,\omega_2\in N_W(\mathbf{A}_0)$.
For $w_1,w_2\in W(1)$, we say $w_1 < w_2$ if $\overline{w}_1 < \overline{w}_2$ where $\overline{w}_i$ is the image of $w_i$ in $W$ ($i = 1,2$).
As usual we say $w_1 \le w_2$ if and only if $w_1 < w_2$ or $w_1 = w_2$.

We give some of structure theorems of $\mathcal{H}$.
For $w\in W(1)$, let $T_w$ be the characteristic function on $I(1)\widetilde{w}I(1)$ where $\widetilde{w}\in N_G(S)$ is a lift of $w$.
Then $T_w$ does not depend on the choice of a lift and, since we have the bijection $I(1)\backslash G/I(1)\simeq W(1)$, $\{T_w\mid w\in W(1)\}$ is a basis of $\mathcal{H}$.
This basis is called Iwahori-Matsumoto basis.
This basis satisfies the following braid relations:
\[
\text{$T_{w_1}T_{w_2} = T_{w_1w_2}$ if $\ell(w_1w_2) = \ell(w_1) + \ell(w_2)$}
\]
where $w_1,w_2\in W(1)$.
Let $Z_\kappa = (Z\cap K)/(Z\cap I(1))$.
Then this is a subgroup of $W(1)$.
Since any elements in $Z_\kappa$ has the length $0$ (since it is in the kernel of $W(1)\to W$), from the braid relations, we have $T_{t_1}T_{t_2} = T_{t_1t_2}$ for $t_1,t_2\in Z_\kappa$.
In other words, the embedding $C[Z_\kappa]\hookrightarrow \mathcal{H}$ defined by $\sum_{t\in Z_\kappa}c_{t}t\mapsto \sum_{t\in Z_\kappa}c_{t}T_{t}$ is an algebra homomorphism where $C[Z_\kappa]$ is the group ring of $Z_\kappa$.
Using this embedding, we regard $C[Z_\kappa]$ as a subalgebra of $\mathcal{H}$.

Let $S_\aff(1)$ be the inverse image of $S_\aff$ in $W(1)$.
Then for $s\in S_\aff(1)$, we have
\[
T_s^2 = c_sT_s
\]
for some $c_s\in C[Z_\kappa]$.
An element $c_s$ is given in \cite[4.2]{MR3484112}.

Define $T_w^*$ as in \cite[4.3]{MR3484112} for $w\in W(1)$.
This is also a basis of $\mathcal{H}$ and it satisfies the following: $T_w^* \in T_w + \sum_{v < w}CT_v$ and $T_{w_1}^*T_{w_2}^* = T_{w_1w_2}^*$ if $\ell(w_1w_2) = \ell(w_1) + \ell(w_2)$.

Let $o$ be a spherical orientation~\cite[5.2]{MR3484112}.
Note that the set of spherical orientations are canonially bijective with the set of Weyl chambers.
For each $o$, we have another basis $\{E_o(w)\mid w\in W(1)\}$ defined in \cite[5.3]{MR3484112}.
The orientations correspond to the Weyl chambers.
Let $o_-$ be the orientation corresponding to the anti-dominant Weyl chamber and set $E(w) = E_{o_-}(w)$.

Set $\Lambda(1) = Z/(Z\cap I(1))$.
This is a subgroup of $W(1)$.
For $\lambda_1,\lambda_2\in \Lambda(1)$, the multiplication $E(\lambda_1)E(\lambda_2)$ is simply given.
To say it, we give some notation.
The pair $(G,S)$ gives a root datum $(X^*(S),\Sigma,X_*(S),\Sigma^\vee)$ and since we have fixed a Borel subgroup we also have a positive system $\Sigma^+\subset \Sigma$ and the set of simple roots $\Delta\subset\Sigma^+$.
An element $v\in X_*(S)\otimes_{\Z}\R$ is called dominant if and only if $\langle v,\alpha\rangle\ge 0$ for any $\alpha\in\Sigma^+$.
A $W_0$-orbit of the set of dominant elements is called a closed Weyl chamber.
We also say that $v\in X_*(S)\otimes_{\Z}\R$ is regular if $\langle v,\alpha\rangle\ne 0$ for any $\alpha\in\Sigma$.
We have a homomorphism $\nu\colon Z\to X_*(S)\otimes_{\Z}\R = \Hom_\Z(X^*(S),\R)$ characterized by $\nu(z)(\chi) = -\val(\chi(z))$ where $z\in S$, $\chi\in X^*(S)$ and $\val\colon F^\times\to \Z$ is the normalized valuation.
This homomorphism factors through $Z\to \Lambda(1)$ and the induced homomorphism $\Lambda(1)\to X_*(S)\otimes_{\Z}\R$ is denoted by the same letter $\nu$.
We let $\Lambda^+(1)$ the set of $\lambda\in \Lambda(1)$ such that $\nu(\lambda)$ is dominant.
For $w\in W_0$, let $w(\Lambda^+(1))$ be the set of $\lambda\in \Lambda(1)$ such that $w^{-1}(\nu(\lambda))$ is dominant.

The multiplication $E(\lambda_1)E(\lambda_2)$ is $E(\lambda_1\lambda_2)$ if $\nu(\lambda_1)$ and $\nu(\lambda_2)$ are in the same closed Weyl chamber (in other words, $\lambda_1,\lambda_2\in w(\Lambda^+(1))$ for some $w\in W_0$) and otherwise it is zero.
In particular, $\mathcal{A} = \bigoplus_{\lambda\in \Lambda(1)}CE(\lambda)$ is a subalgebra of $\mathcal{H}$.
If we fix a closed Weyl chamber $\mathcal{C}$, then $\bigoplus_{\nu(\lambda)\in \mathcal{C}}CE(\lambda)$ is a subalgebra of $\mathcal{A}$ and the linear map
\[
\bigoplus_{\nu(\lambda)\in \mathcal{C}}CE(\lambda)\to C[\Lambda(1)]
\]
defined by $E(\lambda)\mapsto \tau_\lambda$ is an algebra embedding.
Here $C[\Lambda(1)]$ is the group ring of $\Lambda(1)$ and we denote the element in $C[\Lambda(1)]$ corresponding to $\lambda\in \Lambda(1)$ by $\tau_\lambda$, namely $C[\Lambda(1)] = \bigoplus_{\lambda\in \Lambda(1)}C\tau_\lambda$.
\begin{rem}\label{rem:inverse of E(lambda)}
\begin{enumerate}
\item If $\langle\nu(\lambda),\alpha\rangle = 0$ for any $\alpha\in\Sigma$, then $\nu(\lambda)$ and $\nu(\lambda^{-1})$ are in the same closed Weyl chamber. (In fact, $\nu(\lambda)$ and $\nu(\lambda^{-1})$ are in any closed Weyl chamber.) Hence $E(\lambda)E(\lambda^{-1}) = 1$.
In particular, $E(\lambda)$ is invertible.
\item If $\lambda\in \Lambda(1)$ is in the center of $\Lambda(1)$, then $E(\lambda)$ is also in the center of $\mathcal{A}$.
This follows from the above description of the multiplication.
\end{enumerate}
\end{rem}

Let $J$ be a subset of $\Delta$ and denote the corresponding standard parabolic subgroup by $P_J$.
Let $L_J$ be the Levi part of $P_J$ containing $Z$.
Then $K\cap L_J$ is a special parahoric subgroup and $I(1)_J = I(1)\cap L_J$ a pro-$p$ Iwahori subgroup.
Attached to these, we have many objects.
For such objects we add a suffix $J$, for example, the pro-$p$ Iwahori-Hecke algebra attached to $(L_J,I(1)_J)$ is denoted by $\mathcal{H}_J$.
There are two exceptions: base $T_w$ and $E(w)$ for $\mathcal{H}_J$ is denoted by $T^J_w$ and $E^J(w)$, respectively.
For each $J\subset \Delta$, we have two subalgebras $\mathcal{H}_J^+$, $\mathcal{H}_J^-$ of $\mathcal{H}_J$ and four algebra homomorphisms $j_J^+,j_J^{+*}\colon \mathcal{H}_J^+\to \mathcal{H}$ and $j_J^-,j_J^{-*}\colon \mathcal{H}_J^-\to \mathcal{H}$.
See \cite[2.8]{arXiv:1612.01312} for the definitions. (Here $\mathcal{H}_J^+$ is denoted by $\mathcal{H}_{P_J}^+$ in \cite{arXiv:1612.01312}, etc.)

\section{The category $\mathcal{C}$ and a proof of the injectivity}\label{sec:C and injectivity}
\subsection{The category $\mathcal{C}$}
The modules in this paper are right modules unless otherwise stated.
In this paper, we focus on the full-sub category $\mathcal{C}$ of the category of $\mathcal{H}$-modules defined using the center $\mathcal{Z}$ of $\mathcal{H}$.
The center $\mathcal{Z}$ is described using the basis $\{E(w)\}$.
Since $\Lambda(1)$ is normal in $W(1)$, the group $W(1)$ acts on $\Lambda(1)$ by the conjugate action.
For $\lambda\in \Lambda(1)$ denote the orbit through $\lambda$ by $\mathcal{O}_\lambda$.
For $\lambda \in \Lambda(1)$, put $z_\lambda = \sum_{\lambda'\in \mathcal{O}_\lambda}E(\lambda')$.
Then $\{z_\lambda\mid z\in \Lambda(1)/W(1)\}$ gives a basis of $\mathcal{Z}$~\cite[Theorem~1.2]{MR3271250}.
Fix a uniformizer $\varpi$ of $F$ and let $\Lambda_S(1)$ be the image of $\{\xi(\varpi)\mid \xi\in X_*(S)\}$.
\begin{defn}\label{defn:C}
An $\mathcal{H}$-module $M$ is in $\mathcal{C}$ if and only if $z_\lambda$ is invertible on $M$ for any $\lambda\in \Lambda_S(1)$.
\end{defn}
\begin{lem}\label{lem:description of z_lambda}
Let $\lambda\in \Lambda_S(1)$.
Then we have the following.
\begin{enumerate}
\item For $w\in W(1)$, $w$ stabilizes $\lambda$ if and only if the image of $w$ in $W_0$ stabilizes $\nu(\lambda)$.
\item Let $\{w_1,\dots,w_r\}\subset W(1)$ be a subset of $W(1)$ such that the image in $W_0$ gives a set of complete representatives of $W_0/\Stab_{W_0}(\nu(\lambda))$.
Then we have $z_\lambda = \sum_{i = 1}^rE(w_i \lambda w_i^{-1})$. (Note that $w_i\lambda w_i^{-1}$ depends only on the image of $w_i$ in $W_0/\Stab_{W_0}(\lambda)$ by (1).)
\end{enumerate}
\end{lem}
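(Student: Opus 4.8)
The plan is as follows. \emph{For part (1)}, I first make $\Lambda_S(1)$ explicit. The homomorphism $\xi\mapsto\xi(\varpi)$, $X_*(S)\to S$, composed with the projection $Z\to\Lambda(1)$, gives a surjection $X_*(S)\to\Lambda_S(1)$; composing it further with $\nu$ sends $\xi\in X_*(S)$ to the element of $\Hom_\Z(X^*(S),\R)=X_*(S)\otimes_\Z\R$ whose value at $\chi\in X^*(S)$ is $-\val(\chi(\xi(\varpi)))=-\langle\xi,\chi\rangle$; that is, to $-\xi$. Hence $X_*(S)\to\Lambda_S(1)$ is injective, so an isomorphism, and $\nu|_{\Lambda_S(1)}$ becomes $\xi\mapsto-\xi$; in particular it is injective. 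Next, for $n\in N_G(S)$ lifting $w\in W(1)$, conjugation by $n$ sends $\xi(\varpi)$ to $({}^{n}\xi)(\varpi)$, where ${}^{n}\xi\in X_*(S)$ is the image of $\xi$ under the automorphism of $S$ induced by $n$; since $Z$ centralizes $S$, this automorphism depends only on the image $\bar w\in W_0$ of $w$, and ${}^{n}\xi=\bar w(\xi)$ for the standard action of $W_0$ on $X_*(S)$. Thus $\Lambda_S(1)$ is stable under the conjugation action of $W(1)$, and under $\Lambda_S(1)\cong X_*(S)$ this action is the standard action of $W_0$. Now if $\lambda\in\Lambda_S(1)$ corresponds to $\xi\in X_*(S)$, then $w\lambda w^{-1}=\lambda$ iff $\bar w(\xi)=\xi$ in $X_*(S)$, iff $\bar w(\xi)=\xi$ in $X_*(S)\otimes_\Z\R$ (the inclusion being injective), iff $\bar w(\nu(\lambda))=\nu(\lambda)$ since $\nu(\lambda)=-\xi$. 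This proves (1).

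\emph{For part (2)}, recall that $z_\lambda=\sum_{\lambda'\in\mathcal{O}_\lambda}E(\lambda')$ by definition, $\mathcal{O}_\lambda$ being the $W(1)$-conjugation orbit of $\lambda$. The orbit--stabilizer correspondence gives a bijection $W(1)/\Stab_{W(1)}(\lambda)\to\mathcal{O}_\lambda$, $w\mapsto w\lambda w^{-1}$. By (1), $\Stab_{W(1)}(\lambda)$ is the preimage under $W(1)\to W_0$ of $\Stab_{W_0}(\nu(\lambda))$, so this bijection descends to a bijection $W_0/\Stab_{W_0}(\nu(\lambda))\to\mathcal{O}_\lambda$ sending the coset of $\bar w$ to $w\lambda w^{-1}$ for any lift $w\in W(1)$ of $\bar w$: indeed, if $\bar w,\bar w'$ lie in the same coset then $\bar w^{-1}\bar w'\in\Stab_{W_0}(\nu(\lambda))$, so $w^{-1}w'$ stabilizes $\lambda$ by (1), whence $w\lambda w^{-1}=w'\lambda w'^{-1}$, and the converse and surjectivity are seen the same way. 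Hence, for any $\{w_1,\dots,w_r\}\subset W(1)$ whose images in $W_0$ represent $W_0/\Stab_{W_0}(\nu(\lambda))$, the elements $w_1\lambda w_1^{-1},\dots,w_r\lambda w_r^{-1}$ are pairwise distinct and exhaust $\mathcal{O}_\lambda$, giving $z_\lambda=\sum_{i=1}^r E(w_i\lambda w_i^{-1})$.

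\emph{Main obstacle.} The only substantive point is part (1): once $\nu$ is known to be injective and $W(1)$-equivariant on $\Lambda_S(1)$, the rest is a formal orbit--stabilizer argument. This is precisely where the hypothesis $\lambda\in\Lambda_S(1)$ is used -- for a general $\lambda\in\Lambda(1)$ the kernel of $\nu$ may be nontrivial, and then $\Stab_{W(1)}(\lambda)$ can be strictly smaller than the preimage of $\Stab_{W_0}(\nu(\lambda))$, so the statement would fail. Beyond this I expect only routine bookkeeping, mainly checking that the identification $\Lambda_S(1)\cong X_*(S)$ carries the conjugation action of $W(1)$ to the standard action of $W_0$ and carries $\nu$ to $\xi\mapsto-\xi$.
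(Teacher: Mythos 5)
Your proof is correct and follows essentially the same route as the paper: write $\lambda$ as (the image of) $\xi(\varpi)^{\pm1}$, observe that conjugation on $\Lambda_S(1)$ corresponds to the standard $W_0$-action on $X_*(S)$ and hence depends only on the image of $w$ in $W_0$, and then deduce (2) by orbit--stabilizer. The extra verification that $X_*(S)\to\Lambda_S(1)$ is injective (via $\nu$) is a harmless addition to what the paper leaves implicit.
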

\begin{proof}
Take $\xi\in X_*(S)$ such that $\lambda = \xi(\varpi)^{-1}$.
We have $\nu(\lambda) = \xi$.
Let $w\in W(1)$ and denote the image of $w$ in $W_0$ by $w_0$.
Then we have $w\lambda w^{-1} = (w_0\xi)(a)^{-1}$.
Hence if $w_0$ stabilizes $\xi = \nu(\lambda)$, then $w$ stabilizes $\lambda$.
Obviously if $w$ stabilizes $\lambda$ then $w_0$ stabilizes $\nu(\lambda)$.

By (1), $\Stab_{W(1)}(\lambda)$ is the inverse image of $\Stab_{W_0}(\lambda)$. Therefore we have $W(1)/\Stab_{W(1)}(\lambda)\simeq W_0/\Stab_{W_0}(\lambda)$.
By the definition, we have $z_\lambda = \sum_{w\in W(1)/\Stab_{W(1)}(\lambda)}E(w\lambda w^{-1})$.
Hence we get (2).

\end{proof}\begin{lem}\label{lem:multiplication in the center (easy)}
Let $\lambda,\mu\in \Lambda_S(1)$ and assume that $\nu(\lambda)$ and $\nu(\mu)$ are in the same closed Weyl chamber.
We also assume that $\nu(\lambda)$ is regular.
Then we have $z_\lambda z_\mu = z_{\lambda \mu}$.
\end{lem}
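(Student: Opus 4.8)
The plan is to prove the identity by a direct computation in the basis $\{E(w)\mid w\in W(1)\}$, using Lemma~\ref{lem:description of z_lambda} together with the multiplication rule recalled above: $E(\nu_1)E(\nu_2)=E(\nu_1\nu_2)$ when $\nu(\nu_1)$ and $\nu(\nu_2)$ lie in a common closed Weyl chamber, and $E(\nu_1)E(\nu_2)=0$ otherwise.

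First I would rewrite all three elements $z_\lambda$, $z_\mu$, $z_{\lambda\mu}$ explicitly. Since $\nu(\lambda)$ is regular, $\Stab_{W_0}(\nu(\lambda))$ is trivial, so Lemma~\ref{lem:description of z_lambda} gives $z_\lambda=\sum_{u\in W_0}E(\widetilde u\lambda\widetilde u^{-1})$, where $\widetilde u\in W(1)$ is any lift of $u$; the summand is independent of the lift because $S$ is central in $Z$, so conjugation of an element of $\Lambda_S(1)$ by $W(1)$ factors through $W_0$. As $\nu$ is a homomorphism, $\nu(\lambda\mu)=\nu(\lambda)+\nu(\mu)$, which is again regular and lies in the same closed Weyl chamber as $\nu(\lambda)$ (the sum of a regular element and another element of the same closed chamber is regular and stays in that chamber), so likewise $z_{\lambda\mu}=\sum_{u\in W_0}E(\widetilde u(\lambda\mu)\widetilde u^{-1})$. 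Finally fix lifts $w_1,\dots,w_r\in W(1)$ of a complete set of representatives of $W_0/\Stab_{W_0}(\nu(\mu))$, so that $z_\mu=\sum_i E(w_i\mu w_i^{-1})$.

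Next I would expand $z_\lambda z_\mu=\sum_{u\in W_0}\sum_i E(\widetilde u\lambda\widetilde u^{-1})E(w_i\mu w_i^{-1})$ and determine which terms survive. Since $\nu(\widetilde u\lambda\widetilde u^{-1})=u\,\nu(\lambda)$ is regular, the $(u,i)$-term is nonzero exactly when $\overline{w}_i\,\nu(\mu)$ lies in the closed Weyl chamber containing $u\,\nu(\lambda)$; unwinding this with the standard fact that an element of $W_0$ carrying a dominant vector to a dominant vector must fix it, the condition becomes $\overline{w}_i\in u\Stab_{W_0}(\nu(\mu))$. Hence for each $u\in W_0$ exactly one index $i=i(u)$ contributes. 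Writing $\overline{w}_{i(u)}=us$ with $s\in\Stab_{W_0}(\nu(\mu))$, and using Lemma~\ref{lem:description of z_lambda}(1) (a lift of $s$ fixes $\mu$) together with the fact that conjugation on $\Lambda_S(1)$ factors through $W_0$, one gets $w_{i(u)}\mu w_{i(u)}^{-1}=\widetilde u\mu\widetilde u^{-1}$. Since $\Lambda_S(1)$ is commutative, the surviving term equals $E\bigl((\widetilde u\lambda\widetilde u^{-1})(\widetilde u\mu\widetilde u^{-1})\bigr)=E(\widetilde u(\lambda\mu)\widetilde u^{-1})$, and summing over $u\in W_0$ yields $z_\lambda z_\mu=\sum_{u\in W_0}E(\widetilde u(\lambda\mu)\widetilde u^{-1})=z_{\lambda\mu}$.

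The main obstacle is the bookkeeping in the third step: pinning down precisely the surviving index pairs in the double sum (one $i(u)$ per $u$) and checking that each survivor recombines as $E(\widetilde u(\lambda\mu)\widetilde u^{-1})$. The remaining ingredients — regularity of $\nu(\lambda\mu)$, and independence of all the displayed summands from the chosen lifts (using that $S$ is central in $Z$ and that $\Lambda_S(1)$ is commutative) — are routine once made explicit.
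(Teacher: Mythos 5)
Your argument is correct and follows essentially the same route as the paper: expand $z_\lambda$, $z_\mu$, $z_{\lambda\mu}$ in the $E$-basis via Lemma~\ref{lem:description of z_lambda}, use regularity of $\nu(\lambda)$ to show each term of $z_\lambda$ pairs with exactly one term of $z_\mu$, identify the surviving products as $E(\widetilde u\lambda\mu\widetilde u^{-1})$, and recognize the sum as $z_{\lambda\mu}$. The only cosmetic difference is that you make explicit the lift-independence and the regularity of $\nu(\lambda\mu)$, which the paper leaves implicit.
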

\begin{proof}
Take $w_1,\dots,w_r\in W(1)$ such that the images of them in $W_0$ gives a set of complete representatives of $W_0/\Stab_{W_0}(\nu(\mu))$.
Then we have $z_\mu = \sum_i E(w_i\mu w_i^{-1})$ by the above lemma.
Let $v_1,\dots,v_s$ be a set of complete representatives of $W_0 = W(1)/\Lambda(1)$.
Then we have $z_\lambda = \sum_j E(v_i\lambda v_i^{-1})$. (Note that $\nu(\lambda)$ is assumed to be regular.)
Since $\nu(\lambda)$ is regular, for each $i$, there exists only one $j_i = 1,\dots,r$ such that $v_i(\nu(\lambda))$ and $w_{j_i}(\nu(\mu))$ is in the same closed Weyl chamber.
Hence we get
\[
E(v_i\lambda v_i^{-1})E(w_j\mu w_j^{-1}) = 
\begin{cases}
0 & (j\ne j_i),\\
E(v_i\lambda v_i^{-1}w_j\mu w_j^{-1}) & (j = j_i).
\end{cases}
\]
Moreover, $\nu(\lambda)$ and $v_i^{-1}w_{j_i}(\nu(\mu))$ is in the same closed Weyl chamber.
Since $\nu(\lambda)$ and $\nu(\mu)$ are in the same closed Weyl chamber by the assumption, we get $v_i^{-1}w_{j_i}(\nu(\mu)) = \nu(\mu)$.
Therefore $v_i^{-1}w_{j_i}$ stabilizes $\nu(\mu)$.
As in the previous lemma, $v_i^{-1}w_{j_i}$ also stabilizes $\mu$.
Hence $w_{j_i}\mu w_{j_i}^{-1} = v_i\mu v_i^{-1}$.
We get
\[
E(v_i\lambda v_i^{-1})E(w_j\mu w_j^{-1}) = 
\begin{cases}
0 & (j\ne j_i),\\
E(v_i\lambda\mu v_i^{-1}) & (j = j_i).
\end{cases}
\]
Now we get
\[
z_\lambda z_\mu = \sum_i\sum_j E(v_i\lambda v_i^{-1})E(w_j\mu w_j^{-1}) = \sum_i E(v_i\lambda \mu v_i^{-1}).
\]
By the assumption, $\nu(\lambda\mu)$ is regular and $\lambda\mu\in\Lambda_S(1)$.
Hence the last term is $z_{\lambda\mu}$ by the above lemma.
\end{proof}
\begin{lem}
An $\mathcal{H}$-module $M$ is in $\mathcal{C}$ if and only if for some $\lambda\in \Lambda_S(1)$ such that $\nu(\lambda)$ is regular, the element $z_\lambda$ is invertible on $M$.
\end{lem}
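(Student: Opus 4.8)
The plan is to handle the two implications separately. If $M$ is in $\mathcal{C}$, then by definition $z_\lambda$ is invertible on $M$ for every $\lambda\in\Lambda_S(1)$; choosing a regular $\xi\in X_*(S)$ and setting $\lambda=\xi(\varpi)^{-1}$, so that $\nu(\lambda)=\xi$ is regular, exhibits one $\lambda$ of the required kind, and this direction is done. Conversely, suppose $z_\lambda$ is invertible on $M$ for a single $\lambda\in\Lambda_S(1)$ with $\nu(\lambda)$ regular; the goal is to show $z_\mu$ is invertible on $M$ for every $\mu\in\Lambda_S(1)$, which is exactly the definition of $M\in\mathcal{C}$.

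First I would record two preliminary facts. (i) For $w\in W(1)$ one has $z_{w\lambda w^{-1}}=z_\lambda$, since $z_\lambda$ depends only on the $W(1)$-conjugacy orbit of $\lambda$; moreover if $\lambda\in\Lambda_S(1)$ then $w\lambda w^{-1}\in\Lambda_S(1)$ and $\nu(w\lambda w^{-1})=\bar w\cdot\nu(\lambda)$, where $\bar w$ is the image of $w$ in $W_0$ (as in the proof of Lemma~\ref{lem:description of z_lambda}), so $\nu(w\lambda w^{-1})$ is again regular. As $\nu(\lambda)$ is regular, its $W_0$-orbit meets the interior of every closed Weyl chamber; hence for each closed Weyl chamber $\mathcal{D}$ there is a $W(1)$-conjugate $\lambda_{\mathcal{D}}\in\Lambda_S(1)$ of $\lambda$ with $\nu(\lambda_{\mathcal{D}})$ in the interior of $\mathcal{D}$ and with $z_{\lambda_{\mathcal{D}}}=z_\lambda$ invertible on $M$. (ii) Applying Lemma~\ref{lem:multiplication in the center (easy)} repeatedly --- first to the pair $(\lambda_{\mathcal{D}},\lambda_{\mathcal{D}})$, then to $(\lambda_{\mathcal{D}},\lambda_{\mathcal{D}}^{\,2})$, and so on, all of whose $\nu$-images are regular and lie in $\mathcal{D}$ --- gives $z_{\lambda_{\mathcal{D}}^{\,n}}=z_{\lambda_{\mathcal{D}}}^{\,n}=z_\lambda^{\,n}$ for all $n\ge 1$, so each of these powers is invertible on $M$.

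Now fix $\mu\in\Lambda_S(1)$ and choose a closed Weyl chamber $\mathcal{D}$ with $\nu(\mu)\in\mathcal{D}$. Set $\mu'=\lambda_{\mathcal{D}}^{\,n}\mu^{-1}\in\Lambda_S(1)$. Since $\nu(\lambda_{\mathcal{D}})$ lies in the interior of the cone $\mathcal{D}$ and $\nu(\mu)\in\mathcal{D}$, for $n$ large enough $\nu(\mu')=n\,\nu(\lambda_{\mathcal{D}})-\nu(\mu)$ also lies in the interior of $\mathcal{D}$; in particular $\nu(\mu')$ is regular and $\nu(\mu),\nu(\mu')$ lie in the same closed Weyl chamber. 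Then by Lemma~\ref{lem:multiplication in the center (easy)}, using that $\Lambda_S(1)$ is abelian and $\mathcal{Z}$ commutative,
\[
z_\mu z_{\mu'}=z_{\mu'}z_\mu=z_{\mu'\mu}=z_{\lambda_{\mathcal{D}}^{\,n}}=z_\lambda^{\,n},
\]
which is invertible on $M$ by (ii). Finally, $z_\mu$ and $z_{\mu'}$ are central, hence commute with one another and with $(z_\lambda^{\,n})^{-1}$ on $M$, so $z_{\mu'}\,(z_\lambda^{\,n})^{-1}$ is a two-sided inverse of $z_\mu$ on $M$; thus $z_\mu$ is invertible on $M$. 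Since $\mu$ was arbitrary, $M\in\mathcal{C}$.

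The only step requiring more than bookkeeping is the elementary convex-geometry claim that $n\,\nu(\lambda_{\mathcal{D}})-\nu(\mu)$ lands in the interior of $\mathcal{D}$ once $n$ is large --- proved by writing it as $n\bigl(\nu(\lambda_{\mathcal{D}})-n^{-1}\nu(\mu)\bigr)$ and using that the interior of the closed cone $\mathcal{D}$ is open and stable under positive scaling --- together with the routine check that conjugation and the products appearing above do not leave $\Lambda_S(1)$. Everything else reduces directly to Lemma~\ref{lem:multiplication in the center (easy)} and to the commutativity of $\mathcal{Z}$.
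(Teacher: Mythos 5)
Your proof is correct and follows essentially the same route as the paper: both directions reduce to Lemma~\ref{lem:multiplication in the center (easy)} by producing a complementary element ($\mu'=\lambda_{\mathcal{D}}^{\,n}\mu^{-1}$, resp.\ $\lambda_0^n\lambda^{-1}$ in the paper) whose product with the given element is a power of the invertible central element $z_\lambda$. The only difference is cosmetic: you conjugate the fixed regular element into the chamber of $\nu(\mu)$, whereas the paper conjugates the general element into the chamber of the fixed regular one.
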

\begin{proof}
Assume that there exists $\lambda_0\in \Lambda_S(1)$ such that $\nu(\lambda_0)$ is regular and $z_{\lambda_0}$ is invertible on $M$.
Let $\lambda\in \Lambda_S(1)$ and we prove that $\lambda$ is also invertible on $M$.
Replacing $\lambda$ with an element in the orbit through $\lambda$, we may assume that $\nu(\lambda)$ and $\nu(\lambda_0)$ are in the same closed Weyl chamber.
Take a sufficiently large $n\in \Z_{>0}$ such that $\nu(\lambda_0^n\lambda^{-1})$ is also in the same closed Weyl chamber as $\nu(\lambda_0)$.
Set $\mu = \lambda_0^n\lambda^{-1}$.
Then by the above lemma, we have $z_\mu z_\lambda = z_{\lambda_0^n} = z_{\lambda_0}^n$.
By the assumption, $z_{\lambda_0}^n$ is invertible on $M$.
Hence $z_\lambda$ is invertible, namely we have $M\in \mathcal{C}$.
\end{proof}

\subsection{Theorem}
In the rest of this section, we prove the following 
\begin{thm}\label{thm:injective}
If $M\in \mathcal{C}$, then $M\to M\otimes_{\mathcal{H}}\cInd_{I(1)}^G\trivrep$ is injective.
\end{thm}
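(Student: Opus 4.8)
The plan is to follow the chain of reductions outlined in the introduction and then appeal to the structure of the representations of $G$ induced from the special parahoric subgroup $K$. First I would observe that it suffices to prove the statement for one $\mathcal{H}$-module $N\in\mathcal{C}$ into which $M$ embeds $\mathcal{H}$-linearly: given such $M\hookrightarrow N$, the square relating $M\to M\otimes_{\mathcal{H}}\cInd_{I(1)}^G\trivrep$ to $N\to N\otimes_{\mathcal{H}}\cInd_{I(1)}^G\trivrep$ commutes, so the injectivity of $N\to N\otimes_{\mathcal{H}}\cInd_{I(1)}^G\trivrep$ together with that of $M\hookrightarrow N$ forces the injectivity of $M\to M\otimes_{\mathcal{H}}\cInd_{I(1)}^G\trivrep$. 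I would take $N=\Hom_{\mathcal{A}}(\mathcal{H},M)$, the homomorphisms of right $\mathcal{A}$-modules equipped with the right $\mathcal{H}$-action $(f\cdot h)(h')=f(hh')$; the map $m\mapsto(h\mapsto mh)$ is $\mathcal{H}$-linear and injective, and since each $z_\lambda$ with $\lambda\in\Lambda_S(1)$ lies in $\mathcal{A}$ and is central in $\mathcal{H}$, its action on $N$ is postcomposition with its action on $M$, hence invertible; thus $N\in\mathcal{C}$.

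Next I would decompose $N|_{\mathcal{A}}$ along its support (Definition~\ref{defn:support}) and, by additivity of all functors involved, reduce to the case that the support of $N$ is contained in a single closed Weyl chamber. Using the parabolic induction homomorphisms $j_J^{\pm}$, $j_J^{\pm*}$ of \cite{arXiv:1612.01312}, the compatibility results of \cite{arXiv:1406.1003_accepted}, and the theorem of Ollivier-Vign\'eras in \cite{arXiv:1703.04921}, I would transfer the problem to the appropriate Levi subgroup so that the support becomes dominant; thus I may assume the support of $N$ is the dominant closed Weyl chamber. In that case $\Hom_{\mathcal{A}}(\mathcal{H},M)$ can be rewritten as $M'\otimes_{\mathcal{A}}\mathcal{H}$ for a suitable $\mathcal{A}$-module $M'$ (using that the dominant-chamber part of $\mathcal{A}$ embeds into $C[\Lambda(1)]$ and that $\mathcal{H}$ is free of finite rank over $\mathcal{A}$), and it remains to show that $M'\otimes_{\mathcal{A}}\mathcal{H}\to M'\otimes_{\mathcal{A}}\cInd_{I(1)}^G\trivrep$ is injective.

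Finally, by \cite{MR3666048} both $M'\otimes_{\mathcal{A}}\mathcal{H}$ and $M'\otimes_{\mathcal{A}}\cInd_{I(1)}^G\trivrep$ can be described in terms of $\cInd_K^GV$, where $K$ is the special parahoric subgroup fixed above and $V$ an explicit representation of $K$; I would then invoke the analysis of the internal structure of $\cInd_K^GV$ carried out in \cite{MR3600042} to conclude. The hard part will be precisely this last step: it is the only place where the hypothesis $M\in\mathcal{C}$ — invertibility of the central elements $z_\lambda$, equivalently (by the lemmas of the previous subsection) invertibility of $z_\lambda$ for a single regular $\lambda$ — is genuinely used, in combination with the rather delicate description of $\cInd_K^GV$ from \cite{MR3600042}. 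The reduction to the dominant chamber in the previous step is also a nontrivial point, since it relies on the compatibility of the support decomposition with the parabolic induction functors.
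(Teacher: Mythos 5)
Your reduction steps track the paper's own strategy (embed $M$ into $\Hom_{\mathcal{A}}(\mathcal{H},M)$, decompose along the support, move the support to the dominant chamber via parabolic induction and the results of Ollivier--Vign\'eras, rewrite the Hom as a tensor product), and up to small imprecisions these steps can be carried out essentially as you indicate. Two of the imprecisions are worth flagging: the support decomposition of Lemma~\ref{lem:decomposition along support} is applied to $M$ itself (an $\mathcal{A}$-module decomposition of the \emph{target} of the Hom, which then induces an $\mathcal{H}$-module decomposition of $\Hom_{\mathcal{A}}(\mathcal{H},M)$), not to $N|_{\mathcal{A}}$ --- decomposing $N$ as an $\mathcal{A}$-module would not produce $\mathcal{H}$-submodules, so the functor $(-)\otimes_{\mathcal{H}}\cInd_{I(1)}^G\trivrep$ could not be applied summand by summand; and the hypothesis $M\in\mathcal{C}$ is genuinely consumed precisely there (invertibility of $z_{\lambda_0}$ gives the decomposition), not in the final step, which only uses the residual support condition $\supp=\Lambda^+(1)$. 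Also, the rewriting $\Hom_{\mathcal{A}}(\mathcal{H},n_{w_\Delta}M)\simeq M\otimes_{\mathcal{A}}\mathcal{H}$ is not a consequence of finite freeness of $\mathcal{H}$ over $\mathcal{A}$ alone; it needs the twisted duality statements for $\mathcal{H}_\emptyset^{\pm}$.

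The genuine gap is the last step, which you explicitly leave as ``invoke the analysis of $\cInd_K^GV$.'' That step is the actual content of the theorem and cannot be dispatched by citation: one must prove Lemma~\ref{lem:injective lemma, final form}, i.e.\ that $M\otimes_{\mathcal{A}}\mathcal{H}\to M\otimes_{\mathcal{A}}\cInd_{I(1)}^G\trivrep$ is injective for an $\mathcal{A}$-module $M$ with $\supp M=\Lambda^+(1)$. Before that one still needs the decomposition along $\Lambda(1)$-orbits in $\widehat{Z}_\kappa$ and the reduction, by induction on $\dim G$, to the case where $Z_\kappa\cap\Lambda'_\alpha(1)$ acts trivially for all $\alpha\in\Delta$. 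Then the proof requires new constructions: the universal modules $X_J\subset X_\emptyset$, the representations $\pi_J\subset\pi_\emptyset\simeq\Ind_{\overline{B}}^GC[\Lambda(1)]_\omega$ with $\pi_J^{I(1)}\simeq X_J$, the commutative square comparing $X_J\to X_\emptyset$ with $\pi_J\to\pi_\emptyset$, and a filtration indexed by Bruhat-open subsets $A\subset W_0$ whose graded pieces are identified as $c_w$ times free $C[\Lambda(1)]_\omega$-modules. The crucial point you do not address is that knowing $X_\Delta\hookrightarrow\pi_\Delta$ is not enough: the map you must show injective is obtained by applying $M\otimes_{C[\Lambda(1)]_\omega}(-)$, which is not exact, so one needs the splitting of the filtration steps and the existence of a $C[\Lambda(1)]_\omega$-linear section of $X_{\Delta,A}/X_{\Delta,A'}\hookrightarrow\pi_{\Delta,A}/\pi_{\Delta,A'}$ (Lemmas~\ref{lem:splits, from the filtration} and~\ref{lem:X to pi has a section, on quotient}) before the induction on $\#A$ can run. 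Without an argument supplying these ingredients, the proposal is a plan rather than a proof.
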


\subsection{Reductions}
Define a subalgebra $\mathcal{A}$ of $\mathcal{H}$ by $\mathcal{A} = \bigoplus_{\lambda\in \Lambda(1)}CE(\lambda)$.
Let $M\in \mathcal{C}$ and set $M' = \Hom_\mathcal{A}(\mathcal{H},M)$.
Defining the action of $X\in \mathcal{H}$ on $M'$ by $(\varphi X)(Y) = \varphi(XY)$ for $\varphi\in\Hom_{\mathcal{A}}(\mathcal{H},M)$ and $Y\in \mathcal{H}$, $M'$ is a right $\mathcal{H}$-module.
The map $m\mapsto (X\mapsto mX)$ gives an $\mathcal{H}$-module embedding $M\hookrightarrow M'$ and we have the following commutative diagram:
\[
\begin{tikzcd}
M \arrow[d,hookrightarrow]\arrow[r] & M\otimes_{\mathcal{H}}\cInd_{I(1)}^G\trivrep \arrow[d]\\
M'\arrow[r] & M'\otimes_{\mathcal{H}}\cInd_{I(1)}^G\trivrep.
\end{tikzcd}
\]
Therefore, to prove Theorem~\ref{thm:injective}, it is sufficient to prove that the map $M'\to M'\otimes_{\mathcal{H}}\cInd_{I(1)}^G\trivrep$ is injective.

\begin{lem}\label{lem:decomposition along support}
Any module $M \in \mathcal{C}$ has a functorial decomposition $M = \bigoplus_{w\in W_0}M_w$ as an $\mathcal{A}$-module such that $E(\mu)$ acts on $M_w$ by
\begin{itemize}
\item zero if $w^{-1}\nu(\mu)$ is not dominant.
\item invertible if $w^{-1}\nu(\mu)$ is dominant.
\end{itemize}
\end{lem}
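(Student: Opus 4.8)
\emph{Proof plan.} The plan is to manufacture, out of the invertibility built into Definition~\ref{defn:C}, a complete system of orthogonal idempotents in $\End_C(M)$ indexed by $W_0$. First I would pick $\xi\in X_*(S)$ regular and dominant and set $\lambda=\xi(\varpi)^{-1}\in\Lambda_S(1)$, so that $\nu(\lambda)=\xi$ is regular and dominant as in the proof of Lemma~\ref{lem:description of z_lambda}; by Definition~\ref{defn:C} the central element $z_\lambda$ acts invertibly on $M$. For $w\in W_0$ I choose a lift $\widetilde w\in W(1)$ and put $\mu_w=\widetilde w\lambda\widetilde w^{-1}$; this depends only on $w$ because $\nu(\lambda)$ is regular, and $\nu(\mu_w)=w(\nu(\lambda))$ lies in the interior of the Weyl chamber $D_w:=\{v\in X_*(S)\otimes_{\Z}\R\mid w^{-1}(v)\ \text{dominant}\}$. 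By Lemma~\ref{lem:description of z_lambda}(2) this gives $z_\lambda=\sum_{w\in W_0}E(\mu_w)$. From the multiplication rule for the $E$-basis (a product $E(\alpha)E(\beta)$ equals $E(\alpha\beta)$ if $\nu(\alpha),\nu(\beta)$ lie in a common closed chamber and is $0$ otherwise) together with the fact that a regular vector lies in a unique closed chamber, one reads off the two relations that drive everything: $E(\mu_w)E(\mu_{w'})=0$ for $w\ne w'$, and $E(\mu_w)z_\lambda=z_\lambda E(\mu_w)=E(\mu_w)^2$.

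Writing $R_X\in\End_C(M)$ for the operator $m\mapsto mX$ attached to $X\in\mathcal{H}$ (so $R_XR_Y=R_{YX}$), centrality of $z_\lambda$ makes $R_{z_\lambda}^{-1}$ commute with every $R_X$, so I set $f_w:=R_{z_\lambda}^{-1}R_{E(\mu_w)}$. The two relations above yield $\sum_{w}f_w=\mathrm{id}$, $f_w^2=f_w$ and $f_wf_{w'}=0$ for $w\ne w'$; hence $M=\bigoplus_{w\in W_0}M_w$ with $M_w:=f_w(M)$. Functoriality is automatic: any morphism $M\to N$ in $\mathcal{C}$ intertwines the $R_{E(\mu_w)}$ and the invertible $R_{z_\lambda}$, hence intertwines the $f_w$, and so carries $M_w$ into $N_w$.

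It remains to check that each $M_w$ is an $\mathcal{A}$-submodule exhibiting the stated behaviour. If $w^{-1}(\nu(\mu))$ is not dominant, i.e.\ $\nu(\mu)\notin D_w$, then $E(\mu_w)E(\mu)=0$, so on $M_w$ one has $R_{E(\mu)}=R_{E(\mu)}f_w=R_{z_\lambda}^{-1}R_{E(\mu_w)E(\mu)}=0$; thus $E(\mu)$ kills $M_w$. Taking $\mu=\mu_{w'}$ for $w'\ne w$ shows $z_\lambda$ acts on $M_w$ by $E(\mu_w)$, which is therefore invertible on $M_w$ (as $z_\lambda$ is central, it preserves $M_w$ and is invertible there). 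If $w^{-1}(\nu(\mu))$ is dominant, then for $w'\ne w$ the triple product $E(\mu_w)E(\mu)E(\mu_{w'})$ vanishes — its first factor equals $E(\mu_w\mu)$ with $\nu(\mu_w\mu)$ in the interior of $D_w$, incompatible with the interior of $D_{w'}$ — and this forces $f_{w'}(mE(\mu))=0$ for $m\in M_w$, so $mE(\mu)\in M_w$ and $M_w$ is $\mathcal{A}$-stable. Finally, choosing $n$ large enough that $\nu(\mu_w^n\mu^{-1})$ and $\nu(\mu^{-1}\mu_w^n)$ both lie in $D_w$, the identities $E(\mu_w^n\mu^{-1})E(\mu)=E(\mu)E(\mu^{-1}\mu_w^n)=E(\mu_w)^n$, combined with the invertibility of $E(\mu_w)$ on $M_w$, force $R_{E(\mu)}|_{M_w}$ to be both surjective and injective; hence $E(\mu)$ is invertible on $M_w$.

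The only genuine subtlety is that $\mathcal{A}$ is noncommutative in general and the elements $E(\mu_w)$ are not idempotents, so the idempotents $f_w$ must be produced by dividing by the central element $z_\lambda$, and the $\mathcal{A}$-stability of $M_w$ has to be verified directly from the chamber/support rule rather than by a commutation argument. Likewise the invertibility claim cannot be obtained from a single two-sided inverse of $E(\mu)$, since $-\nu(\mu)$ lies in a different chamber, and requires the two one-sided identities above. Everything else is routine bookkeeping with the multiplication rule for the $E$-basis and the decomposition of $X_*(S)\otimes_{\Z}\R$ into Weyl chambers.
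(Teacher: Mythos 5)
Your proposal is correct and follows essentially the paper's own route: your subspaces $f_w(M)$ coincide with the paper's $M_w = ME(\lambda_w)$ for $\lambda_w = n_w\lambda_0 n_w^{-1}$ with $\lambda_0\in\Lambda_S(1)$ regular dominant, and the key ingredients — $z_{\lambda_0}=\sum_{w}E(\lambda_w)$, the orthogonality $E(\lambda_v)E(\lambda_w)=0$ for $v\ne w$, the fact that $z_{\lambda_0}$ acts on $M_w$ as $E(\lambda_w)$, and the large-power identity $E(\lambda_w^n)=E(\lambda_w^n\mu^{-1})E(\mu)$ for invertibility — are exactly those of the paper, merely repackaged through the idempotents $R_{z_{\lambda_0}}^{-1}R_{E(\lambda_w)}$. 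The only cosmetic difference is that the paper obtains $\mathcal{A}$-stability of $M_w$ immediately from the centrality of $E(\lambda_w)$ in $\mathcal{A}$ (Remark~\ref{rem:inverse of E(lambda)}(2), since $\lambda_w\in\Lambda_S(1)$ is central in $\Lambda(1)$), whereas you verify it by a direct chamber computation; both are valid.
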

\begin{proof}
Fix $\lambda_0\in \Lambda_S(1)$ such that $\nu(\lambda_0)$ is regular dominant.
Put $\lambda_w = n_w\lambda_0n_w^{-1}$ and set $M_w = ME(\lambda_w)$.
Since $\lambda_w\in \Lambda_S(1)$ is central, $E(\lambda_w)$ is also central in $\mathcal{A}$.
Hence $M_w$ is an $\mathcal{A}$-submodule.

We prove that $\lambda_w$ is invertible on $M_w$.
Since $\nu(\lambda_0)$ is regular, $\nu(\lambda_v)$ and $\nu(\lambda_w)$ are not in the same closed Weyl chamber if $v\ne w$.
Therefore $E(\lambda_v)E(\lambda_w) = 0$.
Hence $M_wE(\lambda_v) = 0$ if $v\ne w$.
Therefore for $m\in M_w$, we have $mz_{\lambda_0} = \sum_{v\in W_0}mE(\lambda_v) = mE(\lambda_w)$.
Hence if $mE(\lambda_w) = 0$ then $mz_{\lambda_0} = 0$, hence $m = 0$ since $z_{\lambda_0}$ is invertible.
Therefore $E(\lambda_w)$ is injective on $M_w$.
We also have that $mz_{\lambda_0}^2 = mE(\lambda_w)z_{\lambda_0} = mz_{\lambda_0}E(\lambda_w) = mE(\lambda_w)^2$ since $z_{\lambda_0}$ commuts with $E(\lambda_w)$. (Recall that $z_{\lambda_0}$ is in the center of $\mathcal{H}$.)
Hence $m = m_0E(\lambda_w)$ where $m_0 = mz_{\lambda_0}^{-2}E(\lambda_w)\in M_w$.
Therefore $E(\lambda_w)$ is surjective on $M_w$.

For $\mu \in \Lambda(1)$ such that $w^{-1}(\nu(\mu))$ is not dominant, $\nu(\mu)$ and $\nu(\lambda_w)$ are not in the same closed Weyl chamber.
Hence $E(\mu)E(\lambda_w) = 0$.
Therefore $E(\mu) = 0$ on $M_w$.
On the other hand, assume that $w^{-1}(\nu(\mu))$ is dominant.
Then $\nu(\mu)$ and $\nu(\lambda_w)$ is in the same closed Weyl chamber.
Take sufficiently large $n\in \Z_{\ge 0}$ such that $\nu(\lambda_w^n\mu^{-1})$ is also in the same closed Weyl chamber as $\nu(\mu)$.
Then we have $E(\lambda_w)^n = E(\lambda_w^n) = E(\lambda_w^n\mu^{-1})E(\mu)$.
Since $E(\lambda_w)$ is invertible on $M_w$, $E(\mu)$ is also invertible on $M_w$.

We prove $M = \bigoplus_{w\in W_0}M_w$.
Since $z_{\lambda_0}$ is invertible, any element in $M$ can be written $mz_{\lambda_0}$ for some $m\in M$.
We have $mz_{\lambda_0} = \sum_{w\in W_0}mE(\lambda_w)\in \sum_{w\in W_0}M_w$.
Hence $M = \sum_{w\in W_0}M_w$.
Let $m_w\in M_w$ and assume that $\sum_{w\in W_0}m_w = 0$.
Then for each $v\in W_0$ we have $\sum_{w\in W_0}m_wE(\lambda_v) = 0$.
Since $m_wE(\lambda_v) = 0$ for $v\ne w$, we have $m_vE(\lambda_v) = 0$.
Since the action of $E(\lambda_v)$ on $M_v$ is invertible, $m_v = 0$.
\end{proof}

Since $\Hom_{\mathcal{A}}(\mathcal{H},M) = \bigoplus_{w\in W_0}\Hom_{\mathcal{A}}(\mathcal{H},M_w)$, to prove $M'\to M'\otimes_{\mathcal{H}}\cInd_{I(1)}^G\trivrep$ is injective, it is sufficient to prove that the homomorphism $\Hom_{\mathcal{A}}(\mathcal{H},M_w)\to \Hom_{\mathcal{A}}(\mathcal{H},M_w)\otimes_{\mathcal{H}}\cInd_{I(1)}^G\trivrep$ is injective.
\begin{defn}\label{defn:support}
We say that $\supp M = w(\Lambda^+(1))$ if and only if $E(\lambda)$ is
\begin{itemize}
\item zero if $w^{-1}(\nu(\mu))$ is not dominant.
\item invertible if $w^{-1}(\nu(\mu))$ is dominant.
\end{itemize}
for any $\lambda\in \Lambda(1)$.
\end{defn}

From the above discussions, to prove Theorem~\ref{thm:injective}, it is sufficient to prove the following lemma.
\begin{lem}\label{lem:injective, supported on a chamber}
Let $M$ be an $\mathcal{A}$-module such that $\supp M = w(\Lambda^+(1))$ where $w\in W_0$.
Then $\Hom_{\mathcal{A}}(\mathcal{H},M)\to \Hom_{\mathcal{A}}(\mathcal{H},M)\otimes_{\mathcal{H}}\cInd_{I(1)}^G\trivrep$ is injective.
\end{lem}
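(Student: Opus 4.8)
The plan is to carry out the remaining reductions sketched in the introduction and then reduce the assertion to the known structure of a compactly induced representation of a special parahoric subgroup. \emph{Reduction to the dominant chamber.} First I would pass from an arbitrary chamber to the dominant one, i.e.\ reduce to $\supp M = \Lambda^{+}(1)$. Given $M$ with $\supp M = w(\Lambda^{+}(1))$ one moves $w(\Lambda^{+}(1))$ to $\Lambda^{+}(1)$ through adjacent chambers; on the algebra side this is the change--of--orientation passage between the bases $E_{o}$, and on the representation side it is realized by parabolic induction from the standard Levi subgroups $L_{J}$. Invoking the compatibility of the maps $j_{J}^{+},j_{J}^{-},j_{J}^{+*},j_{J}^{-*}$ and parabolic induction with $\cInd_{I(1)}^{G}\trivrep$ proved in \cite{arXiv:1406.1003_accepted} and the results of Ollivier--Vign\'eras \cite{arXiv:1703.04921}, one checks that these operations transport the map in question to the analogous map for a module with dominant support and that they preserve injectivity. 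So I may assume $\supp M = \Lambda^{+}(1)$.

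\emph{From $\Hom$ to $\otimes$.} The algebra $\mathcal{H}$ is free of rank $|W_{0}|$ both as a left and as a right $\mathcal{A}$-module, and it behaves as a Frobenius extension of $\mathcal{A}$: for a suitable automorphism $\iota$ of $\mathcal{A}$ one has $\Hom_{\mathcal{A}}(\mathcal{H},\mathcal{A})\cong\mathcal{H}$ as $(\mathcal{A},\mathcal{H})$-bimodules after twisting the left $\mathcal{A}$-action by $\iota$. Consequently $\Hom_{\mathcal{A}}(\mathcal{H},M)\cong M'\otimes_{\mathcal{A}}\mathcal{H}$ as right $\mathcal{H}$-modules, where $M'$ is $M$ with $\mathcal{A}$-action precomposed by $\iota$; its support is again a single Weyl chamber, which (re-applying the previous paragraph if necessary) I may take to be $\Lambda^{+}(1)$. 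Thus it suffices to prove that $M'\otimes_{\mathcal{A}}\mathcal{H}\to M'\otimes_{\mathcal{A}}\cInd_{I(1)}^{G}\trivrep$ is injective.

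\emph{Comparison with $\cInd_{K}^{G}V$.} By the Satake/Bernstein-type comparison of \cite{MR3666048}, the restriction of $\cInd_{I(1)}^{G}\trivrep$ along $\mathcal{A}$ breaks up according to Weyl chambers, and the part attached to the dominant chamber is governed by $\cInd_{K}^{G}V$ for the special parahoric $K$ and a certain representation $V$ of $K$: the spherical Hecke algebra $\mathcal{H}(K,V)=\End_{G}(\cInd_{K}^{G}V)$ is identified with the subalgebra $\bigoplus_{\nu(\lambda)\text{ dominant}}CE(\lambda)$ of $\mathcal{A}$, through which $M'$ is a module, and under this identification $M'\otimes_{\mathcal{A}}\cInd_{I(1)}^{G}\trivrep\cong M'\otimes_{\mathcal{H}(K,V)}\cInd_{K}^{G}V$ and $M'\otimes_{\mathcal{A}}\mathcal{H}\cong M'\otimes_{\mathcal{H}(K,V)}(\cInd_{K}^{G}V)^{I(1)}$, the map in question being induced by the inclusion $(\cInd_{K}^{G}V)^{I(1)}\hookrightarrow\cInd_{K}^{G}V$. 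It then remains to see that $M'\otimes_{\mathcal{H}(K,V)}(-)$ keeps this map injective, and for that I would use the description of $\cInd_{K}^{G}V$ obtained in \cite{MR3600042}, which provides enough control of $\cInd_{K}^{G}V$ as an $\mathcal{H}(K,V)$-module -- concretely, flatness of the relevant cokernel over $\mathcal{H}(K,V)$, i.e.\ vanishing of the pertinent $\mathrm{Tor}_{1}^{\mathcal{H}(K,V)}$ -- to conclude.

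I expect the last step to be the main obstacle: one must read off from \cite{MR3600042} and \cite{MR3666048} the precise freeness/flatness of $\cInd_{K}^{G}V$ over the spherical Hecke algebra that makes $M'\otimes_{\mathcal{H}(K,V)}(-)$ left exact on the short exact sequence $0\to(\cInd_{K}^{G}V)^{I(1)}\to\cInd_{K}^{G}V\to(\text{cokernel})\to 0$, and one must track the twist $\iota$ and the displacement of supports coming from the first two steps carefully enough that everything genuinely lands in the dominant chamber, where this comparison is available.
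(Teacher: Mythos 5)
Your plan has the right overall shape (reduce to a single chamber, convert coinduction to induction, compare with a compact induction from the special parahoric $K$), but the three steps as you state them each contain a gap, and the last one is where the real content of the proof lies. For the first two steps: the modules $\Hom_{\mathcal{A}}(\mathcal{H},n_wM)$ for different chambers are genuinely non-isomorphic $\mathcal{H}$-modules; they coincide only when $\Delta_{w_1}=\Delta_{w_2}$ (and even that needs a duality trick on top of \cite[Theorem~3.13]{arXiv:1406.1003_accepted}), so one cannot ``walk through adjacent chambers'' to the dominant one. In fact the dominant-support case of the Hom module is the trivial case (it is coinduced from $\mathcal{A}_1=j_\emptyset^{-*}(\mathcal{H}_\emptyset^-)$, i.e.\ from the minimal Levi); the essential case is $w=w_\Delta$, and the correct reduction sends $w=w_J$ to the longest-element case for the Levi $L_J$ via the parabolic functors and \cite{arXiv:1703.04921}, using exactness of $-\otimes_{(\mathcal{H}_{J'}^+,j_{J'}^+)}\mathcal{H}$. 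Likewise the Hom-to-tensor passage is not a formal Frobenius property of $\mathcal{A}\subset\mathcal{H}$: the true statement, which needs the support hypothesis together with $j_\emptyset^+(\mathcal{H}_\emptyset^+)=\mathcal{A}_{w_\Delta}$, $j_\emptyset^{-*}(\mathcal{H}_\emptyset^-)=\mathcal{A}_1$ and the comparison of coinduction from $\mathcal{H}_\emptyset^+$ with induction from $\mathcal{H}_\emptyset^-$ in \cite{arXiv:1612.01312}, is $\Hom_{\mathcal{A}}(\mathcal{H},n_{w_\Delta}M)\simeq M\otimes_{\mathcal{A}}\mathcal{H}$ for $\supp M=\Lambda^+(1)$; the twist is conjugation by $w_\Delta$, so a dominant-supported Hom module becomes an antidominant-supported tensor module. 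Your proposal to repair this by ``re-applying the previous paragraph'' fails because for the induced (tensor) modules the chamber-change maps are strict inclusions, not isomorphisms, and for an antidominant-supported module the Satake comparison you invoke (whose image is the dominant part $C[\Lambda^+(1)]$) does not act invertibly, so the last step would not apply.

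For the final step, the asserted identification $M'\otimes_{\mathcal{A}}\cInd_{I(1)}^G\trivrep\cong M'\otimes_{\mathcal{H}(K,V)}\cInd_K^GV$ is not available and essentially begs the question; what one has, and what suffices, is a map to $M\otimes_{C[\Lambda(1)]_\omega}\pi_\Delta$, where $\pi_\Delta=C[\Lambda(1)]_\omega\otimes_{C[\Lambda^+(1)]_\omega}\cInd_K^GV_\Delta$ is the localized compact induction, through which $M\otimes_{\mathcal{A}}\mathcal{H}\to M\otimes_{\mathcal{A}}\cInd_{I(1)}^G\trivrep$ factors. Before this one must also decompose along $Z_\kappa$-characters and, by induction on $\dim G$ using \cite[Theorem~3.13]{arXiv:1406.1003_accepted}, reduce to the case where the characters in the orbit $\omega$ are trivial on $Z_\kappa\cap\Lambda'_\alpha(1)$ for all $\alpha$; this reduction, absent from your plan, is needed for the weights $V_{\psi,\Delta}$ and the elements $\tau_\alpha$ to exist. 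Finally, the flatness/Tor-vanishing you hope to read off from \cite{MR3600042} is not an off-the-shelf statement; the actual argument embeds everything into $\Ind_{\overline{B}}^GC[\Lambda(1)]_\omega$, filters by open subsets $A\subset W_0$ for the Bruhat order, identifies the graded pieces as $c_w(\pi_{\emptyset,A}/\pi_{\emptyset,A'})$ with $c_w=\prod_{w^{-1}(\alpha)>0}(1-\tau_\alpha)$ a non-zero-divisor (hence free $C[\Lambda(1)]_\omega$-modules), shows the inclusion of the Hecke-side graded pieces admits a $C[\Lambda(1)]_\omega$-linear section, and then obtains injectivity after $M\otimes_{C[\Lambda(1)]_\omega}-$ by induction along the filtration. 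Without an argument at this level of precision the key step of your plan remains open, as you yourself anticipate.
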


We take a lift $n_w$ of each $w\in W_0$ in $W(1)$ such that $n_{w_1w_2} = n_{w_1}n_{w_2}$ if $\ell(w_1w_2) = \ell(w_1) + \ell(w_2)$.
Let $M$ be an $\mathcal{A}$-module and $w\in W_0$.
We define a new $\mathcal{A}$-module $n_wM$ as follows.
As a vector space, $n_wM = M$ and the action of $E(\lambda)\in \mathcal{A}$ on $n_wM$ is the action of $E(n_w^{-1}\lambda n_w)$ on $M$.
This defines an auto-equivalence of the category of $\mathcal{A}$-modules.
If $\supp M = v(\Lambda^+(1))$, then $\supp n_wM = wv(\Lambda^+(1))$.
With this notation, Lemma~\ref{lem:injective, supported on a chamber} is equivalent to the following.
\begin{lem}\label{lem:injective, twist form}
Let $M$ be an $\mathcal{A}$-module such that $\supp M = \Lambda^+(1)$.
Then the map $\Hom_\mathcal{A}(\mathcal{H},n_wM)\to \Hom_\mathcal{A}(\mathcal{H},n_wM)\otimes_{\mathcal{H}}\cInd_{I(1)}^G\trivrep$ is injective.
\end{lem}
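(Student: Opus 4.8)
The plan is to carry out, in this order, the four reductions announced in the introduction, at the end of which the assertion becomes a statement controlled by the mod $p$ representation theory of the special parahoric subgroup $K$. We proceed by induction on the semisimple rank of $G$, so that Theorem~\ref{thm:injective} may be assumed for every proper standard Levi subgroup $L_J$ (each $L_J$ being itself a connected reductive $p$-adic group of smaller semisimple rank); proving the present lemma for $G$ is the inductive step. First I would use parabolic induction to remove the twist $n_w$. If $w\neq e$, one shows that there is a proper subset $J\subsetneq\Delta$ for which $n_wM$ --- whose support is the non-dominant chamber $w(\Lambda^+(1))$ --- fits into an embedding $\Hom_{\mathcal{A}}(\mathcal{H},n_wM)\hookrightarrow\Ind_{\mathcal{H}_J}^{\mathcal{H}}N$ of right $\mathcal{H}$-modules, for a suitable $\mathcal{H}_J$-module $N$ lying in the analogue of $\mathcal{C}$ for $\mathcal{H}_J$; constructing this embedding is where the description of the support of a parabolically induced module from \cite{arXiv:1406.1003_accepted} and the results of Ollivier--Vign\'eras in \cite{arXiv:1703.04921} are used. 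By \cite{arXiv:1406.1003_accepted} the functor $-\otimes_{\mathcal{H}}\cInd_{I(1)}^G\trivrep$ carries $\Ind_{\mathcal{H}_J}^{\mathcal{H}}$ to $\Ind_{P_J}^G(\,-\otimes_{\mathcal{H}_J}\cInd_{I(1)_J}^{L_J}\trivrep)$, and $\Ind_{P_J}^G$ is exact, so the inductive hypothesis applied to $N$ shows that $\Ind_{\mathcal{H}_J}^{\mathcal{H}}N\to\Ind_{\mathcal{H}_J}^{\mathcal{H}}N\otimes_{\mathcal{H}}\cInd_{I(1)}^G\trivrep$ is injective; composing the embedding with this injection and using the commutative square expressing the naturality of $P\mapsto P\otimes_{\mathcal{H}}\cInd_{I(1)}^G\trivrep$ shows that the map for $\Hom_{\mathcal{A}}(\mathcal{H},n_wM)$ is injective. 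Hence we may assume $w=e$, i.e.\ $\supp M=\Lambda^+(1)$.

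Next I would dispose of $\Hom_{\mathcal{A}}(\mathcal{H},-)$. Since $\mathcal{H}$ is a Frobenius extension of $\mathcal{A}$ (a consequence of the structure theory of \cite{MR3484112}), there is an $\mathcal{A}$-module $M'$ --- namely $M$ twisted by the associated Nakayama-type automorphism of $\mathcal{A}$ --- and an isomorphism $\Hom_{\mathcal{A}}(\mathcal{H},M)\cong M'\otimes_{\mathcal{A}}\mathcal{H}$ of right $\mathcal{H}$-modules; one checks that $M'$ again satisfies the defining condition of $\mathcal{C}$, the automorphism in question respecting the relevant central elements. Under this isomorphism the map of the lemma becomes the map $M'\otimes_{\mathcal{A}}\mathcal{H}\to M'\otimes_{\mathcal{A}}\cInd_{I(1)}^G\trivrep$ induced by the $\mathcal{H}$-linear map $\mathcal{H}\to\cInd_{I(1)}^G\trivrep$ sending $1$ to the characteristic function of $I(1)$. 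It thus remains to prove that this map is injective.

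For the last reduction I would invoke \cite{MR3666048}. Because $\supp M'$ is a single Weyl chamber, $M'$ (after restriction to the appropriate positive Bernstein subalgebra) yields a datum to which \cite{MR3666048} attaches a smooth representation $V$ of $K$, together with a natural identification of $M'\otimes_{\mathcal{A}}\cInd_{I(1)}^G\trivrep$ with a subrepresentation, resp.\ a quotient, of $\cInd_K^GV$, and an explicit description of $M'\otimes_{\mathcal{A}}\mathcal{H}$ and of the map $M'\otimes_{\mathcal{A}}\mathcal{H}\to M'\otimes_{\mathcal{A}}\cInd_{I(1)}^G\trivrep$ in these terms. Note that for $\lambda\in\Lambda_S(1)$ the element $z_\lambda$ lies in $\mathcal{A}$ and is central there, by Lemma~\ref{lem:description of z_lambda} together with Remark~\ref{rem:inverse of E(lambda)}, so it acts on $M'\otimes_{\mathcal{A}}\cInd_{I(1)}^G\trivrep$ through the factor $M'$; on the $\cInd_K^GV$ side this action is realized by the corresponding spherical Hecke operator in $\End_G(\cInd_K^GV)$.

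Finally, the injectivity. For a suitable $\lambda\in\Lambda_S(1)$ with $\nu(\lambda)$ regular and compatible with $\supp M'$, the study of $\cInd_K^GV$ in \cite{MR3600042} --- concretely, the torsion-freeness of $\cInd_K^GV$ over the commutative spherical Hecke algebra acting on it --- shows that the spherical Hecke operator attached to $z_\lambda$ is injective on $\cInd_K^GV$, hence on the subrepresentation, resp.\ quotient, of the previous step; on the other hand $M\in\mathcal{C}$ makes $z_\lambda$ invertible on $M'\otimes_{\mathcal{A}}\mathcal{H}$. A comparison through the commutative square of the previous step --- the point being that, after the identifications of \cite{MR3666048}, the two $z_\lambda$-actions are intertwined by the map while $z_\lambda$ is invertible on the source and injective on the target --- then forces the kernel of $M'\otimes_{\mathcal{A}}\mathcal{H}\to M'\otimes_{\mathcal{A}}\cInd_{I(1)}^G\trivrep$ to vanish. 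This final step is the heart of the argument and where I expect the main difficulty: Steps 1 and 2 are essentially formal once the cited compatibilities are in hand, whereas the last step requires setting up the dictionary with $\cInd_K^GV$ precisely and extracting from \cite{MR3600042} exactly the injectivity of the relevant spherical operators in the generality needed, then matching it against the module-theoretic statement.
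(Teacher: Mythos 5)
Your reduction in Step 1 goes in the wrong direction, and this is fatal to the induction on semisimple rank. The parabolic subgroup available for the twist $n_w$ is the one attached to $J=\Delta\setminus\Delta_w$ where $\Delta_w=\{\alpha\in\Delta\mid w(\alpha)>0\}$; so for $w=w_\Delta$ one gets $J=\Delta$ and there is no proper Levi to induct from. Indeed $\Hom_{\mathcal{A}}(\mathcal{H},n_{w_\Delta}M)\simeq M\otimes_{\mathcal{A}}\mathcal{H}\simeq M\otimes_{C[\Lambda(1)]_\omega}X_\Delta$, and $X_\Delta$ (the image of $T^*_{n_{w_\Delta}}$, the piece related to $\cInd_K^GV$) is precisely not of the form $\Ind_{\mathcal{H}_J}^{\mathcal{H}}N$ for a proper $J$. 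The paper's reduction is the opposite of yours: the untwisted case $w=e$ is the easy one (through $\Hom_{(\mathcal{H}_\emptyset^-,j_\emptyset^{-*})}(\mathcal{H},-)$ and Ollivier--Vign\'eras it reduces to the minimal Levi $Z$), while the antidominant twist $w=w_\Delta$ is the irreducible core that all of Sections 3.6--3.9 are designed to treat. So your claim that every $w\neq e$ admits a proper $J\subsetneq\Delta$ is false for $w=w_\Delta$, and your induction never reaches the hard case; note also that if a reduction to $w=e$ existed, your Steps 3--4 would be unnecessary, which signals the miscalibration. Relatedly, the Hom-to-tensor conversion the paper actually proves is $\Hom_{\mathcal{A}}(\mathcal{H},n_{w_\Delta}M)\simeq M\otimes_{\mathcal{A}}\mathcal{H}$ (via specific results of Abe's parabolic-induction paper, not a general Frobenius-extension statement for $\mathcal{A}\subset\mathcal{H}$), i.e.\ the twist absorbed is exactly $n_{w_\Delta}$ --- incompatible with your having normalized to $w=e$ first. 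You also omit the $Z_\kappa$-isotypic decomposition and the reduction to orbits $\omega$ trivial on $Z_\kappa\cap\Lambda'_\alpha(1)$, which the comparison with the weights $V_J$ of $K$ requires.

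The final step is a non sequitur: from $z_\lambda$ being invertible on the source and injective on the target one cannot conclude that the map is injective --- the kernel is just a $z_\lambda$-stable submodule on which $z_\lambda$ is invertible, which is no contradiction (think of projecting a direct sum of two copies of a localized module onto one copy). The paper's proof of Lemma~\ref{lem:injective lemma, final form} is of a genuinely different nature: it realizes $X_\Delta\subset X_\emptyset=\pi_\emptyset^{I(1)}$ and $\pi_\Delta\subset\pi_\emptyset\simeq\Ind_{\overline{B}}^GC[\Lambda(1)]_\omega$, filters everything by Bruhat-open subsets $A\subset W_0$, shows the graded pieces are $c_w$ times free $C[\Lambda(1)]_\omega$-modules with $c_w=\prod_{w^{-1}(\alpha)>0}(1-\tau_\alpha)$ a nonzerodivisor (Lemma~\ref{lem:H-mod, quotient of filtration}), and constructs $C[\Lambda(1)]_\omega$-linear splittings and sections (Lemmas~\ref{lem:splits, from the filtration} and \ref{lem:X to pi has a section, on quotient}); only because of this freeness and these splittings does injectivity survive the non-flat functor $M\otimes_{C[\Lambda(1)]_\omega}(-)$, and one concludes by induction on $\#A$. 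Torsion-freeness of $\cInd_K^GV$ over the spherical Hecke algebra cannot replace this: the difficulty is not injectivity of Hecke operators on $\cInd_K^GV$, but injectivity after tensoring with an arbitrary module $M$, which is exactly what your argument leaves unaddressed.
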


\subsection{Reduction to $w = w_J$ for some $J\subset \Delta$}\label{subsec:Reduction to w=w_J}
For a subset $J\subset \Delta$, let $w_J$ be the longest element in $W_{0,J}$.
We prove that we may assume $w = w_J$ for some $J$ in Lemma~\ref{lem:injective, twist form}.

We relate our $M$ with modules studied in \cite{arXiv:1406.1003_accepted}.
Consider the homomorphism $\mathcal{A}\to C[\Lambda(1)]$ defined by
\begin{equation}\label{eq:hom chi tilde}
E(\lambda)\mapsto \begin{cases}\tau_\lambda & (\lambda\in\Lambda^+(1)),\\0 & (\text{otherwise}).\end{cases}
\end{equation}
We regard $C[\Lambda(1)]$ as a right $\mathcal{A}$-module via this homomorphism.
For $w\in W_0$, we also have the $\mathcal{A}$-module $n_wC[\Lambda(1)]$.
Then we consider the module
\[
n_wC[\Lambda(1)]\otimes_{\mathcal{A}}\mathcal{H}.
\]
This is a $(C[\Lambda(1)],\mathcal{H})$-bimodule.

Let $M$ be an $\mathcal{A}$-module such that $\supp M = \Lambda^+(1)$.
Then we define a structure of $C[\Lambda(1)]$-module on $M$ by
\[
m\tau_{\lambda_1\lambda_2^{-1}} = mE(\lambda_1)E(\lambda_2)^{-1}
\]
where $\lambda_1,\lambda_2\in \Lambda^+(1)$ and $m\in M$.
(Since $\supp M = \Lambda^+(1)$, $E(\lambda_2)$ is invertible on $M$.)
It is easy to see that this definition is well-defined and define a structure of $C[\Lambda(1)]$-module.
Then we have
\[
M\otimes_{C[\Lambda(1)]}n_wC[\Lambda(1)]\simeq n_wM.
\]
The isomorphisms are given by $m\otimes f\mapsto mf$ from the left hand side to the right hand side and $m\mapsto m\otimes 1$ in the opposite direction.
Therefore we have 
\[
n_wM\otimes_{\mathcal{A}}\mathcal{H}\simeq M\otimes_{C[\Lambda(1)]}\otimes n_wC[\Lambda(1)]\otimes_{\mathcal{A}}\mathcal{H}.
\]

For each $w\in W_0$, set $\Delta_w = \{\alpha\in\Delta\mid w(\alpha)>0\}$.
Then by \cite[Theorem~3.13]{arXiv:1406.1003_accepted}, if $\Delta_{w_1} = \Delta_{w_2}$, we have
\[
n_{w_1}C[\Lambda(1)]\otimes_{\mathcal{A}}\mathcal{H}\simeq n_{w_2}C[\Lambda(1)]\otimes_{\mathcal{A}}\mathcal{H}.
\]
Therefore we get (1) of the next lemma.
\begin{lem}
Let $M$ be as in Lemma~\ref{lem:injective, twist form}.
If $w_1,w_2\in W_0$ satisfies $\Delta_{w_1} = \Delta_{w_2}$, then we have
\begin{enumerate}
\item $n_{w_1}M\otimes_{\mathcal{A}}\mathcal{H}\simeq n_{w_2}M\otimes_{\mathcal{A}}\mathcal{H}$.
\item $\Hom_{\mathcal{A}}(\mathcal{H},n_{w_1}M)\simeq \Hom_{\mathcal{A}}(\mathcal{H},n_{w_2}M)$.
\end{enumerate}
\end{lem}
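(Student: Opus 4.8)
The plan is as follows.

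\emph{Statement (1)} is already contained in the discussion preceding the lemma: there is an isomorphism of right $\mathcal{H}$-modules $n_wM\otimes_{\mathcal{A}}\mathcal{H}\simeq M\otimes_{C[\Lambda(1)]}\bigl(n_wC[\Lambda(1)]\otimes_{\mathcal{A}}\mathcal{H}\bigr)$ for every $w\in W_0$, and \cite[Theorem~3.13]{arXiv:1406.1003_accepted} supplies a $(C[\Lambda(1)],\mathcal{H})$-bimodule isomorphism $n_{w_1}C[\Lambda(1)]\otimes_{\mathcal{A}}\mathcal{H}\simeq n_{w_2}C[\Lambda(1)]\otimes_{\mathcal{A}}\mathcal{H}$ whenever $\Delta_{w_1}=\Delta_{w_2}$. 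Applying $M\otimes_{C[\Lambda(1)]}(-)$ to the latter and composing with the former for $w=w_1$ and $w=w_2$ yields (1).

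\emph{For statement (2)} the plan is to run the same argument with $\Hom_{\mathcal{A}}(\mathcal{H},-)$ in place of $(-)\otimes_{\mathcal{A}}\mathcal{H}$. I would use that $\mathcal{H}$ is free of finite rank as a right $\mathcal{A}$-module (structure theory of $\mathcal{H}$, \cite{MR3484112}), hence finitely generated projective; writing $\mathcal{H}^{\vee}=\Hom_{\mathcal{A}}(\mathcal{H},\mathcal{A})$ for the $\mathcal{A}$-dual of this right $\mathcal{A}$-module, an $(\mathcal{A},\mathcal{H})$-bimodule via the residual left multiplication by $\mathcal{H}$ on $\mathcal{H}$, one has a natural isomorphism $\Hom_{\mathcal{A}}(\mathcal{H},N)\simeq N\otimes_{\mathcal{A}}\mathcal{H}^{\vee}$ of right $\mathcal{H}$-modules, functorial in the right $\mathcal{A}$-module $N$ and compatible with any left module structure carried by $N$. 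Taking $N=n_wM$ and feeding in the isomorphism $n_wM\simeq M\otimes_{C[\Lambda(1)]}n_wC[\Lambda(1)]$ already established, I get
\[
\Hom_{\mathcal{A}}(\mathcal{H},n_wM)\;\simeq\;n_wM\otimes_{\mathcal{A}}\mathcal{H}^{\vee}\;\simeq\;M\otimes_{C[\Lambda(1)]}\bigl(n_wC[\Lambda(1)]\otimes_{\mathcal{A}}\mathcal{H}^{\vee}\bigr).
\]
So (2) will follow, exactly as (1) did, from a $(C[\Lambda(1)],\mathcal{H})$-bimodule isomorphism $n_{w_1}C[\Lambda(1)]\otimes_{\mathcal{A}}\mathcal{H}^{\vee}\simeq n_{w_2}C[\Lambda(1)]\otimes_{\mathcal{A}}\mathcal{H}^{\vee}$ for $\Delta_{w_1}=\Delta_{w_2}$. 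The cleanest route is to identify $\mathcal{H}^{\vee}$ with $\mathcal{H}$ as $(\mathcal{A},\mathcal{H})$-bimodules --- i.e.\ to prove that $\mathcal{A}\subset\mathcal{H}$ is a Frobenius extension --- after which (2) reduces directly to (1), and one even gets the clean statement $\Hom_{\mathcal{A}}(\mathcal{H},n_wM)\simeq n_wM\otimes_{\mathcal{A}}\mathcal{H}$.

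\emph{The main obstacle} is this self-duality of $\mathcal{H}$ over $\mathcal{A}$. In characteristic $p$ the quadratic relations have the shape $T_s^{2}=c_sT_s$ with $c_s\in C[Z_\kappa]$ possibly zero, so $\mathcal{H}$ is far from a symmetric algebra and the Frobenius property must be checked by hand: I would use the coset decomposition $W(1)=\bigsqcup_{w\in W_0}\Lambda(1)n_w$ and the corresponding right $\mathcal{A}$-basis $\{T^{*}_{n_w}\}_{w\in W_0}$ of $\mathcal{H}$, together with the braid relations, to exhibit an $\mathcal{A}$-valued bilinear pairing on $\mathcal{H}$ with an explicit dual basis. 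Failing that, the fallback is to obtain the $\Hom$-analogue of \cite[Theorem~3.13]{arXiv:1406.1003_accepted} directly --- either by dualizing the isomorphism there with $\Hom_{C[\Lambda(1)]}(-,C[\Lambda(1)])$, or by re-running its proof on the ``costandard'' modules $\Hom_{\mathcal{A}}(\mathcal{H},n_wC[\Lambda(1)])$. In either route the book-keeping of left/right actions and of the (possibly non-commutative) group ring $C[\Lambda(1)]$ is where most of the work lies: the intermediate isomorphisms must be isomorphisms of $(C[\Lambda(1)],\mathcal{H})$-bimodules so that $M\otimes_{C[\Lambda(1)]}(-)$ transports them, and one must check that forming $\mathcal{H}^{\vee}$ (resp.\ the costandard module) commutes with the twist $n_w(-)$.
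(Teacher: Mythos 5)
Part (1) of your proposal is exactly the paper's argument. The gap is in part (2). Your reduction, using that $\mathcal{H}$ is finitely generated projective as a right $\mathcal{A}$-module, to a bimodule statement about $\mathcal{H}^{\vee}=\Hom_{\mathcal{A}}(\mathcal{H},\mathcal{A})$ is legitimate, but the route you then take rests on the unproved assertion that $\mathcal{H}^{\vee}\simeq\mathcal{H}$ as $(\mathcal{A},\mathcal{H})$-bimodules (an untwisted Frobenius property of $\mathcal{A}\subset\mathcal{H}$), with the advertised consequence $\Hom_{\mathcal{A}}(\mathcal{H},n_wM)\simeq n_wM\otimes_{\mathcal{A}}\mathcal{H}$. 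That consequence is false: the Hom--tensor comparison over $\mathcal{A}$ necessarily reverses the Weyl chamber. Indeed, later in the paper one has $\Hom_{\mathcal{A}}(\mathcal{H},n_{w_\Delta}M)\simeq M\otimes_{\mathcal{A}}\mathcal{H}$ for $\supp M=\Lambda^+(1)$ (via \cite[Proposition~4.13 and Corollary~4.19]{arXiv:1612.01312}); combined with your ``clean statement'' this would force $n_{w_\Delta}M\otimes_{\mathcal{A}}\mathcal{H}\simeq M\otimes_{\mathcal{A}}\mathcal{H}$ for every such $M$, i.e.\ the standard modules induced from the dominant and antidominant chambers would always be isomorphic. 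This fails in general (already for $\mathrm{GL}_2(\mathbb{Q}_p)$ the two orders of extension between the trivial and Steinberg characters of $\mathcal{H}$ are not isomorphic), and its failure is precisely why \cite[Theorem~3.13]{arXiv:1406.1003_accepted} carries the hypothesis $\Delta_{w_1}=\Delta_{w_2}$ and why the paper's subsequent reductions involve $w_\Delta$-twists. So $\mathcal{A}\subset\mathcal{H}$ can at best be Frobenius in a twisted sense, with the left $\mathcal{A}$-action on $\mathcal{H}^{\vee}$ twisted by $w_\Delta$ (and $T$, $o_-$ traded for $T^*$, $o_+$); establishing such a statement is essentially the entire content of (2) and is not carried out in your sketch.

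Your fallback (dualize Theorem~3.13, or rerun its proof for the costandard modules) is the right instinct, and once the twist is taken into account (2) does reduce to (1) applied to $w_1w_\Delta,w_2w_\Delta$, using $\Delta_{ww_\Delta}=\Delta\setminus(-w_\Delta(\Delta_w))$ --- this is how the paper argues. But the work you defer is exactly where the difficulty sits: dualizing turns right $\mathcal{H}$-modules into left ones, so you need an (anti-)automorphism to come back --- the paper uses $f=\iota\circ\zeta$ with $f(E(\lambda))=(-1)^{\ell(\lambda)}E(\lambda^{-1})$, and it is this $f$ that produces the chamber reversal $\supp(n_{w}M^*)^f=ww_\Delta(\Lambda^+(1))$ --- and, since the paper dualizes $M$ itself over $C$, also a bidual step ($M\hookrightarrow M^{**}$ with cokernel again supported on $\Lambda^+(1)$) to descend the resulting isomorphism from $M^{**}$ to $M$. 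Your variant of dualizing the universal modules over $C[\Lambda(1)]$ would avoid the bidual step, but the identification of $\Hom_{C[\Lambda(1)]}(n_wC[\Lambda(1)]\otimes_{\mathcal{A}}\mathcal{H},C[\Lambda(1)])$ with $\Hom_{\mathcal{A}}(\mathcal{H},n_{w'}C[\Lambda(1)])$ for the appropriate $w'$ (namely $w'=ww_\Delta$, up to the left/right bookkeeping above) is again the same chamber-reversing duality and still has to be proved.
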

\begin{proof}
We have proved (1).
We prove (2).

Let $\iota$ be an automorphism of $\mathcal{H}$ defined in \cite[Proposition~4.23]{MR3484112} and $\zeta\colon \mathcal{H}\to \mathcal{H}$ an anti-automorphism defined by $\zeta(T_w) = T_{w^{-1}}$.
(The linear map $\zeta$ is an anti-homomorphism by \cite[4.1]{arXiv:1406.1003_accepted}.)
Set $f = \iota\circ\zeta$.
Since $\zeta(E(\lambda)) = E_{o_+}(\lambda^{-1})$ \cite[Lemma~4.3]{arXiv:1406.1003_accepted} and $\iota(E_{o_+}(\lambda)) = (-1)^{\ell(\lambda)}E(\lambda)$ \cite[Lemma~5.31]{MR3484112}, we have $f(E(\lambda)) = (-1)^{\ell(\lambda)}E(\lambda^{-1})$.
In particular, $f$ preserves $\mathcal{A}$.
It is easy to see $f^2(T_w) = T_w$ for any $w\in W(1)$.
Hence $f^2$ is identity.

For a left $\mathcal{H}$-module $N$, we define a right $\mathcal{H}$-module $N^f$ by $N^f = N$ as a vector space and the action of $X\in \mathcal{H}$ on $N^f$ is the action of $f(X)$ on $N$.
Then $m\otimes X\mapsto f(X)\otimes m$ gives an isomorphism $(N^f\otimes_{\mathcal{A}}\mathcal{H})^f\simeq \mathcal{H}\otimes_{\mathcal{A}}N$.

For a right $\mathcal{H}$-module (resp.\ $\mathcal{A}$-module) $L$, set $L^* = \Hom_C(L,C)$.
Then this is a left $\mathcal{H}$-module (resp.\ $\mathcal{A}$-module).
Let $M$ be as in the lemma.
Since $f(E(\lambda)) = (-1)^{\ell(\lambda)}E(\lambda^{-1})$, we have $\supp (n_{w_1}M^*)^f = w_1(\Lambda^+(1)^{-1}) = w_1w_\Delta(\Lambda^+(1))$.
Hence $(n_{w_1}M^*)^f = n_{w_1w_\Delta}M'$ for some $\mathcal{A}$-module $M'$ such that $\supp M' = \Lambda^+(1)$.
Since $\Delta_{w_1w_\Delta} = \Delta\setminus(-w_\Delta(\Delta_{w_1}))$, we also have $\Delta_{w_1w_\Delta} = \Delta_{w_2w_\Delta}$.
Hence by (1), we get $n_{w_1w_\Delta}M'\otimes_{\mathcal{A}}\mathcal{H}\simeq n_{w_2w_\Delta }M'\otimes_{\mathcal{A}}\mathcal{H}$.
Therefore we get $(n_{w_1}M^*)^f\otimes_{\mathcal{A}}\mathcal{H}\simeq (n_{w_2}M^*)^f\otimes_{\mathcal{A}}\mathcal{H}$.
Applying $(\cdot)^f$ to the both sides and using $(N^f\otimes_{\mathcal{A}}\mathcal{H})^f\simeq \mathcal{H}\otimes_{\mathcal{A}}N$, we get $\mathcal{H}\otimes_{\mathcal{A}}n_{w_1}M^*\simeq \mathcal{H}\otimes_{\mathcal{A}}n_{w_2}M^*$.
Hence we have $(\mathcal{H}\otimes_{\mathcal{A}}n_{w_1}M^*)^*\simeq (\mathcal{H}\otimes_{\mathcal{A}}n_{w_2}M^*)^*$.

Now we have 
\begin{align*}
(\mathcal{H}\otimes_{\mathcal{A}}n_{w_1}M^*)^* & = \Hom_C(\mathcal{H}\otimes_{\mathcal{A}}n_{w_1}M^*,C)\\
& \simeq \Hom_{\mathcal{A}}(\mathcal{H},n_{w_1}M^{**}).
\end{align*}
Hence we have $\Hom_{\mathcal{A}}(\mathcal{H},n_{w_1}M^{**})\simeq \Hom_{\mathcal{A}}(\mathcal{H},n_{w_2}M^{**})$.
We have an embedding $M\hookrightarrow M^{**}$.
Let $L$ be the cokernel.
Then $\supp L = \Lambda^+(1)$ and  we have an embedding $L\hookrightarrow L^{**}$.
Therefore we have an exact sequence $0\to M\to M^{**}\to L^{**}$ and it gives $0\to n_{w_i}M\to n_{w_i}M^{**}\to n_{w_i}L^{**}$ for $i = 1,2$.
Hence we get the following commutative diagram with exact rows:
\[
\begin{tikzcd}
0\arrow[d] & 0\arrow[d]\\
\Hom_{\mathcal{A}}(\mathcal{H},n_{w_1}M)\arrow[d] & \Hom_{\mathcal{A}}(\mathcal{H},n_{w_2}M)\arrow[d]\\
\Hom_{\mathcal{A}}(\mathcal{H},n_{w_1}M^{**})\arrow[r,dash,"\sim"]\arrow[d] & \Hom_{\mathcal{A}}(\mathcal{H},n_{w_2}M^{**})\arrow[d]\\
\Hom_{\mathcal{A}}(\mathcal{H},n_{w_1}L^{**})\arrow[r,dash,"\sim"] & \Hom_{\mathcal{A}}(\mathcal{H},n_{w_2}L^{**}).
\end{tikzcd}
\]
We have $\Hom_{\mathcal{A}}(\mathcal{H},n_{w_1}M)\simeq \Hom_{\mathcal{A}}(\mathcal{H},n_{w_2}M)$.
\end{proof}
For given $w\in W$, set $J = \Delta\setminus\Delta_w$.
Then we have $\Delta_{w_J} = \Delta\setminus J = \Delta_{w}$.
Therefore, to prove Lemma~\ref{lem:injective, twist form}, we may assume that $w = w_J$ for some $J\subset\Delta$.

\subsection{Reduction to $w = w_\Delta$}\label{subsec:Reduction to w=w_Delta}
Set 
\begin{equation}\label{eq:definition of A_w}
\mathcal{A}_w = \bigoplus_{\lambda\in w(\Lambda^+(1))}CE(\lambda)\subset \mathcal{A}.
\end{equation}
\begin{lem}\label{lem:Switch from A to A_w}
Let $M$ be an $\mathcal{A}$-module such that $\supp M = w(\Lambda^+(1))$.
Then we have $\Hom_{\mathcal{A}}(\mathcal{H},M)\xrightarrow{\sim}\Hom_{\mathcal{A}_w}(\mathcal{H},M)$.
\end{lem}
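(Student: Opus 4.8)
The plan is to show that the obvious restriction map is already a bijection. Since $\mathcal{A}_w\subseteq\mathcal{A}$, any right $\mathcal{A}$-linear map $\mathcal{H}\to M$ is in particular right $\mathcal{A}_w$-linear, and both $\Hom_{\mathcal{A}}(\mathcal{H},M)$ and $\Hom_{\mathcal{A}_w}(\mathcal{H},M)$ may be regarded as subspaces of $\Hom_C(\mathcal{H},M)$, the former contained in the latter; moreover the $\mathcal{H}$-module structures on both are given by the same formula $(\varphi X)(Y)=\varphi(XY)$, so this inclusion is $\mathcal{H}$-equivariant. It therefore suffices to prove the reverse inclusion, namely that every $\varphi\in\Hom_{\mathcal{A}_w}(\mathcal{H},M)$ is automatically right $\mathcal{A}$-linear. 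Since $\{E(\mu)\mid\mu\in\Lambda(1)\}$ spans $\mathcal{A}$, this reduces to checking $\varphi(XE(\mu))=\varphi(X)E(\mu)$ for every $X\in\mathcal{H}$ and every $\mu\in\Lambda(1)$.

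First I would fix some $\lambda\in w(\Lambda^+(1))$ with $\nu(\lambda)$ regular (for instance $\lambda=n_w\lambda_0n_w^{-1}$ with $\lambda_0\in\Lambda_S(1)$ regular dominant, as used in the proof of Lemma~\ref{lem:decomposition along support}); since $\supp M=w(\Lambda^+(1))$, the operator $E(\lambda)$ acts invertibly on $M$, and $E(\lambda)\in\mathcal{A}_w$. Now split into the two cases permitted by the definition of $\supp M$. If $w^{-1}(\nu(\mu))$ is dominant, then $\mu\in w(\Lambda^+(1))$, so $E(\mu)\in\mathcal{A}_w$ and the identity $\varphi(XE(\mu))=\varphi(X)E(\mu)$ is just $\mathcal{A}_w$-linearity. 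If $w^{-1}(\nu(\mu))$ is not dominant, then $\nu(\mu)$ lies outside the unique closed Weyl chamber containing the regular element $\nu(\lambda)$, so the multiplication rule in $\mathcal{A}$ gives $E(\mu)E(\lambda)=0$. Using $\mathcal{A}_w$-linearity with $E(\lambda)\in\mathcal{A}_w$,
\[
\varphi(XE(\mu))E(\lambda)=\varphi\bigl(XE(\mu)E(\lambda)\bigr)=\varphi(0)=0,
\]
and since $E(\lambda)$ is invertible on $M$ this forces $\varphi(XE(\mu))=0$. On the other hand, $E(\mu)$ acts by zero on $M$ in this case, so $\varphi(X)E(\mu)=0$ as well, and the two sides agree.

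Since these cases exhaust all $\mu\in\Lambda(1)$, the map $\varphi$ is right $\mathcal{A}$-linear, so $\Hom_{\mathcal{A}_w}(\mathcal{H},M)=\Hom_{\mathcal{A}}(\mathcal{H},M)$ as subspaces of $\Hom_C(\mathcal{H},M)$, and the restriction map is the asserted ($\mathcal{H}$-equivariant) isomorphism. I do not anticipate a serious obstacle: the only point needing care is the choice of $\lambda$, which must simultaneously lie in the support of $M$ (so that $E(\lambda)$ acts invertibly) and annihilate $E(\mu)$ on the relevant side (so that the displayed product vanishes), and both requirements hold precisely because $\nu(\lambda)$ is regular and lies in the chamber $w(\Lambda^+(1))$.
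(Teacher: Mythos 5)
Your proposal is correct and follows essentially the same route as the paper: both show that any $\mathcal{A}_w$-linear map $\mathcal{H}\to M$ is automatically $\mathcal{A}$-linear by choosing $\lambda$ with $w^{-1}(\nu(\lambda))$ regular dominant, using that $E(\lambda)\in\mathcal{A}_w$ is invertible on $M$ and that $E(\mu)E(\lambda)=0$ whenever $w^{-1}(\nu(\mu))$ is not dominant. The bookkeeping you add (both Hom-spaces inside $\Hom_C(\mathcal{H},M)$, $\mathcal{H}$-equivariance of the restriction) is harmless and matches the paper's intent.
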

\begin{proof}
Let $\varphi\colon \mathcal{H}\to M$ be an $\mathcal{A}_w$-module homomorphism and we prove that $\varphi$ is $\mathcal{A}$-equivariant.
Fix $\lambda_0\in \Lambda(1)$ such that $w^{-1}(\nu(\lambda_0))$ is dominant and regular.
Since $\supp M = w(\Lambda^+(1))$, $E(\lambda_0)$ is invertible on $M$.
For $\mu\in \Lambda(1)$ such that $w^{-1}(\nu(\mu))$ is not dominant, we have $E(\mu)E(\lambda_0) = 0$.
Hence for $X\in \mathcal{H}$, we have $\varphi(XE(\mu)) = E(\lambda_0)^{-1}\varphi(XE(\mu)E(\lambda_0)) = 0$.
Since $E(\mu) = 0$ on $n_wM$, $E(\mu)\varphi(X) = 0$.
Hence we get $\varphi(XE(\mu)) = 0 = E(\mu)\varphi(X)$.
Therefore $\varphi$ is $\mathcal{A}$-equivariant.
\end{proof}
An element $E(\lambda)$ belongs to
\begin{itemize}
\item $\mathcal{A}_w$ if $\langle \nu(\lambda),w(\alpha)\rangle\ge 0$ for any $\alpha\in\Sigma^+$.
\item $j_J^{-*}(\mathcal{H}_J^-\cap \mathcal{A}_J)$ if $\langle \nu(\lambda),\alpha\rangle \ge 0$ for any $\alpha\in\Sigma^+\setminus\Sigma^+_J$.
\end{itemize}
(The second one follows from the following fact and \cite[Lemma~2.6]{arXiv:1612.01312}: a basis of $\mathcal{H}_J^-\cap \mathcal{A}_J$ is given by $\{E(\lambda)\}$ where $\lambda$ runs through as above~\cite[Lemma~4.2]{arXiv:1406.1003_accepted}.)
Since $w_J(\Sigma^+) = \Sigma^-_J\cup(\Sigma^+\setminus\Sigma_J^+)\supset \Sigma^L\setminus\Sigma_J^+$, we have $\mathcal{A}_{w_J}\subset j_J^{-*}(\mathcal{H}_J^-\cap \mathcal{A}_J)$.
Hence we have
\begin{align*}
\Hom_{\mathcal{A}_{w_J}}(\mathcal{H},n_{w_J}M) & \simeq \Hom_{\mathcal{A}_{w_J}}(\mathcal{H}\otimes_{j_J^{-*}(\mathcal{H}_J^-)}j_J^{-*}(\mathcal{H}_J^-),n_{w_J}M)\\
& \simeq \Hom_{(\mathcal{H}_J^-,j_J^{-*})}(\mathcal{H},\Hom_{\mathcal{A}_{w_J}}(\mathcal{H}_J^-,n_{w_J}M)).
\end{align*}
Since $j_J^{-*}(\mathcal{H}_J^-\cap \mathcal{A}_J)$ contains $\mathcal{A}_{w_J}$, we have $\mathcal{A}_{w_J}\hookrightarrow \mathcal{H}_J^-\cap \mathcal{A}_J\hookrightarrow \mathcal{A}_J$.
More precisely, $\mathcal{A}_{w_J}\hookrightarrow \mathcal{A}_{J,w_J}$ via $E(\lambda)\mapsto E^J(\lambda)$.
(If $E(\lambda)\in \mathcal{A}_{w_J}$, then $w_J^{-1}(\nu(\lambda))$ is dominant with respect to $\Delta$, hence it is also dominant with respect to $J$. Therefore $E^J(\lambda)\in \mathcal{A}_{J,w_J}$.)
\begin{lem}\label{lem:extends to A_J}
We regard $\mathcal{A}_{w_J}$ as a subalgebra of $\mathcal{A}_J$ via the above embedding.
Then $n_{w_J}M$ is uniquely extended to $A_J$, namely there exists a unique $\mathcal{A}_J$-module $M_J$ such that $\supp M_J = \Lambda^+(1)_J$ and $n_{w_J}M_J|_{\mathcal{A}_{w_J}} = n_{w_J}M|_{\mathcal{A}_{w_J}}$.
\end{lem}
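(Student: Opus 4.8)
The plan is to show that, after inverting one suitably chosen central element, the three algebras $\mathcal{A}_{w_J}$, $\mathcal{A}_{J,w_J}$ and $\mathcal{A}_J$ all collapse to the same ring $C[\Lambda(1)]$, and then to transport $n_{w_J}M$ along these localizations. To set it up, fix $\lambda_0\in\Lambda_S(1)$ with $\nu(\lambda_0)$ regular dominant, as in the proof of Lemma~\ref{lem:decomposition along support}, and put $\lambda_{w_J}=n_{w_J}\lambda_0n_{w_J}^{-1}$. Since $n_{w_J}\in N_G(S)$ normalizes $S$ and $\Lambda_S(1)$ is central in $\Lambda(1)$ (because $S$ is central in $Z$), the element $\lambda_{w_J}$ again lies in $\Lambda_S(1)$ and is central in $\Lambda(1)$; hence $E(\lambda_{w_J})$ is central in $\mathcal{A}$ and $E^J(\lambda_{w_J})$ is central in $\mathcal{A}_J$ by Remark~\ref{rem:inverse of E(lambda)}. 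Moreover $\nu(\lambda_{w_J})=w_J(\nu(\lambda_0))$ is regular and lies in $w_J(\mathcal{C}^+)$, where $\mathcal{C}^+$ denotes the dominant Weyl chamber; concretely $\langle\nu(\lambda_{w_J}),\alpha\rangle<0$ for $\alpha\in\Sigma_J^+$ and $\langle\nu(\lambda_{w_J}),\alpha\rangle>0$ for $\alpha\in\Sigma^+\setminus\Sigma_J^+$. In particular $E(\lambda_{w_J})\in\mathcal{A}_{w_J}$, and its image under $\mathcal{A}_{w_J}\hookrightarrow\mathcal{A}_J$ is $E^J(\lambda_{w_J})\in\mathcal{A}_{J,w_J}$.

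Next I record the localizations. Because $\nu(\lambda_{w_J})$ is a regular point of the cone $w_J(\mathcal{C}^+)$, for every $\mu\in\Lambda(1)$ one has $\nu(\mu)+n\nu(\lambda_{w_J})\in w_J(\mathcal{C}^+)$ for $n\gg0$; together with the algebra embedding $\bigoplus_{\nu(\lambda)\in w_J(\mathcal{C}^+)}CE(\lambda)\hookrightarrow C[\Lambda(1)]$, $E(\lambda)\mapsto\tau_\lambda$, of Section~\ref{sec:Preliminaries}, this gives $\mathcal{A}_{w_J}[E(\lambda_{w_J})^{-1}]\cong C[\Lambda(1)]$. On the other hand, regularity of $\nu(\lambda_{w_J})$ makes multiplication by $E^J(\lambda_{w_J})$ annihilate $E^J(\mu)$ whenever $\nu(\mu)$ is not $L_J$-anti-dominant, so $\mathcal{A}_J[E^J(\lambda_{w_J})^{-1}]=\mathcal{A}_{J,w_J}[E^J(\lambda_{w_J})^{-1}]\cong C[\Lambda(1)]$ by the same argument inside $L_J$, and these two isomorphisms are compatible with $\mathcal{A}_{w_J}\hookrightarrow\mathcal{A}_J$. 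In module terms: (a) an $\mathcal{A}_J$-module $N$ satisfies $\supp N=w_J(\Lambda^+(1)_J)$ if and only if $E^J(\lambda_{w_J})$ acts invertibly on it — invertibility forces $E^J(\mu)=0$ when $\nu(\mu)$ is not $L_J$-anti-dominant and forces $E^J(\mu)$ invertible otherwise — so such $N$ are exactly the $C[\Lambda(1)]$-modules; and (b) by the rule $\supp(n_wM)=wv(\Lambda^+(1))$ when $\supp M=v(\Lambda^+(1))$, one gets $\supp(n_{w_J}M)=w_J(\Lambda^+(1))$, hence every $E(\lambda)\in\mathcal{A}_{w_J}$, and in particular $E(\lambda_{w_J})$, acts invertibly on $n_{w_J}M$.

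Now I assemble the argument. By (b), $(n_{w_J}M)|_{\mathcal{A}_{w_J}}$ is a module over $\mathcal{A}_{w_J}[E(\lambda_{w_J})^{-1}]\cong C[\Lambda(1)]$; let $P$ be the corresponding $C[\Lambda(1)]$-module, and let $N$ be the $\mathcal{A}_J$-module attached to $P$ by (a). Compatibility of the two localizations gives $N|_{\mathcal{A}_{w_J}}=(n_{w_J}M)|_{\mathcal{A}_{w_J}}$, while $\supp N=w_J(\Lambda^+(1)_J)$; letting $M_J$ be the $\mathcal{A}_J$-module with $n_{w_J}M_J=N$, the same support rule yields $\supp M_J=\Lambda^+(1)_J$, and then $n_{w_J}M_J|_{\mathcal{A}_{w_J}}=n_{w_J}M|_{\mathcal{A}_{w_J}}$, which is the existence claim. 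For uniqueness, if $M_J'$ is another such module then $E^J(\lambda_{w_J})$ acts invertibly on $n_{w_J}M_J'$, so $E^J(\mu)$ acts as $0$ when $\nu(\mu)$ is not $L_J$-anti-dominant and otherwise as $E^J(\mu\lambda_{w_J}^n)E^J(\lambda_{w_J})^{-n}$ for $n\gg0$, where $E^J(\mu\lambda_{w_J}^n)$ and $E^J(\lambda_{w_J})$ both lie in the image of $\mathcal{A}_{w_J}$; thus the whole $\mathcal{A}_J$-action on $n_{w_J}M_J'$ is determined by $(n_{w_J}M_J')|_{\mathcal{A}_{w_J}}=(n_{w_J}M)|_{\mathcal{A}_{w_J}}$, whence $n_{w_J}M_J'=N$ and $M_J'=M_J$.

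I expect the main obstacle to be the second paragraph: verifying that the weight cone defining $\mathcal{A}_{w_J}$ inside $\mathcal{A}_J$ — dominant on $\Sigma^+\setminus\Sigma_J^+$ and anti-dominant on $\Sigma_J^+$ — together with the single regular central element $E^J(\lambda_{w_J})$ recovers all of $\mathcal{A}_J$ after localization, with the two resulting copies of $C[\Lambda(1)]$ canonically identified, and that the extended $\mathcal{A}_J$-action constructed above is well defined (independent of the auxiliary exponent $n$ and compatible with the multiplication of $\mathcal{A}_J$). Once that is in place, existence and uniqueness are formal, as indicated above.
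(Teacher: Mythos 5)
Your argument is correct, and it is a localization argument in the same spirit as the paper's, but with a genuinely different choice of localizing element and hence a different intermediate step. The paper takes $\lambda_0\in\Lambda_S(1)$ with $\langle\nu(\lambda_0),\alpha\rangle=0$ for $\alpha\in\Sigma_J^+$ and $>0$ on $\Sigma^+\setminus\Sigma_J^+$; then $E^J(\lambda_0)$ is already invertible in $\mathcal{A}_J$ itself (Remark~\ref{rem:inverse of E(lambda)} applied inside $L_J$), and one checks directly that $\mathcal{A}_{w_J}[E(\lambda_0)^{-1}]\xrightarrow{\sim}\mathcal{A}_{J,w_J}$, so $n_{w_J}M$ extends uniquely to $\mathcal{A}_{J,w_J}$ and is then extended by zero to all of $\mathcal{A}_J$, uniqueness of that last step being immediate from the support condition. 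You instead invert the regular element $E(\lambda_{w_J})$, $\lambda_{w_J}=n_{w_J}\lambda_0 n_{w_J}^{-1}$ with $\nu(\lambda_0)$ regular dominant, which collapses both $\mathcal{A}_{w_J}$ and (after killing the terms annihilated by it) $\mathcal{A}_J$ to $C[\Lambda(1)]$, and you transport the module through this common localization. The price of your choice is exactly the point you flag: you must verify that invertibility of the single central element $E^J(\lambda_{w_J})$ on an $\mathcal{A}_J$-module is equivalent to $\supp=w_J(\Lambda^+(1)_J)$ (zero on the non-anti-dominant part, invertible on the anti-dominant part), and that the two copies of $C[\Lambda(1)]$ are compatibly identified; your sketches of these checks are sound, using centrality of $\Lambda_S(1)$ and the product rule for the $E$-basis, and the regularity of $\nu(\lambda_{w_J})$ to write any $\tau_\mu$ as $\tau_{\mu\lambda_{w_J}^n}\tau_{\lambda_{w_J}}^{-n}$. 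What your route buys is a cleaner conceptual statement (modules with the prescribed support are exactly $C[\Lambda(1)]$-modules, which is also the mechanism used elsewhere in Section~\ref{subsec:Reduction to w=w_J}), at the cost of one extra equivalence to verify; the paper's $J$-orthogonal choice avoids that verification because the element is invertible in $\mathcal{A}_J$ on the nose and the extension-by-zero step is trivially unique. One small point to make explicit if you write this up: when you pass from $N$ back to $M_J$ via $M_J=n_{w_J}^{-1}N$, the untwisting is by conjugation by the fixed lift $n_{w_J}$, so $n_{w_J}M_J=N$ holds exactly and $\supp M_J=\Lambda^+(1)_J$ follows from the support rule for twists.
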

\begin{proof}
First we prove that $n_{w_J}M$ is uniquely extended to $\mathcal{A}_{J,w_J}$.
Take $\lambda_0\in \Lambda_S(1)$ such that
\begin{itemize}
\item $\langle \nu(\lambda_0),\alpha\rangle = 0$ for all $\alpha\in\Sigma_J^+$.
\item $\langle \nu(\lambda_0),\alpha\rangle > 0$ for all $\alpha\in\Sigma^+\setminus \Sigma_J^+$.
\end{itemize}
Note that $w_J(\Sigma_J^+) = \Sigma_J^-$ and $w_J(\Sigma^+\setminus\Sigma_J^+) = \Sigma^+\setminus\Sigma_J^+$.
Hence we have $\lambda_0\in w_J(\Lambda^+(1))$, $E^J(\lambda_0)$ is central in $\mathcal{A}_{J,w_J}$ (since $\lambda_0\in \Lambda_S(1)$ is central in $\Lambda(1)$) and $E^J(\lambda_0)$ is invertible by the first condition and Remark~\ref{rem:inverse of E(lambda)}.
The embedding $\mathcal{A}_{w_J}\hookrightarrow\mathcal{A}_{J,w_J}$ induces $\mathcal{A}_{w_J}[E(\lambda_0)^{-1}]\hookrightarrow\mathcal{A}_{J,w_J}$.
We prove that this is surjective.
Let $E^J(\mu)\in \mathcal{A}_{J,w_J}$.
Then we have $\langle w_J(\nu(\mu)),\alpha\rangle\ge 0$ for any $\alpha\in\Sigma_J^+$.
Therefore, for sufficiently large $n\in\Z_{>0}$, we have $\lambda_0^n\mu\in w_J(\Lambda^+(1))$.
The elements $\nu(\lambda_0)$ and $\nu(\mu)$ are in the same closed Weyl chamber $w_J\nu(\Lambda^+(1)_J)$ with respect to $J$.
Hence $E^J(\lambda_0^n)E^J(\mu) = E^J(\lambda_0^n\mu)$ which is in the image of $\mathcal{A}_{w_J}\hookrightarrow \mathcal{A}_{J,w_J}$.
Therefore $\mathcal{A}_{w_J}[E(\lambda_0)^{-1}]\hookrightarrow\mathcal{A}_{J,w_J}$ is surjective.
Now we get the lemma since $E(\lambda_0)$ is invertible on $n_{w_J}M$. (Recall that $\supp n_{w_J}M = w_J(\Lambda^+(1))$ and $\lambda_0\in w_J(\Lambda^+(1))$.)

So we have the extension $N_J$ of $n_{w_J}M$ to $\mathcal{A}_{J,w_J}$.
Define the action of $E^J(\lambda)$ on $N_J$ by zero for $\lambda\in \Lambda(1)\setminus w_J(\Lambda^+(1)_J)$.
Then $N_J$ is an $\mathcal{A}_J$-module such that $\supp N_J = w_J(\Lambda^+(1)_J)$ which is desired.
From the definition of the support, this is the only way to extend the module $N_J$ to $\mathcal{A}_J$.
We get the lemma.
\end{proof}

Take $M_J$ as in the lemma.
We have
\begin{align*}
\Hom_{\mathcal{A}_{w_J}}(\mathcal{H},n_{w_J}M) 
& \simeq \Hom_{(\mathcal{H}_{J}^-,j_J^{-*})}(\mathcal{H},\Hom_{\mathcal{A}_{w_J}}(\mathcal{H}_J^-,n_{w_J}M))\\
& \simeq \Hom_{(\mathcal{H}_{J}^-,j_J^{-*})}(\mathcal{H},\Hom_{\mathcal{A}_{w_J}}(\mathcal{H}_J^-,n_{w_J}M_J)).
\end{align*}
\begin{lem}
The homomorphisms
\[
\Hom_{\mathcal{A}_J}(\mathcal{H}_J,n_{w_J}M_J)\to
\Hom_{\mathcal{A}_{w_J}}(\mathcal{H}_J,n_{w_J}M_J)\to
\Hom_{\mathcal{A}_{w_J}}(\mathcal{H}_J^-,n_{w_J}M_J)
\]
are both isomorphisms.
\end{lem}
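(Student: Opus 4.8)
The plan is to establish the two isomorphisms separately; in both cases the mechanism is that a carefully chosen element of $\mathcal{A}_{w_J}$ which is central in $\mathcal{H}_J$ and acts invertibly on $n_{w_J}M_J$ lets one pass freely between the various coefficient algebras. Write $N=n_{w_J}M_J$ and let $\lambda_0\in\Lambda_S(1)$ be the element fixed in the proof of Lemma~\ref{lem:extends to A_J}, so that $\langle\nu(\lambda_0),\alpha\rangle=0$ for $\alpha\in\Sigma_J^+$ while $\langle\nu(\lambda_0),\alpha\rangle>0$ for $\alpha\in\Sigma^+\setminus\Sigma_J^+$. Since $w_J$ fixes $\nu(\lambda_0)$ and $\nu(\lambda_0)$ is dominant, $E(\lambda_0)\in\mathcal{A}_{w_J}$; its image $E^J(\lambda_0)$ under $\mathcal{A}_{w_J}\hookrightarrow\mathcal{A}_{J,w_J}\subset\mathcal{A}_J$ lies in $\mathcal{H}_J^-\cap\mathcal{A}_J$, is invertible on $N$ (because $\lambda_0\in\Lambda^+(1)_J$ is central in $\Lambda(1)$ and $\supp M_J=\Lambda^+(1)_J$), and is central in $\mathcal{H}_J$: by Lemma~\ref{lem:description of z_lambda}(2) applied to $L_J$ the conjugacy orbit of $\lambda_0$ reduces to $\{\lambda_0\}$ (as $\nu(\lambda_0)$ is fixed by $W_{0,J}$), so $E^J(\lambda_0)=z_{\lambda_0}$ belongs to the center of $\mathcal{H}_J$ (\cite[Theorem~1.2]{MR3271250} applied to $L_J$).

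For the first homomorphism I would factor it as $\Hom_{\mathcal{A}_J}(\mathcal{H}_J,N)\to\Hom_{\mathcal{A}_{J,w_J}}(\mathcal{H}_J,N)\to\Hom_{\mathcal{A}_{w_J}}(\mathcal{H}_J,N)$. The first arrow is an isomorphism by Lemma~\ref{lem:Switch from A to A_w} applied to $L_J$ and $w_J\in W_{0,J}$, which is legitimate because $\supp N=w_J(\Lambda^+(1)_J)$ as an $\mathcal{A}_J$-module. For the second arrow, recall from the proof of Lemma~\ref{lem:extends to A_J} that $\mathcal{A}_{J,w_J}=\mathcal{A}_{w_J}[E^J(\lambda_0)^{-1}]$ inside $\mathcal{A}_J$; since $E^J(\lambda_0)$ is central and invertible on $N$, an $\mathcal{A}_{w_J}$-linear map $\varphi\colon\mathcal{H}_J\to N$ is automatically $\mathcal{A}_{J,w_J}$-linear, as writing $a=a'E^J(\lambda_0)^{-n}$ with $a'\in\mathcal{A}_{w_J}$ gives $\varphi(Xa)E^J(\lambda_0)^n=\varphi(Xa')=\varphi(X)a'=\varphi(X)aE^J(\lambda_0)^n$, and one cancels $E^J(\lambda_0)^n$. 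Injectivity of both arrows is immediate.

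For the second homomorphism I would use that $\mathcal{H}_J$ is a localization of $\mathcal{H}_J^-$ at $E^J(\lambda_0)$. Indeed, by the structure theory of the positive and negative subalgebras (\cite{MR3484112,arXiv:1703.04921}) $\mathcal{H}_J$ is free over $\mathcal{A}_J$ on a finite set on which $\mathcal{H}_J^-$ is the corresponding $(\mathcal{H}_J^-\cap\mathcal{A}_J)$-submodule, and $\mathcal{A}_J=(\mathcal{H}_J^-\cap\mathcal{A}_J)[E^J(\lambda_0)^{-1}]$ by an argument identical to the one in the proof of Lemma~\ref{lem:extends to A_J}; as $E^J(\lambda_0)$ is central in $\mathcal{H}_J$, this gives $\mathcal{H}_J=\mathcal{H}_J^-[E^J(\lambda_0)^{-1}]$, so that $XE^J(\lambda_0)^n\in\mathcal{H}_J^-$ for every $X\in\mathcal{H}_J$ and all $n\gg0$. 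Granting this, restriction along $\mathcal{H}_J^-\hookrightarrow\mathcal{H}_J$ is a bijection $\Hom_{\mathcal{A}_{w_J}}(\mathcal{H}_J,N)\to\Hom_{\mathcal{A}_{w_J}}(\mathcal{H}_J^-,N)$: it is injective since a map $\varphi$ killing $\mathcal{H}_J^-$ has $\varphi(X)E^J(\lambda_0)^n=\varphi(XE^J(\lambda_0)^n)=0$ with $E^J(\lambda_0)$ invertible on $N$, and it is surjective since an $\mathcal{A}_{w_J}$-linear $\psi\colon\mathcal{H}_J^-\to N$ extends to $\mathcal{H}_J$ by $\widetilde\psi(X)=\psi(XE^J(\lambda_0)^n)E^J(\lambda_0)^{-n}$ for $n\gg0$, which is well defined and $\mathcal{A}_{w_J}$-linear precisely because $E^J(\lambda_0)$ is central in $\mathcal{H}_J$, lies in $\mathcal{A}_{w_J}$, and acts invertibly on $N$.

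The step I expect to be the main obstacle is pinning down the structural identity $\mathcal{H}_J=\mathcal{H}_J^-[E^J(\lambda_0)^{-1}]$ used above, i.e. extracting from \cite{MR3484112,arXiv:1703.04921} (or proving directly) that $\mathcal{H}_J$ and its subalgebra $\mathcal{H}_J^-$ admit compatible free presentations over $\mathcal{A}_J$ and over $\mathcal{H}_J^-\cap\mathcal{A}_J$ with a common basis, together with the centrality of $E^J(\lambda_0)$ in $\mathcal{H}_J$; once this is in hand, everything else is a formal manipulation of Hom-spaces resting on Lemma~\ref{lem:Switch from A to A_w} and on computations already carried out for Lemma~\ref{lem:extends to A_J}.
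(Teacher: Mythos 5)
Your argument is correct and follows essentially the same route as the paper: the first map is treated by the argument of Lemma~\ref{lem:Switch from A to A_w} (you merely make explicit the intermediate passage through $\mathcal{A}_{J,w_J}=\mathcal{A}_{w_J}[E^J(\lambda_0)^{-1}]$ from the proof of Lemma~\ref{lem:extends to A_J}), and the second by inverting the central element $E^J(\lambda_0)$, which acts invertibly on $n_{w_J}M_J$. The structural identity $\mathcal{H}_J=\mathcal{H}_J^-[E^J(\lambda_0)^{-1}]$ that you flag as the main obstacle need not be reproved: it is exactly \cite[Proposition~2.5]{arXiv:1612.01312}, which is the reference the paper invokes at this point.
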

\begin{proof}
The first one is isomorphism by the similar argument in the proof of Lemma~\ref{lem:Switch from A to A_w}.

Take $\lambda_0\in \Lambda(1)$ such that 
\begin{itemize}
\item $\lambda_0\in Z(W_J(1))$.
\item $\langle \nu(\lambda_0), \alpha\rangle > 0$ for any $\alpha\in\Sigma^+\setminus\Sigma^+_J$.
\end{itemize}
Then $\mathcal{H}_J = \mathcal{H}_J^-[E^J(\lambda_0)^{-1}]$ \cite[Proposition~2.5]{arXiv:1612.01312}.
Since $E^J(\lambda_0)$ is invertible in $\mathcal{A}_J$, it is also invertible on $n_{w_J}M_J$. (Note that $n_{w_J}M_J$ is an $\mathcal{A}_J$-module.)
Hence the second homomorphism is an isomorphism.
\end{proof}
Therefore we get
\[
\Hom_{\mathcal{A}}(\mathcal{H},n_{w_J}M)\simeq \Hom_{(\mathcal{H}_J^-,j_J^{-*})}(\mathcal{H},\Hom_{\mathcal{A}_J}(\mathcal{H}_J,n_{w_J}M_J)).
\]
\begin{lem}
Let $X$ be an $\mathcal{H}_J$-module and assume that $X\to X\otimes_{\mathcal{H}_J}\cInd_{I(1)_J}^{L_J}\trivrep$ is injective.
Then for $Y = \Hom_{(\mathcal{H}_J^-,j_J^{-*})}(\mathcal{H},X)$, $Y\to Y\otimes_{\mathcal{H}}\cInd_{I(1)}^G\trivrep$ is also injective.
\end{lem}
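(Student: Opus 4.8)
\emph{Proof strategy.} The plan is to deduce the statement for $G$ from the assumed statement for $L_J$ by using that parabolic induction, on the side of $\mathcal{H}$-modules and on the side of smooth representations, is intertwined by the functors $(-)^{I(1)}$ and $(-)\otimes\cInd\trivrep$. Write $\pi = X\otimes_{\mathcal{H}_J}\cInd_{I(1)_J}^{L_J}\trivrep$, a smooth representation of $L_J$. By hypothesis the canonical map $X\to\pi$ is injective; since its image is contained in $\pi^{I(1)_J}$, the induced map $\iota\colon X\hookrightarrow\pi^{I(1)_J}$ is injective as well.

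First I would record the two inputs from the theory of parabolic induction (\cite{arXiv:1612.01312}, \cite{arXiv:1703.04921}). (i) For a smooth representation $\sigma$ of $L_J$ there is a natural isomorphism $(\Ind_{P_J}^G\sigma)^{I(1)}\simeq\Hom_{(\mathcal{H}_J^-,j_J^{-*})}(\mathcal{H},\sigma^{I(1)_J})$ of $\mathcal{H}$-modules, i.e.\ the computation of the pro-$p$-Iwahori invariants of a parabolically induced representation. (ii) For a right $\mathcal{H}_J$-module $X'$ there is a natural isomorphism $\Hom_{(\mathcal{H}_J^-,j_J^{-*})}(\mathcal{H},X')\otimes_{\mathcal{H}}\cInd_{I(1)}^G\trivrep\simeq\Ind_{P_J}^G\bigl(X'\otimes_{\mathcal{H}_J}\cInd_{I(1)_J}^{L_J}\trivrep\bigr)$ of representations of $G$, expressing that $M\mapsto M\otimes_{\mathcal{H}}\cInd_{I(1)}^G\trivrep$ carries Hecke-module parabolic induction (in its $\Hom$-model, equivalently a tensor-product model over $(\mathcal{H}_J^+,j_J^+)$) to representation-theoretic parabolic induction. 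Moreover these are compatible: under (i) and (ii), the canonical map $\Hom_{(\mathcal{H}_J^-,j_J^{-*})}(\mathcal{H},X')\to\Hom_{(\mathcal{H}_J^-,j_J^{-*})}(\mathcal{H},X')\otimes_{\mathcal{H}}\cInd_{I(1)}^G\trivrep$ is identified with $\Hom_{(\mathcal{H}_J^-,j_J^{-*})}(\mathcal{H},-)$ applied to the canonical map $X'\to\bigl(X'\otimes_{\mathcal{H}_J}\cInd_{I(1)_J}^{L_J}\trivrep\bigr)^{I(1)_J}$, followed by the inclusion of $I(1)$-invariants into the whole representation.

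Granting this, the proof is formal. Applying the left exact functor $\Hom_{(\mathcal{H}_J^-,j_J^{-*})}(\mathcal{H},-)$ to the injection $\iota$ and using (i) gives an injection
\[
Y=\Hom_{(\mathcal{H}_J^-,j_J^{-*})}(\mathcal{H},X)\hookrightarrow\Hom_{(\mathcal{H}_J^-,j_J^{-*})}(\mathcal{H},\pi^{I(1)_J})\simeq(\Ind_{P_J}^G\pi)^{I(1)}.
\]
Composing with $(\Ind_{P_J}^G\pi)^{I(1)}\hookrightarrow\Ind_{P_J}^G\pi$ and with the isomorphism $\Ind_{P_J}^G\pi\simeq Y\otimes_{\mathcal{H}}\cInd_{I(1)}^G\trivrep$ of (ii) produces an injection $Y\hookrightarrow Y\otimes_{\mathcal{H}}\cInd_{I(1)}^G\trivrep$, and by the compatibility of the canonical maps this composite is precisely the map $Y\to Y\otimes_{\mathcal{H}}\cInd_{I(1)}^G\trivrep$ in question. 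Hence it is injective.

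The only nontrivial ingredient is the package (i)--(ii) together with the matching of canonical maps; everything else uses nothing beyond left exactness of $\Hom$ and the hypothesis that $X\to\pi$ is injective. I expect (i) to come directly from the invariants computation in \cite{arXiv:1703.04921}, and (ii) from the description of Hecke-module parabolic induction in \cite{arXiv:1612.01312} combined with the bimodule structure of $\cInd_{I(1)}^G\trivrep$; the hard part will be the bookkeeping in the compatibility statement, obtained by tracing the relevant adjunction units through these identifications — fiddly, but involving no new idea.
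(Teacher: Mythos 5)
Your overall strategy --- transferring the injectivity from $L_J$ to $G$ through the compatibility of Hecke-theoretic parabolic induction with representation-theoretic parabolic induction under $(-)^{I(1)}$ and $-\otimes\cInd\trivrep$ --- is exactly the route the paper takes, and the formal part of your argument (left exactness of $\Hom$, composing with the inclusion of invariants, then matching canonical maps) is sound. The genuine gap is in the key inputs (i) and (ii) as you state them: with the same $J$ and no twist they are not what the references provide, and they are false in general. The invariants of $\Ind_{P_J}^G\sigma$ are computed by the \emph{tensor} model $\sigma^{I(1)_J}\otimes_{(\mathcal{H}_J^+,j_J^+)}\mathcal{H}$ (\cite[Proposition~4.4]{arXiv:1703.04921}), whereas the coinduction model $\Hom_{(\mathcal{H}_J^-,j_J^{-*})}(\mathcal{H},X)$ is isomorphic to a tensor-induced module only after replacing $J$ by $J'=-w_\Delta(J)$ and $X$ by its pull-back $X'$ along the isomorphism $\mathcal{H}_J\simeq\mathcal{H}_{J'}$ induced by conjugation by $n_{w_\Delta}n_{w_J}$: one has $Y\simeq X'\otimes_{(\mathcal{H}_{J'}^+,j_{J'}^+)}\mathcal{H}$ (\cite[Proposition~4.15]{arXiv:1406.1003_accepted}). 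Your parenthetical ``equivalently a tensor-product model over $(\mathcal{H}_J^+,j_J^+)$'' already drops this twist. Consequently $Y$ is matched with $\Ind_{P_{J'}}$ of the conjugated representation (equivalently with induction from the \emph{opposite} parabolic $\overline{P_J}$ of the original one), not with $\Ind_{P_J}$; mod $p$, induction from $P_J$ and from $\overline{P_J}$ are not interchangeable, so (i)--(ii) cannot simply be quoted in the form you give. The induction--coinduction comparison with its conjugation twist is a substantive ingredient here, not bookkeeping.

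Once the twist is inserted, your proof becomes the paper's: transport the hypothesis to the $\mathcal{H}_{J'}$-module $X'$, identify $Y\simeq X'\otimes_{(\mathcal{H}_{J'}^+,j_{J'}^+)}\mathcal{H}$, use exactness of $-\otimes_{(\mathcal{H}_{J'}^+,j_{J'}^+)}\mathcal{H}$ (\cite[Proposition~4.1]{MR3437789}; this plays the role of your left-exact $\Hom$) to get an injection into $(X'\otimes_{\mathcal{H}_{J'}}\cInd_{I(1)_{J'}}^{L_{J'}}\trivrep)^{I(1)_{J'}}\otimes_{(\mathcal{H}_{J'}^+,j_{J'}^+)}\mathcal{H}$, identify this with $(\Ind_{P_{J'}}(X'\otimes_{\mathcal{H}_{J'}}\cInd_{I(1)_{J'}}^{L_{J'}}\trivrep))^{I(1)}$ by \cite[Proposition~4.4]{arXiv:1703.04921}, include into the full induced representation, and finally identify that induction with $Y\otimes_{\mathcal{H}}\cInd_{I(1)}^G\trivrep$ by \cite[Corollary~4.7]{arXiv:1703.04921}. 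The compatibility of canonical maps you flag is indeed needed in this chain as well, but the essential piece missing from your write-up is the conjugation step to $J'$ and $X'$.
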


Therefore for the proof of Lemma~\ref{lem:injective, twist form}, it is sufficient to prove that
\[
\Hom_{\mathcal{A}_J}(\mathcal{H}_J,n_{w_J}M_J)\to
\Hom_{\mathcal{A}_J}(\mathcal{H}_J,n_{w_J}M_J)\otimes_{\mathcal{H}_J}\cInd_{I(1)_J}^{L_J}\trivrep
\]
is injective, namely we may assume that $w = w_\Delta$.

\begin{proof}
Set $J' = -w_\Delta(J)$ and put $n = n_{w_\Delta}n_{w_J}$.
Then $l\mapsto nln^{-1}$ gives an isomorphism $L_J\to L_{J'}$ and sends $I(1)_J$ to $I(1)_{J'}$.
Therefore it induces an isomorphism $\mathcal{H}_J\to \mathcal{H}_{J'}$.
Define an $\mathcal{H}_{J'}$-module $X'$ as the pull-back of $X$ by this isomorphism (see \cite{arXiv:1406.1003_accepted}).
Then $X\to X\otimes_{\mathcal{H}_J}\cInd_{I(1)_J}^{L_J}\trivrep$ induces $X'\to X'\otimes_{\mathcal{H}_{J'}}\cInd_{I(1)_{J'}}^{L_{J'}}\trivrep$ and the latter map is also injective.
By \cite[Proposition~4.15]{arXiv:1406.1003_accepted}, we have $Y\simeq X'\otimes_{(\mathcal{H}_{J'},j_{J'}^+)}\mathcal{H}$.
By \cite[Proposition~4.1]{MR3437789}, the functor $(\cdot)\otimes_{(\mathcal{H}_{J'}^+,j_{J'}^+)}\mathcal{H}$ is exact.
Hence, using the assumption in the lemma, the map
\[
Y \simeq X'\otimes_{(\mathcal{H}_{J'}^+,j_{J'}^+)}\mathcal{H}\to (X'\otimes_{\mathcal{H}_{J'}}\cInd_{I(1)_{J'}}^{L_{J'}}\trivrep)^{I(1)_{J'}}\otimes_{(\mathcal{H}_{J'}^+,j_{J'}^+)}\mathcal{H}
\]
is injective.
By \cite[Proposition~4.4]{arXiv:1703.04921}
\[
(X'\otimes_{\mathcal{H}_{J'}}\cInd_{I(1)_{J'}}^{L_{J'}}\trivrep)^{I(1)_{J'}}\otimes_{(\mathcal{H}_{J'}^+,j_{J'}^+)}\mathcal{H}\simeq
(\Ind_{P_{J'}}(X'\otimes_{\mathcal{H}_{J'}}\cInd_{I(1)_{J'}}^{L_{J'}}\trivrep))^{I(1)}.
\]
In particular, 
\[
(X'\otimes_{\mathcal{H}_{J'}}\cInd_{I(1)_{J'}}^{L_{J'}}\trivrep)^{I(1)_{J'}}\otimes_{(\mathcal{H}_{J'}^+,j_{J'}^+)}\mathcal{H}\to
\Ind_{P_{J'}}(X'\otimes_{\mathcal{H}_{J'}}\cInd_{I(1)_{J'}}^{L_{J'}}\trivrep)
\]
is injective.
Finally, by \cite[Corollary~4.7.]{arXiv:1703.04921},
\[
\Ind_{P_{J'}}(X'\otimes_{\mathcal{H}_{J'}}\cInd_{I(1)_{J'}}^{L_{J'}}\trivrep)\simeq
Y\otimes_{\mathcal{H}}\cInd_{I(1)}^G\trivrep.
\]
Combining all of these, we conclude the lemma.
\end{proof}

\subsection{Some more reductions}
We note the following.
\begin{itemize}
\item $j_\emptyset^+(\mathcal{H}_\emptyset^+) = \mathcal{A}_{w_\Delta}$.
\item $j_\emptyset^{-*}(\mathcal{H}_\emptyset^-) = \mathcal{A}_{1}$.
\end{itemize}
This follows from the definition of $\mathcal{H}_+$, $\mathcal{H}_-$ and \cite[Lemma~2.6]{arXiv:1612.01312}.
See the argument in \ref{subsec:Reduction to w=w_Delta}.
By these identities, we regard $\mathcal{A}_1$ and $\mathcal{A}_{w_\Delta}$ as a subalgebra of $\mathcal{H}_\emptyset = \mathcal{A}_\emptyset$.

By Lemma~\ref{lem:Switch from A to A_w}, we have $\Hom_{\mathcal{A}}(\mathcal{H},n_{w_\Delta}M)\simeq \Hom_{\mathcal{A}_{w_\Delta}}(\mathcal{H},n_{w_\Delta}M)$.
By Lemma~\ref{lem:extends to A_J}, there exists an $\mathcal{A}_\emptyset$-module $M_\emptyset$ such that $M|_{\mathcal{A}_{1}}\simeq M_\emptyset|_{\mathcal{A}_{1}}$.
It is easy to see that $n_{w_\Delta}M|_{\mathcal{A}_{w_\Delta}}\simeq n_{w_\Delta}M_\emptyset|_{\mathcal{A}_{w_\Delta}}$.
We have
\begin{align*}
\Hom_{\mathcal{A}}(\mathcal{H},n_{w_\Delta}M) & \simeq
\Hom_{(\mathcal{H}_\emptyset^+,j_\emptyset^+)}(\mathcal{H},n_{w_\Delta}M_\emptyset)\\
& \simeq \Hom_{(\mathcal{H}_\emptyset^-,j_\emptyset^-)}(\mathcal{H},M_\emptyset)\tag*{(\cite[Proposition~4.13]{arXiv:1612.01312})}\\
& \simeq M_\emptyset\otimes_{(\mathcal{H}_\emptyset^-,j_\emptyset^{-*})}\mathcal{H}\tag*{(\cite[Corollary~4.19]{arXiv:1612.01312})}\\
& = M\otimes_{\mathcal{A}_1}\mathcal{H}\tag*{($j_\emptyset^{-*}(\mathcal{H}_\emptyset^-) = \mathcal{A}_1$)}.
\end{align*}
We have $j_\emptyset^{-*}(\mathcal{H}_\emptyset^-) = \mathcal{A}_1$ by \cite[Lemma~2.6]{arXiv:1612.01312}.
\begin{lem}
$M\otimes_{\mathcal{A}_1}\mathcal{H}\simeq M\otimes_{\mathcal{A}}\mathcal{H}$.
\end{lem}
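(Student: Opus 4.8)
The plan is to show that the natural surjection
$M\otimes_{\mathcal{A}_1}\mathcal{H}\to M\otimes_{\mathcal{A}}\mathcal{H}$
induced by the inclusion $\mathcal{A}_1\hookrightarrow\mathcal{A}$ is an isomorphism; since it is visibly surjective, everything reduces to injectivity, and I would deduce this from the fact that the $\mathcal{A}$-action on $M$ is ``concentrated on the dominant chamber''. Concretely, write $I_0=\bigoplus_{\lambda}CE(\lambda)$, the sum over those $\lambda\in\Lambda(1)$ with $\nu(\lambda)$ not dominant, so that $\mathcal{A}=\mathcal{A}_1\oplus I_0$ as a vector space (recall $\mathcal{A}_1$ is a subalgebra of $\mathcal{A}$). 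Since $\supp M=\Lambda^+(1)$, we have $mE(\lambda)=0$ for every $m\in M$ whenever $\nu(\lambda)$ is not dominant, i.e.\ $MI_0=0$, so in particular $M$ is an $\mathcal{A}_1$-module.

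The first step is a purely formal reduction. For a general $a\in\mathcal{A}$, decompose $a=a_1+a_0$ with $a_1\in\mathcal{A}_1$ and $a_0\in I_0$; then $ma_0=0$, so the defining relation $ma\otimes h-m\otimes ah$ of $M\otimes_{\mathcal{A}}\mathcal{H}$ equals $(ma_1\otimes h-m\otimes a_1h)-m\otimes a_0h$, whose first summand is already a defining relation of $M\otimes_{\mathcal{A}_1}\mathcal{H}$. Hence the kernel of $M\otimes_{\mathcal{A}_1}\mathcal{H}\to M\otimes_{\mathcal{A}}\mathcal{H}$ is spanned by the images of the elements $m\otimes E(\lambda)h$ with $m\in M$, $h\in\mathcal{H}$ and $\nu(\lambda)$ not dominant, and it suffices to prove that every such element already vanishes in $M\otimes_{\mathcal{A}_1}\mathcal{H}$.

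This vanishing is the only real point. Fix, as in the proof of Lemma~\ref{lem:decomposition along support}, an element $\lambda_0\in\Lambda_S(1)$ with $\nu(\lambda_0)$ regular dominant; then $E(\lambda_0)\in\mathcal{A}_1$, and since $\supp M=\Lambda^+(1)$ the operator $E(\lambda_0)$ is invertible on $M$, so we may write $m=m'E(\lambda_0)$ for a unique $m'\in M$. As $E(\lambda_0)\in\mathcal{A}_1$ may be moved across the tensor product, in $M\otimes_{\mathcal{A}_1}\mathcal{H}$ we get
\[
m\otimes E(\lambda)h=m'E(\lambda_0)\otimes E(\lambda)h=m'\otimes E(\lambda_0)E(\lambda)h.
\]
Now I claim $E(\lambda_0)E(\lambda)=0$ in $\mathcal{A}$: by the multiplication rule this product is nonzero only if $\nu(\lambda_0)$ and $\nu(\lambda)$ lie in one common closed Weyl chamber $w(\{v:v\text{ dominant}\})$ for some $w\in W_0$; but then $w^{-1}\nu(\lambda_0)$ is dominant, and since the $W_0$-orbit of a regular element meets the dominant cone in a single point this forces $w^{-1}\nu(\lambda_0)=\nu(\lambda_0)$, hence $w=1$ by regularity of $\nu(\lambda_0)$, and therefore $\nu(\lambda)=w^{-1}\nu(\lambda)$ is dominant, contrary to the choice of $\lambda$. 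Thus $E(\lambda_0)E(\lambda)=0$ and $m\otimes E(\lambda)h=m'\otimes 0=0$, which completes the proof.

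I expect the main obstacle to be exactly the geometric assertion in the last step — that a regular dominant element and a non-dominant element of $X_*(S)\otimes_{\Z}\R$ never share a closed Weyl chamber — together with the minor bookkeeping ensuring that $\lambda_0$ can be chosen simultaneously regular and dominant inside $\Lambda_S(1)$; everything else is a formal manipulation of tensor products that goes through precisely because $MI_0=0$.
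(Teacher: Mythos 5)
Your proof is correct. The reduction of the kernel of $M\otimes_{\mathcal{A}_1}\mathcal{H}\to M\otimes_{\mathcal{A}}\mathcal{H}$ to the images of $m\otimes E(\lambda)h$ with $\nu(\lambda)$ non-dominant is valid because $\supp M=\Lambda^+(1)$ forces $MI_0=0$, and the key vanishing $E(\lambda_0)E(\lambda)=0$ for $\nu(\lambda_0)$ regular dominant and $\nu(\lambda)$ non-dominant follows exactly as you say from the multiplication rule for the $E$-basis, since a regular dominant element lies in no closed Weyl chamber other than the dominant one (your phrase ``$\nu(\lambda)=w^{-1}\nu(\lambda)$ is dominant'' is a harmless slip: once $w=1$, $\nu(\lambda)$ itself would be dominant, giving the contradiction). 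The only difference from the paper is that the paper does not argue directly: it simply invokes \cite[Lemma~4.29]{arXiv:1406.1003_accepted}, whose mechanism is the same localization trick you use, namely writing $m=m'E(\lambda_0)$ with $E(\lambda_0)\in\mathcal{A}_1$ invertible on $M$ and pushing $E(\lambda_0)$ across the tensor product; this is also the device used in the proof of Lemma~\ref{lem:Switch from A to A_w}. So your argument is a self-contained substitute for the citation, and it buys a proof that stays entirely within the notation of this paper, while the paper's route buys brevity by outsourcing the computation.
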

\begin{proof}
The same proof as \cite[Lemma~4.29]{arXiv:1406.1003_accepted} can apply.
\end{proof}

Hence we get $\Hom_{\mathcal{A}}(\mathcal{H},n_{w_\Delta}M)\simeq M\otimes_{\mathcal{A}}\mathcal{H}$.
Therefore it is sufficient to prove the following.
\begin{lem}\label{lem:injective, by tensor}
Let $M$ be an $\mathcal{A}$-module such that $\supp M = \Lambda^+(1)$.
Then $M\otimes_{\mathcal{A}}\mathcal{H}\to M\otimes_{\mathcal{A}}\cInd_{I(1)}^G\trivrep$ is injective.
\end{lem}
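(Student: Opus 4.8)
The plan is to compare $\mathcal{H}$ with $\cInd_{I(1)}^G\trivrep$ by realizing both as objects attached to the special parahoric subgroup $K$ fixing $\mathbf{x}_0$, and then to feed in the known structure of such representations. Since $\supp M=\Lambda^+(1)$, the $\mathcal{A}$-action on $M$ is the restriction, along the homomorphism \eqref{eq:hom chi tilde}, of a $C[\Lambda(1)]$-module structure on $M$, so
\[
M\otimes_{\mathcal{A}}\mathcal{H}=M\otimes_{C[\Lambda(1)]}\bigl(C[\Lambda(1)]\otimes_{\mathcal{A}}\mathcal{H}\bigr),\qquad M\otimes_{\mathcal{A}}\cInd_{I(1)}^G\trivrep=M\otimes_{C[\Lambda(1)]}\bigl(C[\Lambda(1)]\otimes_{\mathcal{A}}\cInd_{I(1)}^G\trivrep\bigr).
\]
Hence it suffices to prove that the base-changed map $C[\Lambda(1)]\otimes_{\mathcal{A}}\mathcal{H}\to C[\Lambda(1)]\otimes_{\mathcal{A}}\cInd_{I(1)}^G\trivrep$ is a \emph{split} injection of $C[\Lambda(1)]$-modules; tensoring a split injection with the arbitrary module $M$ over $C[\Lambda(1)]$ preserves injectivity.

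By transitivity of compact induction, $\cInd_{I(1)}^G\trivrep\simeq\cInd_K^G V$ with $V=\cInd_{I(1)}^K\trivrep=C[K/I(1)]$, a finite-dimensional representation of $K$ inflated from the finite reductive quotient $\mathsf{G}$ of $K$; concretely $V=C[\mathsf{G}/\mathsf{U}]$, where $\mathsf{U}$ is the unipotent radical of the Borel $\mathsf{B}\subset\mathsf{G}$ that is the image of $I$, and under this identification the inclusion $\mathcal{H}=(\cInd_{I(1)}^G\trivrep)^{I(1)}\hookrightarrow\cInd_{I(1)}^G\trivrep$ becomes $(\cInd_K^G V)^{I(1)}\hookrightarrow\cInd_K^G V$. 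By \cite{MR3666048}, both $C[\Lambda(1)]\otimes_{\mathcal{A}}\mathcal{H}$ and $C[\Lambda(1)]\otimes_{\mathcal{A}}\cInd_{I(1)}^G\trivrep$, together with the map between them, can be expressed in terms of $\cInd_K^G V$ and its spherical Hecke algebra $\End_G(\cInd_K^G V)$. Because $|\mathsf{T}|$ is prime to $p$, the group algebra $C[\mathsf{T}]$ is semisimple, so $V=C[\mathsf{G}/\mathsf{U}]=\bigoplus_\chi\Ind_{\mathsf{B}}^{\mathsf{G}}\chi$ decomposes as a direct sum over the characters $\chi$ of $\mathsf{T}$ of characteristic-$p$ principal series of $\mathsf{G}$; this reduces the problem, summand by summand, to $\cInd_K^G W$ for $W$ an irreducible representation of $\mathsf{G}$.

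For such $W$, by \cite{MR3600042} (and \cite{MR3666048}) one has: $\mathcal{Z}_W=\End_G(\cInd_K^G W)$ is a polynomial ring, $\cInd_K^G W$ is free over $\mathcal{Z}_W$, the Satake--Bernstein comparison identifies the image of $C[\Lambda(1)]$ in $\End_C(\cInd_K^G W)$ with a localization of $\mathcal{Z}_W$, and $(\cInd_K^G W)^{I(1)}$ is described by the ``change of weight'' maps, which are injective and compatible with a $\mathsf{B}$-equivariant splitting of $W^{I(1)}\hookrightarrow W$. Putting these together, I expect to conclude that $C[\Lambda(1)]\otimes_{\mathcal{A}}\cInd_{I(1)}^G\trivrep$ is free over $C[\Lambda(1)]$ and that the base-changed map identifies $C[\Lambda(1)]\otimes_{\mathcal{A}}\mathcal{H}$ with a $C[\Lambda(1)]$-direct summand of it, giving the required split injection.

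The main obstacle is exactly this last step: extracting from \cite{MR3600042} a description of $\cInd_K^G W$ as a module over its spherical algebra, and of the Satake--Bernstein dictionary, precise enough to see that after base change to $C[\Lambda(1)]$ everything becomes free with the $I(1)$-invariants splitting off. This is also where the ``far from supersingular'' hypothesis enters: it is precisely the passage to $C[\Lambda(1)]$ via \eqref{eq:hom chi tilde} — available because $\supp M=\Lambda^+(1)$, i.e. because $z_{\lambda_0}$ is invertible — that places us in the range where $\cInd_K^G W$ is free over the relevant algebra. The remaining ingredients (transitivity of compact induction, the $\mathsf{T}$-decomposition of $V$, and the reduction to irreducible $W$) are formal.
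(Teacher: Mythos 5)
Your opening reduction coincides with the paper's: since $\supp M=\Lambda^+(1)$, the $\mathcal{A}$-action on $M$ is pulled back along \eqref{eq:hom chi tilde} from a $C[\Lambda(1)]$-action, so the problem is transported to the universal modules $C[\Lambda(1)]\otimes_{\mathcal{A}}(-)$, and bringing in $\cInd_K^GV$ and the change-of-weight results of \cite{MR3666048} is indeed the engine of the argument. But the step you yourself call the ``main obstacle'' is the entire content of the lemma, and the facts you hope to quote there are either false or not available. In characteristic $p$ the principal series $\Ind_{\mathsf B}^{\mathsf G}\chi$ of the finite reductive quotient is not semisimple, so $C[\mathsf G/\mathsf U]$ is not a direct sum of irreducibles and there is no ``summand by summand'' reduction to irreducible $W$; freeness of $\cInd_K^GW$ over its spherical Hecke algebra is neither proved in \cite{MR3600042,MR3666048} nor used in the paper; $C[\Lambda(1)]$ does not act on $\cInd_K^GW$ before one localizes along the Satake image $C[\Lambda^+(1)]_\omega$; and even granting freeness of the target, a submodule of a free $C[\Lambda(1)]$-module need not be a direct summand, so no splitting follows from freeness alone. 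The proposal therefore stops exactly where the real work begins.

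For comparison, the paper proceeds differently at two points. First, it decomposes $M$ along the $\Lambda(1)$-orbits $\omega$ in $\widehat{Z}_\kappa$ and, via the twists of \cite[Theorem~3.13]{arXiv:1406.1003_accepted}, parabolic induction \cite{arXiv:1703.04921} and induction on $\dim G$, reduces to the case where every $\psi\in\omega$ is trivial on $Z_\kappa\cap\Lambda'_\alpha(1)$ (Lemma~\ref{lem:injective lemma, final form}); this is needed even to define the elements $\tau_\alpha$ used below. Second, it never analyses $C[\Lambda(1)]\otimes_{\mathcal{A}}\cInd_{I(1)}^G\trivrep$ itself: since $X_\Delta=C[\Lambda(1)]_\omega\otimes_{\mathcal{A}}\mathcal{H}\to\pi_\Delta=C[\Lambda(1)]_\omega\otimes_{C[\Lambda^+(1)]_\omega}\cInd_K^GV_\Delta$ (with $V_\Delta$ built from irreducible $K$-representations) factors through $X_\Delta\otimes_{\mathcal{H}}\cInd_{I(1)}^G\trivrep$, it suffices to prove that $M\otimes_{C[\Lambda(1)]_\omega}X_\Delta\to M\otimes_{C[\Lambda(1)]_\omega}\pi_\Delta$ is injective. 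After identifying $\pi_\emptyset\simeq\Ind_{\overline{B}}^GC[\Lambda(1)]_\omega$ \cite{MR3001801} and $X_J\simeq\pi_J^{I(1)}$ via change of weight, both sides are filtered by open subsets $A\subset W_0$ of Bruhat cells, and the key computations are $\pi_{\Delta,A}/\pi_{\Delta,A'}=c_w(\pi_{\emptyset,A}/\pi_{\emptyset,A'})$ and $X_{\Delta,A}/X_{\Delta,A'}=c_w(X_{\emptyset,A}/X_{\emptyset,A'})$ with $c_w=\prod_{w^{-1}(\alpha)>0}(1-\tau_\alpha)$ a non-zero-divisor (resting on \cite[IV.7, IV.9, V.8]{MR3600042}): the graded pieces are free over $C[\Lambda(1)]_\omega$, the filtration steps split, and the inclusion of graded pieces has an explicit section, so injectivity survives $\otimes_{C[\Lambda(1)]_\omega}M$ and follows by induction on $\#A$. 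This filtration-and-$c_w$ argument is the missing idea; without it, or a genuine substitute, your claimed split injection is an unproved assertion.
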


The group algebra $C[Z_\kappa]$ is a subalgebra of $\mathcal{A}$ via the map $t\mapsto T_t = E(t)$ for $t\in Z_\kappa$.
Let $\widehat{Z}_\kappa$ denote the set of characters of $Z_\kappa$.
Since the order of $Z_\kappa$ is prime to $p$, $M$ is semisimple as a $C[Z_\kappa]$-module.
Let $\psi\in Z_\kappa$ and set $M_\psi = \{m\in M\mid mT_t = \psi(t)m\ (t\in Z_\kappa)\}$.
Since $Z_\kappa$ is normal in $\Lambda(1)$, the conjugate action of $\Lambda(1)$ on $Z_\kappa$ induces the action on $\widehat{Z}_\kappa$.
The formula $E(\lambda)T_t = T_{\lambda t\lambda^{-1}}E(\lambda)$ implies that $M_\psi E(\lambda) \subset M_{\lambda^{-1}(\psi)}$.
For an orbit $\omega$ of this action in $\widehat{Z}_\kappa$, we put $M_\omega = \bigoplus_{\psi\in\omega}M_\psi$.
Then $M_\omega$ is stable under the $\mathcal{A}$-action and we have $M = \bigoplus_\omega M_\omega$.
Therefore we may assume that $M = M_\omega$ for some $\omega$ to prove Lemma~\ref{lem:injective, by tensor}.

Let $\alpha\in\Delta$ and consider the image of $Z\cap L_{\{\alpha\}}\cap G'$ in $\Lambda(1)$.
We denote this subgroup by $\Lambda'_\alpha(1)$.
Consider the following condition: $\psi$ is trivial on $Z_\kappa\cap \Lambda'_\alpha(1)$.
Since $Z_\kappa\cap \Lambda'_\alpha(1)$ is normal in $\Lambda(1)$, for $t\in Z_\kappa\cap \Lambda'_\alpha(1)$ and $\lambda\in \Lambda(1)$, we have $(\lambda\psi)(t) = \psi(\lambda^{-1}t\lambda) = 1$ if $\psi$ satisfies this condition.
Hence this condition only depends on $\Lambda(1)$-orbit.

Assume that $\omega$ is a $\Lambda(1)$-orbit in $\widehat{Z}_\kappa$ and we also assume that $\psi$ is not trivial on $Z_\kappa\cap \Lambda'_\alpha(1)$ for some (equivalently any) $\psi\in\omega$.
Then by \cite[Theorem~3.13]{arXiv:1406.1003_accepted}, we have $M\otimes_{\mathcal{A}}\mathcal{H}\simeq n_{s_\alpha}M\otimes_{\mathcal{A}}\mathcal{H}$.
In this case, as we have seen before, Lemma~\ref{lem:injective, by tensor} follows from Lemma~\ref{lem:injective, by tensor} for a proper Levi subgroup.
Therefore we may assume that there is no such $\alpha$ by induction on $\dim G$.
Hence it is sufficient to prove the following to prove Lemma~\ref{lem:injective, by tensor}.
\begin{lem}\label{lem:injective lemma, final form}
Let $M$ be an $\mathcal{A}$-module such that $\supp(M) = \Lambda^+(1)$ and $Z_\kappa\cap \Lambda'_\alpha(1)$ acts trivially on $M$ for all $\alpha\in\Delta$.
Then $M\otimes_{\mathcal{A}}\mathcal{H}\to M\otimes_{\mathcal{A}}\cInd_{I(1)}^G\trivrep$ is injective.
\end{lem}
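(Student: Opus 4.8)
The plan is to reinterpret both sides of the map through a compact induction $\cInd_K^G V$ from the special parahoric $K$, and to read off injectivity from the known structure of this $G$-representation. To construct $V$: decompose $M=\bigoplus_\psi M_\psi$ as a $C[Z_\kappa]$-module (possible since $|Z_\kappa|$ is prime to $p$), and recall that by the reductions already made we may assume the occurring $\psi$ form a single $\Lambda(1)$-orbit, all trivial on every $Z_\kappa\cap\Lambda'_\alpha(1)$, $\alpha\in\Delta$. In this remaining case the $C[Z_\kappa]$-structure of $M$ assembles into a genuine smooth representation $V=\bigoplus_\psi M_\psi\otimes F(\psi)$ of $K$, where $F(\psi)$ is the irreducible $C$-representation of the finite reductive quotient $\overline{K}$ with the appropriate highest weight, inflated to $K$; thus $V^{I(1)\cap K}\cong M$ canonically as vector spaces. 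The triviality hypothesis is precisely what is needed for $V$ to be built in this way and, crucially, for $\cInd_K^G V$ to admit no further reduction along a proper Levi, so that the structure theory of \cite{MR3600042} applies directly to it.

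Next I would invoke \cite{MR3666048}: since $\supp M=\Lambda^+(1)$, these results relate $M\otimes_{\mathcal{A}}\mathcal{H}$ and $M\otimes_{\mathcal{A}}\cInd_{I(1)}^G\trivrep$ to $\cInd_K^G V$. Concretely, $M\otimes_{\mathcal{A}}\cInd_{I(1)}^G\trivrep\cong\cInd_K^G V$ as $G$-representations, and under this identification the map of the lemma is the composite
\[
M\otimes_{\mathcal{A}}\mathcal{H}\longrightarrow(\cInd_K^G V)^{I(1)}\hookrightarrow\cInd_K^G V,
\]
the first arrow being obtained by tensoring the inclusion $\mathcal{H}=(\cInd_{I(1)}^G\trivrep)^{I(1)}\hookrightarrow\cInd_{I(1)}^G\trivrep$ with $M$ over $\mathcal{A}$ (its image consists of $I(1)$-fixed vectors), and the second being the inclusion of $I(1)$-invariants, which is injective. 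Hence it suffices to prove that $M\otimes_{\mathcal{A}}\mathcal{H}\to(\cInd_K^G V)^{I(1)}$ is injective, a statement now purely about the $G$-representation $\cInd_K^G V$.

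To conclude I would combine the structure of $\cInd_K^G V$ from \cite{MR3600042} with the Bernstein-type description of $\mathcal{H}$ as a free $\mathcal{A}$-module with basis $\{b_w\}_{w\in W_0}$ indexed by $W_0$, and the compatibility of this picture with parahoric induction as in \cite{arXiv:1406.1003_accepted}. This gives $M\otimes_{\mathcal{A}}\mathcal{H}=\bigoplus_{w\in W_0}M\otimes b_w$, while the finite Bruhat decomposition $K=\coprod_{w\in W_0}(I\cap K)\,n_w\,(I\cap K)$ equips $(\cInd_K^G V)^{I(1)}$ with a compatible $W_0$-indexed filtration. The claim is that, with respect to a suitable ordering of $W_0$ refining the dominance order on the associated Weyl chambers, the map is triangular and each diagonal block is an isomorphism; here the hypothesis $\supp M=\Lambda^+(1)$ is essential, for it forces $E(\lambda)$ to act invertibly on $M$ whenever $\nu(\lambda)$ is dominant, so that the leading term of each $b_w$ carries the canonical copy of $M=V^{I(1)\cap K}$ isomorphically onto the $w$-th graded piece, with no cancellation possible against the lower terms $b_v$, $v<w$. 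Injectivity follows formally.

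The main obstacle is this last step. One must extract from the structure theory of $\cInd_K^G V$ in \cite{MR3600042} a description of $(\cInd_K^G V)^{I(1)}$ as an $\mathcal{H}$-module precise enough to see the triangularity — that is, understand the $K$-socle/weight filtration of $\cInd_K^G V$ well enough to pin down the contribution of each double coset $(I\cap K)\,n_w\,(I\cap K)$ to the $I(1)$-invariants and to rule out that the lower-order terms collapse onto the images of the $b_w$. It is exactly the dominance encoded in $\supp M=\Lambda^+(1)$, which has survived all the reductions of the preceding subsections, that makes this collapsing impossible; so the crux is to render this geometric and combinatorial triangularity completely explicit.
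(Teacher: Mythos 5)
Your proposal is incomplete at exactly the point where the real work lies, and the sketch of that step would not go through as stated. First, the identification $M\otimes_{\mathcal{A}}\cInd_{I(1)}^G\trivrep\cong\cInd_K^GV$ with $V$ built out of $M$ itself is unjustified, and it is not how the comparison actually works: in the paper the weight is the fixed finite-dimensional $V_J=\bigoplus_{\psi\in\omega}V_{\psi^{-1},J}$ (independent of $M$), the relevant $G$-representation is the \emph{localization} $\pi_J=C[\Lambda(1)]_\omega\otimes_{C[\Lambda^+(1)]_\omega}\cInd_K^GV_J$ (via the Satake description of $\End_G(\cInd_K^GV_J)$ as $C[\Lambda^+(1)]_\omega$), and $M$ is only tensored in at the very end, over $C[\Lambda(1)]_\omega$, against the $(C[\Lambda(1)]_\omega,\mathcal{H})$-bimodule $X_\Delta$ with $M\otimes_{\mathcal{A}}\mathcal{H}\simeq M\otimes_{C[\Lambda(1)]_\omega}X_\Delta$. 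This ordering is not cosmetic: one needs statements about $C[\Lambda(1)]_\omega$-module structure \emph{before} applying $M\otimes_{C[\Lambda(1)]_\omega}(-)$. Second, your proposed $W_0$-filtration coming from the Bruhat decomposition of $K$ is not the right one; $(\cInd_K^GV)^{I(1)}$ is indexed by $K\backslash G/I(1)$, not by $W_0$. The filtration that works lives inside the principal series $\pi_\emptyset\simeq\Ind_{\overline{B}}^GC[\Lambda(1)]_\omega$ and is indexed by Bruhat-open subsets $A\subset W_0$ of $\overline{B}\backslash G/\overline{B}$, with both $X_\Delta$ and $\pi_\Delta$ intersected against it.

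The decisive gap is the claim that ``injectivity follows formally'' from a triangular shape with invertible diagonal blocks. On the graded pieces the inclusion $X_{\Delta,A}/X_{\Delta,A'}\hookrightarrow\pi_{\Delta,A}/\pi_{\Delta,A'}$ is \emph{not} an isomorphism: by the structure theory of \cite{MR3600042} (V.8) both sides are $c_w\cdot(\text{graded piece at level }\emptyset)$ with $c_w=\prod_{w^{-1}(\alpha)>0}(1-\tau_\alpha)$, a non-zero-divisor but not a unit in $C[\Lambda(1)]_\omega$. More importantly, $M$ is an arbitrary (non-flat) $C[\Lambda(1)]_\omega$-module, so tensoring an injection with $M$ does not preserve injectivity ``formally''; this is precisely the difficulty the paper's proof is designed to circumvent. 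The paper shows that the graded pieces $\pi_{\emptyset,A}/\pi_{\emptyset,A'}$ are free $C[\Lambda(1)]_\omega$-modules, that $c_w$ is a non-zero-divisor (here the hypothesis that $Z_\kappa\cap\Lambda'_\alpha(1)$ acts trivially enters, to define $\tau_\alpha$ and $c_w$ at all), that the filtration exact sequences split, and that the inclusion of graded pieces admits a $C[\Lambda(1)]_\omega$-linear section; only then does applying $M\otimes_{C[\Lambda(1)]_\omega}(-)$ and an induction on $\#A$ yield injectivity. None of these splitting/freeness statements appear in your argument, and without them the final step fails; you also acknowledge the triangularity itself as an unresolved obstacle, so the core of the lemma remains unproved in your proposal.
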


\subsection{Hecke modules}
As discussed in \ref{subsec:Reduction to w=w_J}, we have the following
\[
M\otimes_{\mathcal{A}}\mathcal{H}\simeq M\otimes_{C[\Lambda(1)]}(C[\Lambda(1)]\otimes_{\mathcal{A}}\mathcal{H}),
\]
We decompose this module along the action of $Z_\kappa$.

Set $C[\Lambda(1)]_\psi = \{f\in C[\Lambda(1)]\mid \tau_tf = \psi(t)f\ (t\in Z_\kappa)\}$ and for a $\Lambda(1)$-stable subset $\omega\subset\widehat{Z}_\kappa$ we put $C[\Lambda(1)]_\omega = \bigoplus_{\psi\in \omega}C[\Lambda(1)]_\psi$.
From the definition, it is obvious that $C[\Lambda(1)]$ is invariant under the right action of $C[\Lambda(1)]$.
\begin{lem}\label{lem:decompotision of the group ring by Z_k action}
We have $C[\Lambda(1)]_\omega = \bigoplus_{\psi\in\omega}\{f\in C[\Lambda(1)]\mid f\tau_t = \psi(t)f\ (t\in Z_\kappa)\}$.
\end{lem}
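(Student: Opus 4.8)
The plan is to realize both sides of the asserted equality as the image of one and the same central idempotent of $C[\Lambda(1)]$. Since the order of $Z_\kappa$ is prime to $p$ it is invertible in $C$, so for each $\psi\in\widehat{Z}_\kappa$ I would form $e_\psi = |Z_\kappa|^{-1}\sum_{t\in Z_\kappa}\psi(t)^{-1}\tau_t\in C[Z_\kappa]\subseteq C[\Lambda(1)]$. Using only $\tau_s\tau_t = \tau_{st}$ together with the orthogonality relations for the one-dimensional characters of the finite group $Z_\kappa$, one checks the standard identities $e_\psi^2 = e_\psi$, $e_\psi e_{\psi'} = 0$ for $\psi\ne\psi'$, and $\tau_s e_\psi = e_\psi\tau_s = \psi(s)e_\psi$ for all $s\in Z_\kappa$. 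From $\tau_s e_\psi = \psi(s)e_\psi$ one deduces $C[\Lambda(1)]_\psi = e_\psi C[\Lambda(1)]$ (an $f$ satisfies $\tau_t f = \psi(t)f$ for all $t$ if and only if $e_\psi f = f$), and symmetrically, from $e_\psi\tau_s = \psi(s)e_\psi$, that $\{f\in C[\Lambda(1)]\mid f\tau_t = \psi(t)f\ (t\in Z_\kappa)\} = C[\Lambda(1)]e_\psi$.

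Next I would set $e_\omega = \sum_{\psi\in\omega}e_\psi$. Since the $e_\psi$ are orthogonal idempotents, $e_\omega$ is again an idempotent, and $e_\omega C[\Lambda(1)] = \bigoplus_{\psi\in\omega}e_\psi C[\Lambda(1)] = \bigoplus_{\psi\in\omega}C[\Lambda(1)]_\psi = C[\Lambda(1)]_\omega$, while likewise $C[\Lambda(1)]e_\omega = \bigoplus_{\psi\in\omega}C[\Lambda(1)]e_\psi$ is exactly the right-hand side of the Lemma. Hence it suffices to prove $e_\omega C[\Lambda(1)] = C[\Lambda(1)]e_\omega$, i.e.\ that $e_\omega$ is central in $C[\Lambda(1)]$.

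For centrality it is enough to check that $e_\omega$ commutes with every $\tau_\lambda$ ($\lambda\in\Lambda(1)$), since these span $C[\Lambda(1)]$; equivalently $\tau_\lambda e_\omega\tau_\lambda^{-1} = e_\omega$, which makes sense as $\tau_\lambda^{-1} = \tau_{\lambda^{-1}}$. Reindexing the defining sum of $e_\psi$ via $s = \lambda t\lambda^{-1}$ (legitimate because $Z_\kappa$ is normal in $\Lambda(1)$) gives $\tau_\lambda e_\psi\tau_\lambda^{-1} = e_{\lambda\cdot\psi}$, where $\lambda\cdot\psi$ is the character $t\mapsto\psi(\lambda^{-1}t\lambda)$ — precisely the $\Lambda(1)$-action on $\widehat{Z}_\kappa$ induced by the conjugation action on $Z_\kappa$. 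As $\omega$ is a $\Lambda(1)$-orbit, $\psi\mapsto\lambda\cdot\psi$ permutes $\omega$, so $\tau_\lambda e_\omega\tau_\lambda^{-1} = \sum_{\psi\in\omega}e_{\lambda\cdot\psi} = e_\omega$, and the Lemma follows.

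I do not expect a genuine obstacle: the whole argument is elementary character theory of the finite $p'$-group $Z_\kappa$ together with bookkeeping of the two-sided $C[Z_\kappa]$-action on $C[\Lambda(1)]$. The only conceptual point — and the sole place where the orbit hypothesis enters — is that replacing an individual character $\psi$ by its full $\Lambda(1)$-orbit $\omega$ is exactly what promotes the idempotent $e_\psi$ (merely conjugate to the others in its orbit) to the conjugation-invariant, hence central, idempotent $e_\omega$; once this is isolated, the left-isotypic and right-isotypic descriptions of $C[\Lambda(1)]_\omega$ coincide automatically.
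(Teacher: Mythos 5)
Your proposal is correct and takes essentially the same route as the paper: the paper works with the very same idempotent $e_\psi = \#Z_\kappa^{-1}\sum_{t\in Z_\kappa}\psi(t)^{-1}\tau_t$ and the relation $\tau_\lambda\tau_t = \tau_{\lambda t\lambda^{-1}}\tau_\lambda$ to see that $e_\psi\tau_\lambda$ is a right $Z_\kappa$-eigenvector for a character in $\omega$, which is exactly your conjugation formula $\tau_\lambda e_\psi\tau_{\lambda^{-1}} = e_{\lambda\cdot\psi}$. Your repackaging via the central idempotent $e_\omega$ is only a slightly tidier way of obtaining both inclusions symmetrically, and (consistent with the paper's setting, where $\omega$ is merely a $\Lambda(1)$-stable subset) it uses nothing beyond $\Lambda(1)$-stability of $\omega$.
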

\begin{proof}
Let $\psi\in\omega$, $f\in C[\Lambda(1)]_\psi$ and we write $f = \sum_{\lambda\in \Lambda(1)}c_\lambda \tau_\lambda$ where $c_\lambda\in C$.
Set $e = \#Z_\kappa^{-1}\sum_{t\in Z_\kappa}\psi(t)^{-1}\tau_t\in C[Z_\kappa]$.
Then $ef = f$ and $e\tau_t = \psi(t)e$ for each $t\in Z_\kappa$.
We prove $e\tau_\lambda\in C[\Lambda(1)]_\omega$ for each $\lambda\in \Lambda(1)$.
We have $e\tau_\lambda\tau_t = e\tau_{\lambda t\lambda^{-1}}\tau_\lambda  = (\lambda^{-1}\psi)(t) e\tau_\lambda$.
Since $\lambda^{-1}\psi\in \omega$, we get the lemma.
\end{proof}
Therefore $C[\Lambda(1)]_\omega$ is a two-sided ideal of $C[\Lambda(1)]$.
Using $Z_\kappa$-action, some objects appearing here are decomposed.
Here is a list.
\begin{itemize}
\item $C[\Lambda(1)] = C[\Lambda(1)]_\omega\times C[\Lambda(1)]_{\widehat{Z}_\kappa\setminus\omega}$ as $C$-algebras.
\item $\mathcal{A} = \mathcal{A}_\omega\times\mathcal{A}_{\widehat{Z}_\kappa\setminus\omega}$ as $C$-algebras with the obvious notation.
\item The homomorphism \eqref{eq:hom chi tilde} induces $\mathcal{A}_\omega\to C[\Lambda(1)]_\omega$ and $\mathcal{A}_{\widehat{Z}_\kappa\setminus\omega}\to C[\Lambda(1)]_{\widehat{Z}_\kappa\setminus\omega}$.
\end{itemize}
Let $M$ be an $\mathcal{A}$-module such that $\supp M = \Lambda^+(1)$ and $M = M_\omega$.
Then as in \ref{subsec:Reduction to w=w_Delta}, $M$ is a $C[\Lambda(1)]$-module and this action factors through $C[\Lambda(1)]\to C[\Lambda(1)]_\omega$.
Hence we have
\begin{equation}\label{eq:M as tensor with universal module}
M\otimes_{\mathcal{A}}\mathcal{H}\simeq M\otimes_{C[\Lambda(1)]_\omega}(C[\Lambda(1)]_\omega\otimes_{\mathcal{A}}\mathcal{H})
\end{equation}

In \cite[Section~3]{arXiv:1406.1003_accepted}, it is proved that, for any $w\in W_0$, $1\otimes 1\mapsto 1\otimes T_{n_{w_\Delta w^{-1}}}^*$ gives a homomorphism
\[
n_wC[\Lambda(1)]\otimes_{\mathcal{A}}\mathcal{H}\to n_{w_\Delta}C[\Lambda(1)]\otimes_{\mathcal{A}}\mathcal{H}
\]
which is injective \cite[Proposition~3.12]{arXiv:1406.1003_accepted}.
This is a $(C[\Lambda(1)],\mathcal{H})$-bimodule homomorphism.
The homomorphism is compatible with the decomposition $n_wC[\Lambda(1)]\otimes_{\mathcal{A}}\mathcal{H}\simeq n_wC[\Lambda(1)]_\omega\otimes_{\mathcal{A}}\mathcal{H}\oplus n_wC[\Lambda(1)]_{\widehat{Z}_\kappa\setminus\omega}\otimes_{\mathcal{A}}\mathcal{H}$.
Hence we get the homomorphism
\[
n_wC[\Lambda(1)]_\omega\otimes_{\mathcal{A}}\mathcal{H} \to n_{w_\Delta}C[\Lambda(1)]_\omega\otimes_{\mathcal{A}}\mathcal{H}
\]
which is again injective.
By \cite[Theorem~3.13]{arXiv:1406.1003_accepted}, the image of this homomorphism only depends on $\Delta_w$.
Let $X_J$ be the image of this homomorphism where $J = \Delta_w$.
This is a $(C[\Lambda(1)]_\omega,\mathcal{A})$-module.
We have $M\otimes_{\mathcal{A}}\mathcal{H} = M\otimes_{C[\Lambda(1)]_\omega}X_\Delta$ by \eqref{eq:M as tensor with universal module}.

\begin{lem}
If $J'\supset J$, then $X_{J'}\subset X_J$.
\end{lem}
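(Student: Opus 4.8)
The statement to prove is: if $J'\supset J$, then $X_{J'}\subset X_J$ as submodules of $n_{w_\Delta}C[\Lambda(1)]_\omega\otimes_{\mathcal{A}}\mathcal{H}$. Recall that $X_J$ is defined as the image of the injective map $n_wC[\Lambda(1)]_\omega\otimes_{\mathcal{A}}\mathcal{H}\to n_{w_\Delta}C[\Lambda(1)]_\omega\otimes_{\mathcal{A}}\mathcal{H}$ given by $1\otimes 1\mapsto 1\otimes T^*_{n_{w_\Delta w^{-1}}}$, where $w$ is any element of $W_0$ with $\Delta_w=J$ (the image is independent of this choice by \cite[Theorem~3.13]{arXiv:1406.1003_accepted}), and similarly for $X_{J'}$ with some $w'$ satisfying $\Delta_{w'}=J'$.

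The first step is to make a convenient choice of representatives. Since $J'\supset J$, I would like to choose $w$ and $w'$ so that $w_\Delta w^{-1} = (w_\Delta w'^{-1})\cdot u$ with $\ell(w_\Delta w^{-1}) = \ell(w_\Delta w'^{-1}) + \ell(u)$ for some $u\in W_0$; geometrically, this amounts to a compatibility of the Weyl chambers $w(\Lambda^+(1))$ and $w'(\Lambda^+(1))$, reflecting the containment of the "faces" $J$ and $J'$. Concretely, the set $\{\alpha\in\Delta : w(\alpha)>0\}=\Delta_w=J$ is smaller than $\Delta_{w'}=J'$, so $w$ "flips" more simple roots, and one expects a length-additive factorization. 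Once this is in place, the braid relation $T^*_{w_1}T^*_{w_2}=T^*_{w_1w_2}$ when $\ell(w_1w_2)=\ell(w_1)+\ell(w_2)$ (stated in the preliminaries, with the appropriate lift conventions for $W(1)$ and the multiplicativity of the chosen lifts $n_{w_1w_2}=n_{w_1}n_{w_2}$) gives $T^*_{n_{w_\Delta w^{-1}}} = T^*_{n_{w_\Delta w'^{-1}}} T^*_{n_u}$ inside $\mathcal{H}$.

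The second step is to use this factorization to realize the map defining $X_J$ as a composite through the map defining $X_{J'}$. Tracking the bimodule maps: the homomorphism $n_wC[\Lambda(1)]_\omega\otimes_{\mathcal{A}}\mathcal{H}\to n_{w'}C[\Lambda(1)]_\omega\otimes_{\mathcal{A}}\mathcal{H}$ sending $1\otimes 1\mapsto 1\otimes T^*_{n_{w'w^{-1}}}$ (which is the "$X$" map for the pair $w,w'$, up to translating everything by $n_{w_\Delta}$), composed with the $X_{J'}$-map $n_{w'}C[\Lambda(1)]_\omega\otimes_{\mathcal{A}}\mathcal{H}\to n_{w_\Delta}C[\Lambda(1)]_\omega\otimes_{\mathcal{A}}\mathcal{H}$, $1\otimes 1\mapsto 1\otimes T^*_{n_{w_\Delta w'^{-1}}}$, yields (by the braid relation above, with $u = w'w^{-1}$) exactly $1\otimes 1\mapsto 1\otimes T^*_{n_{w_\Delta w^{-1}}}$, i.e. the $X_J$-map. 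Since the image of a composite is contained in the image of the second map, we get $X_J\subseteq X_{J'}$... wait — this gives the \emph{reverse} inclusion from what is claimed, so I must be careful about the direction: the correct reading is that the $X_{J'}$-map factors through the $X_J$-map (the larger $J$ corresponds to the chamber "closer to dominant", and its associated $T^*$ has \emph{smaller} length), so $X_{J'}=\Ima(\text{$X_{J'}$-map})\subseteq\Ima(\text{$X_J$-map})=X_J$. The heart of the matter is thus getting the length-additivity and the direction of the factorization right; once the factorization $T^*_{n_{w_\Delta w'^{-1}}} = T^*_{n_{w_\Delta w^{-1}}}\cdot T^*_{n_{w w'^{-1}}}$ with additive lengths is established, the inclusion is immediate.

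**The main obstacle.** The only real work is the combinatorial lemma about $W_0$: that $J\subset J'$ (equivalently $\Delta_w\subset\Delta_{w'}$) forces $\ell(w_\Delta w'^{-1}) = \ell(w_\Delta w^{-1}) + \ell(w w'^{-1})$, so that the braid relation for $T^*$ applies. This should follow from standard facts about the Bruhat order and the functions $w\mapsto\Delta_w$: namely, $\ell(w_\Delta v) = \ell(w_\Delta)-\ell(v)$ always, and $\Delta_w\subset\Delta_{w'}$ is equivalent to a length-additive relation between $w$ and $w'$ after right-multiplying by the appropriate element; I would verify this by reducing to the description of $\Delta_w$ in terms of the set of inversions of $w^{-1}$, or alternatively cite it from \cite{arXiv:1406.1003_accepted} where these $\Delta_w$ are introduced and studied. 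The lift conventions ($n_{w_1w_2}=n_{w_1}n_{w_2}$ under length-additivity) are already set up in the excerpt, so passing from $W_0$ to $W(1)$ is harmless.
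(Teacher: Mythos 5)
Your overall strategy is the paper's: use the independence of $X_J$ from the choice of representative $w$ with $\Delta_w=J$, factor the defining element $T^*$ for $J'$ through the one for $J$ by a length-additive braid relation for the $T^*$-basis, and conclude that the generator of $X_{J'}$ lies in the sub-bimodule $X_J$; you also caught the correct direction of the inclusion. The gap is precisely in the step you single out as the heart of the matter. The combinatorial claim you propose to prove, namely that $\Delta_w\subset\Delta_{w'}$ forces $\ell(w_\Delta w'^{-1})=\ell(w_\Delta w^{-1})+\ell(ww'^{-1})$, is false for arbitrary representatives: since $\ell(w_\Delta v)=\ell(w_\Delta)-\ell(v)$, the claim is equivalent to $\ell(ww'^{-1})=\ell(w)-\ell(w')$, which fails whenever $w\neq w'$, $\Delta_w=\Delta_{w'}$ and $\ell(w)\le\ell(w')$; for instance in type $A_2$ take $w=s_2$ and $w'=s_1s_2$, both with $\Delta_w=\Delta_{w'}=\{\alpha_1\}$, so the hypothesis $\Delta_w\subset\Delta_{w'}$ holds but the length identity does not. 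So there is no equivalence between $\Delta_w\subset\Delta_{w'}$ and length-additivity, and "reducing to inversions" will not rescue the general statement; the actual content of the proof is the choice of representatives, which your write-up leaves unspecified.

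The repair is exactly what the paper does: take $w=w_\Delta w_J$ and $w'=w_\Delta w_{J'}$ (note $\Delta_{w_\Delta w_J}=J$), so that $X_J$ is generated by $1\otimes T^*_{n_{w_\Delta w_Jw_\Delta}}$ and $X_{J'}$ by $1\otimes T^*_{n_{w_\Delta w_{J'}w_\Delta}}$. For this choice the needed identity reads $\ell(w_\Delta w_Jw_{J'}w_\Delta)=\ell(w_Jw_{J'})=\ell(w_{J'})-\ell(w_J)=\ell(w_\Delta w_{J'}w_\Delta)-\ell(w_\Delta w_Jw_\Delta)$, the middle equality being the standard fact about the longest element $w_J$ of $W_{0,J}\subset W_{0,J'}$. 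The braid relation for $T^*$ then gives $T^*_{n_{w_\Delta w_{J'}w_\Delta}}=T^*_{n_{w_\Delta w_Jw_\Delta}}T^*_{n_{w_\Delta w_Jw_{J'}w_\Delta}}$, hence $1\otimes T^*_{n_{w_\Delta w_{J'}w_\Delta}}\in X_J$ and therefore $X_{J'}\subset X_J$. With the representatives made explicit your argument closes and coincides with the paper's proof.
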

\begin{proof}
Note that $\Delta_{w_\Delta w_J} = J$.
Hence by the definition, $X_J$ is a submodule in $n_{w_\Delta}C[\Lambda(1)]\otimes_{\mathcal{A}}\mathcal{H}$ generated by $1\otimes T^*_ {n_{w_\Delta w_J w_\Delta}}$.
If $J'\supset J$, then $\ell(w_\Delta w_{J}w_{J'}w_\Delta) = \ell(w_{J}w_{J'}) = \ell(w_{J'}) - \ell(w_{J}) = \ell(w_\Delta w_{J'}w_\Delta) - \ell(w_\Delta w_J w_\Delta)$.
Hence $T^*_{n_{w_\Delta w_{J'}w_\Delta}} = T^*_{n_{w_\Delta w_Jw_\Delta}}T^*_{n_{w_\Delta w_Jw_{J'}w_\Delta}}$.
Therefore $1\otimes T^*_{n_{w_\Delta w_{J'}w_\Delta}}\in X_J$.
Since $X_{J'}$ is generated by $1\otimes T^*_{n_{w_\Delta w_{J'}w_\Delta}}$, we have $X_{J'}\subset X_J$.
\end{proof}

\begin{lem}\label{lem:X_J in C}
$X_J\in \mathcal{C}$.
\end{lem}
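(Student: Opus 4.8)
The plan is to apply the criterion established above: an $\mathcal{H}$-module belongs to $\mathcal{C}$ as soon as some $z_\lambda$ with $\nu(\lambda)$ regular acts invertibly on it. So it suffices to produce such a $\lambda$ for $X_J$.

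First I would fix $w\in W_0$ with $\Delta_w=J$, so that the injective homomorphism of \cite[Proposition~3.12]{arXiv:1406.1003_accepted} identifies $X_J$, as an $\mathcal{H}$-module, with $N\otimes_{\mathcal{A}}\mathcal{H}$, where $N=n_wC[\Lambda(1)]_\omega$. Choose $\lambda\in\Lambda_S(1)$ with $\nu(\lambda)$ dominant and regular (take a dominant regular $\xi\in X_*(S)$, which exists, and set $\lambda=\xi(\varpi)^{-1}$, so that $\nu(\lambda)=\xi$). Since $\Lambda(1)$ is normal in $W(1)$, the orbit $\mathcal{O}_\lambda$ is contained in $\Lambda(1)$, hence $z_\lambda=\sum_{\lambda'\in\mathcal{O}_\lambda}E(\lambda')\in\mathcal{A}$; moreover $z_\lambda$ is central in $\mathcal{H}$. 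Because $z_\lambda$ is central in $\mathcal{H}$ and lies in $\mathcal{A}$, its action on $N\otimes_{\mathcal{A}}\mathcal{H}$ is $\rho\otimes\mathrm{id}_{\mathcal{H}}$, where $\rho\colon N\to N$ denotes the action of $z_\lambda$ on $N$: indeed $(n\otimes h)z_\lambda=n\otimes z_\lambda h=(nz_\lambda)\otimes h$, and $\rho$ is a map of right $\mathcal{A}$-modules because $z_\lambda$ is central in $\mathcal{A}$. Hence it suffices to show that $z_\lambda$ acts invertibly on $N$.

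Now $z_\lambda$ acts on $N=n_wC[\Lambda(1)]_\omega$ by applying the homomorphism \eqref{eq:hom chi tilde} to the $n_w$-conjugates of the $E(\lambda')$, $\lambda'\in\mathcal{O}_\lambda$; since conjugation by $n_w\in W(1)$ permutes $\mathcal{O}_\lambda$, the upshot is that $z_\lambda$ acts as multiplication by $\sum_{\mu\in\mathcal{O}_\lambda\cap\Lambda^+(1)}\tau_\mu\in C[\Lambda(1)]$. The elements of $\Lambda_S(1)$ are images of elements of $S$, which is central in $Z$, so $\lambda$ is central in $\Lambda(1)$; and, as in the proof of Lemma~\ref{lem:description of z_lambda}, regularity of $\nu(\lambda)$ forces any $\mu\in\mathcal{O}_\lambda$ with $\nu(\mu)$ dominant to be of the form $g\lambda g^{-1}$ with $g\in\Lambda(1)$, whence $\mu=\lambda$. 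Therefore $\mathcal{O}_\lambda\cap\Lambda^+(1)=\{\lambda\}$, so $z_\lambda$ acts on $N$ as multiplication by $\tau_\lambda$, which is a unit of $C[\Lambda(1)]$. Thus $z_\lambda$ is invertible on $N$, hence on $X_J$, and $X_J\in\mathcal{C}$. The computations here are routine; the one point requiring care is the reduction from the action of $z_\lambda$ on $N\otimes_{\mathcal{A}}\mathcal{H}$ to its (twisted) action on $N$, which rests on $z_\lambda$ lying simultaneously in the center of $\mathcal{H}$ and in the subalgebra $\mathcal{A}$.
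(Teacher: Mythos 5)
Your proof is correct and takes essentially the same route as the paper: choose $\lambda\in\Lambda_S(1)$ with $\nu(\lambda)$ regular dominant, use that $z_\lambda$ is central and lies in $\mathcal{A}$ to move its action onto the $C[\Lambda(1)]_\omega$-factor, and note that the support condition kills all but one term of the orbit sum, so $z_\lambda$ acts as multiplication by the unit $\tau_\lambda$ and is therefore invertible, which by the criterion lemma gives $X_J\in\mathcal{C}$. The only cosmetic difference is that you compute in $n_wC[\Lambda(1)]_\omega\otimes_{\mathcal{A}}\mathcal{H}$, identified with $X_J$ via the injective homomorphism, whereas the paper performs the same computation inside the ambient module $n_{w_\Delta}C[\Lambda(1)]_\omega\otimes_{\mathcal{A}}\mathcal{H}$.
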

\begin{proof}
Take $\lambda\in \Lambda_S(1)$ such that $\nu(\lambda)$ is regular dominant.
Then we have $z_\lambda = \sum_{v\in W_0}E(n_v\lambda n_v^{-1})$ by Lemma~\ref{lem:description of z_lambda}.
Let $f\otimes X\in X_w$.
Then, since $z_\lambda$ is in the center, we have $(f\otimes X)z_\lambda = f\otimes z_\lambda X = f\otimes \sum_{v\in W_0}E(n_v\lambda n_v^{-1})X = f\tau_\lambda\otimes X$ in $n_{w_\Delta}C[\Lambda(1)]_\omega\otimes_{\mathcal{A}}\mathcal{H}$.
Since $f\mapsto f\tau_\lambda$ is invertible, $z_\lambda$ is invertible on $X_w$.
\end{proof}

Note that $n_{w_\Delta}C[\Lambda(1)]_\omega\otimes_{\mathcal{A}}\mathcal{H} \simeq n_{w_\Delta}C[\Lambda(1)]_\omega\otimes_{\mathcal{A}_{w_\Delta}}\mathcal{H}$ \cite[Proposition~3.12]{arXiv:1406.1003_accepted}.
Hence $X_\emptyset = n_{w_\Delta}C[\Lambda(1)]_\omega\otimes_{(\mathcal{H}_\emptyset^+,j_\emptyset^+)}\mathcal{H}$.
This is a parabolically induced module~\cite{MR3437789}.
By \cite{MR3437789}, we have $n_{w_\Delta}C[\Lambda(1)]_\omega\otimes_{\mathcal{A}}\mathcal{H} = \bigoplus_{w\in W_0}n_{w_\Delta}C[\Lambda(1)]_\omega\otimes T_{n_w}$.
Since $T_{n_w}^* \in T_{n_w} + \sum_{v < w}C[Z_\kappa]T_{n_v}$, we have $n_{w_\Delta}C[\Lambda(1)]_\omega\otimes_{\mathcal{A}}\mathcal{H}  = \bigoplus_{w\in W_0}n_{w_\Delta}C[\Lambda(1)]_\omega\otimes T_{n_w}^*$.

Set $Y_w = n_{w_\Delta}C[\Lambda(1)]_\omega\otimes T_{n_w}^*\subset X_\emptyset$.
Then the subspace $Y_w$ is the image of $n_wC[\Lambda(1)]_\omega\otimes 1$ by the above injective homomorphism.
In particular, $Y_w$ is $\mathcal{A}$-stable and isomorphic to $n_wC[\Lambda(1)]_\omega$.
We have $X_\emptyset = \bigoplus_{w\in W_0} Y_w$.
This is the decomposition in Lemma~\ref{lem:decomposition along support}.
By the functoriality of the decomposition, we have $X_J = \bigoplus_{w\in W_0}(X_J\cap Y_w)$.

\subsection{Representations of $G$}
Recall that we have fixed a special parahoric subgroup $K$.
Irreducible representations $V$ of $K$ are parametrized by a pair $(\psi,J)$ where $\psi$ is a character of $Z_\kappa$ and $J$ a certain subset of $\Delta$.
Here for $V$, $\psi$ and $J$ is given by the following: $\psi \simeq V^{I(1)}$ and $W_{0,J} = \Stab_{W_0}(V^{I(1)})$.
Let $V_{\psi,J}$ be the irreducible representation of $K$ which corresponds to $(\psi,J)$ and put $V_J = \bigoplus_{\psi\in \omega}V_{\psi^{-1},J}$.
In the rest of this paper, we fix a basis of $V_{\psi^{-1},J}^{I(1)}$ for each $\psi$ and $J$.
\begin{lem}\label{lem:Hecke algebra and the group ring}
\begin{enumerate}
\item The Hecke algebra $\End_Z(\cInd_{Z\cap K}^ZV_J^{I(1)})$ is isomorphic to $C[\Lambda(1)]_\omega$.
\item We have the Satake homomorphism 
\[
\End_G(\cInd_K^GV_J)\hookrightarrow \End_Z(\cInd_{Z\cap K}^ZV_J^{I(1)})\simeq C[\Lambda(1)]_\omega
\]
and its image is $C[\Lambda^+(1)]_\omega$.
\end{enumerate}
\end{lem}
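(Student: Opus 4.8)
The plan is to establish (1) by computing directly the Hecke algebra of the pair $(Z,Z\cap K)$ twisted by the finite-dimensional representation $V_J^{I(1)}$, and to establish (2) by identifying the mod $p$ Satake homomorphism with the inclusion of the dominant part of that algebra. For (1): since $\mathbf{x}_0$ is special, $Z\cap K$ is the maximal compact subgroup of $Z$; it is normal in $Z$ and $Z/(Z\cap K)\cong\Lambda$. The representation $V_J^{I(1)}=\bigoplus_{\psi\in\omega}\psi^{-1}$ is inflated from $Z_\kappa=(Z\cap K)/(Z\cap I(1))$, is multiplicity-free as a $Z_\kappa$-module, and is stable under conjugation by $Z$ because $\omega$ is a single orbit for the conjugation action of $\Lambda(1)$ on $\widehat{Z}_\kappa$ (an action that factors through $\Lambda$, since inner automorphisms fix characters). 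Hence, by Mackey theory for the open normal subgroup $Z\cap K$,
\[
\End_Z(\cInd_{Z\cap K}^ZV_J^{I(1)})=\bigoplus_{\bar z\in\Lambda}\Hom_{Z\cap K}(V_J^{I(1)},{}^zV_J^{I(1)}),
\]
and each summand, being $\End_{Z_\kappa}(\bigoplus_{\psi\in\omega}\psi^{-1})$, has the projections to the $\psi^{-1}$-lines as a basis; so the left-hand side has a $C$-basis indexed by $\Lambda\times\omega$. On the other hand, with the idempotents $e_\psi=\#Z_\kappa^{-1}\sum_{t\in Z_\kappa}\psi(t)^{-1}\tau_t$ of the proof of Lemma~\ref{lem:decompotision of the group ring by Z_k action}, the family $\{e_\psi\tau_\lambda\}$ with $\psi\in\omega$ and $\lambda$ running over coset representatives of $Z_\kappa$ in $\Lambda(1)$ is a $C$-basis of $C[\Lambda(1)]_\omega$, again indexed by $\Lambda\times\omega$. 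Matching the two bases, the product $e_\psi\tau_\lambda\cdot e_{\psi'}\tau_{\lambda'}$ is governed by multiplication in $\Lambda(1)$ and by the $\Lambda(1)$-action on $\omega$, which is exactly what governs the convolution of the corresponding endomorphisms, so the matching is an algebra isomorphism. (This is the $Z$-case of the mod $p$ Satake computation and could alternatively be quoted from \cite{MR3600042}.)

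For (2): let $\mathcal{S}\colon\End_G(\cInd_K^GV_J)\to\End_Z(\cInd_{Z\cap K}^ZV_J^{I(1)})$ be the mod $p$ Satake homomorphism constructed in \cite{MR3600042} (regard an endomorphism as a compactly supported $K$-biinvariant $\End_C(V_J)$-valued function on $G$, restrict it to $Z$, and compose with the $Z\cap K$-equivariant projection $V_J\twoheadrightarrow V_J^{I(1)}$ obtained from the Iwahori factorization of $I(1)$); that $\mathcal{S}$ is a well-defined injective algebra homomorphism is part of the mod $p$ Satake isomorphism. To compute its image, use the Cartan decomposition for the special subgroup $K$: the double cosets in $K\backslash G/K$ are parametrized by $\Lambda^+$, so for chosen representatives $\dot\lambda\in Z$,
\[
\End_G(\cInd_K^GV_J)=\bigoplus_{\lambda\in\Lambda^+}\Hom_{K\cap\dot\lambda K\dot\lambda^{-1}}(V_J,{}^{\dot\lambda}V_J).
\]
Using the structure of the $K$-modules $V_{\psi^{-1},J}$ worked out in \cite{MR3600042} (their restriction to parahoric subgroups and their unipotent coinvariants), one checks that each summand has dimension $\#\omega$, and that $\mathcal{S}$ sends the natural basis vector attached to $(\dot\lambda,\psi)$ to $e_\psi\tau_{\lambda}$ up to a nonzero scalar, where $\lambda\in\Lambda^+(1)$ denotes the image of $\dot\lambda$. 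Since $\{e_\psi\tau_\lambda:\psi\in\omega,\ \lambda\in\Lambda^+(1)\bmod Z_\kappa\}$ is a $C$-basis of $C[\Lambda^+(1)]_\omega$, this identifies $\Ima\mathcal{S}$ with $C[\Lambda^+(1)]_\omega$.

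The main obstacle is this last step of (2): one must control the Hom spaces $\Hom_{K\cap\dot\lambda K\dot\lambda^{-1}}(V_J,{}^{\dot\lambda}V_J)$ for all $\lambda\in\Lambda^+$ and show that $\mathcal{S}$ meets $C[\Lambda^+(1)]_\omega$ in full — that the Satake transform of $\cInd_K^GV_J$ is exactly as large as the dominant monoid permits, neither larger (the support/injectivity bound) nor smaller (surjectivity onto the dominant part). This is precisely where the special choice of $K$ and the detailed representation theory of $V_{\psi^{-1},J}$ from \cite{MR3600042} enter; granted those inputs, the remaining bookkeeping with the idempotents $e_\psi$ and the compatibility with the identification established in (1) is routine.
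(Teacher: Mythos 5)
Your part (1) is essentially the paper's own argument: decompose $\End_Z(\cInd_{Z\cap K}^ZV_J^{I(1)})$ into the intertwining spaces attached to the characters $\psi^{-1}$, $\psi\in\omega$, and identify the result inside the Hecke algebra $\mathcal{H}_Z\simeq C[\Lambda(1)]$ of $Z$; whether one phrases this via Mackey theory and the idempotents $e_\psi$ or via the spaces of bi-equivariant functions $\mathcal{H}(\psi_1^{-1},\psi_2^{-1})$ is immaterial. (Minor caveat: $Z\cap K$ is the parahoric subgroup of $Z$, which is open and normal but in general a proper finite-index subgroup of the maximal compact subgroup; openness and normality are all you actually use.)

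The genuine gap is in part (2), exactly at the point you flag as ``the main obstacle'' and then declare routine. The assertion that the image of the Satake homomorphism is precisely $C[\Lambda^+(1)]_\omega$ is not bookkeeping that follows from the Cartan decomposition plus the structure of the weights $V_{\psi^{-1},J}$ as $K$-representations: it is the main theorem of \cite{arXiv:1805.00244} (Theorem~1.1), which the paper cites at exactly this step, and whose proof runs through a change-of-weight/inverse-Satake argument --- one of the hard inputs of the mod $p$ theory, not a consequence of knowing restrictions to parahorics and unipotent coinvariants of $V_{\psi^{-1},J}$. Moreover the mechanism you propose is incorrect as stated: the mod $p$ Satake transform does not send the basis element attached to $(\dot\lambda,\psi)$ to a nonzero multiple of $e_\psi\tau_\lambda$; it is only triangular with respect to the dominance order, with lower-order terms that are nonzero in general, and the actual content is that these lower terms stay supported on dominant elements and that the leading coefficient is a unit (this is where the specialness of $K$ and change of weight enter). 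Your description of the map itself (restrict the $\End_C(V_J)$-valued function to $Z$ and project via the Iwahori factorization) is also not the definition of \cite{MR3001801}, which involves a sum over $U/(U\cap K)$; mere restriction to $Z$ need not be multiplicative. So either quote \cite{MR3001801} for the construction and \cite{arXiv:1805.00244} for the image, as the paper does, or supply an actual proof of the image computation --- but that is a theorem, not routine bookkeeping.
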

\begin{proof}
Let $\mathcal{H}(\psi_1^{-1},\psi_2^{-1})$ is the space of functions $\varphi\colon Z\to C$ such that $\supp\varphi$ is compact and $\varphi(t_1zt_2) = \psi_1^{-1}(t_1)\varphi(z)\psi_2^{-1}(t_2)$ for any $z\in Z$ and $t_1,t_2\in Z\cap K$.
Since $V_J^{I(1)} \simeq \bigoplus_{\psi\in \omega}\psi^{-1}$, a standard argument for Hecke algebras implies
\begin{align*}
\End_Z(\Ind_{Z\cap K}^ZV_J^{I(1)}) & \simeq \bigoplus_{\psi_1,\psi_2\in\omega}\Hom_Z(\cInd_{Z\cap K}^Z\psi_1^{-1},\cInd_{Z\cap K}^Z\psi_2^{-1})\\
& \simeq \bigoplus_{\psi_1,\psi_2\in \omega}\mathcal{H}(\psi_1^{-1},\psi_2^{-1}).
\end{align*}
This space is a subalgebra of $\mathcal{H}_Z$ where $\mathcal{H}_Z$ is the functions $\varphi$ on $Z$ which is invariant under the left (and equivalently right) multiplication by $Z\cap I(1)$ and whose support is compact.
The homomorphism $\varphi\mapsto \sum_{z\in Z/(Z\cap K)}\varphi(z)\tau_z$ gives an isomorphism $\mathcal{H}_Z\simeq C[\Lambda(1)]$.
As a subspace of both sides, it is easy to see that we get the desired isomorphism.

The Satake transform
\[
\Hom_G(\cInd_K^GV_{\psi_1,J},\cInd_{K}^GV_{\psi_2,J})
\to
\Hom_Z(\cInd_{Z\cap K}^Z\psi_1^{-1},\cInd_{Z\cap K}^Z\psi_2^{-1})
\]
is defined in \cite[2]{MR3001801} and the image is described in \cite[Theorem~1.1]{arXiv:1805.00244}.
\end{proof}
\begin{rem}
In the identification (1) in the lemma, we need to fix an isomorphism $V_J^{I(1)}\simeq \bigoplus_{\psi\in \omega}\psi^{-1}$.
We use our fixed basis of $V_{\psi^{-1},J}^{I(1)}$ for this isomorphism.
\end{rem}
By the lemma, $C[\Lambda^+(1)]_\omega$ acts on $\cInd_K^GV_J$.
Define a representation $\pi_J$ of $G$ by $\pi_J = C[\Lambda(1)]_\omega\otimes_{C[\Lambda^+(1)]_\omega}\cInd_K^GV_J$.
We prove $\pi_J^{I(1)} \simeq X_J$.

Recall that the $\mathcal{H}$-module $(\cInd_K^GV_J)^{I(1)}$ is described as follows.
Let $\mathcal{H}_{\mathrm{f}}$ be the Hecke algebra attached to the pair $(K,I(1))$.
Then $V_J^{I(1)}$ is naturally a right $\mathcal{H}_{\mathrm{f}}$-module and the algebra $\mathcal{H}_{\mathrm{f}}$ is a subalgebra of $\mathcal{H}$ with a basis $\{T_w\mid w\in W_0(1)\}$ where $W_0(1)$ is the inverse image of $W_0\subset W$ in $W(1)$.
Then we have $(\cInd_K^GV_J)^{I(1)}\simeq V_J^{I(1)}\otimes_{\mathcal{H}_{\mathrm{f}}}\mathcal{H}$~\cite{MR3646282}.
\begin{rem}
In the argument below, we will use results in \cite{MR3666048}.
In \cite{MR3666048}, we study an $\mathcal{H}_{\mathrm{f}}$-module denoted by $\eta^J = \bigoplus_{\psi\in\widehat{Z}_\kappa}V_{\psi,J}^{I(1)}$.
Using a similar argument in \cite{MR3666048} (or taking a direct summand of results), results are also true for an $\mathcal{H}_{\mathrm{f}}$-module $V_J^{I(1)}$.
\end{rem}
We have an action of $C[\Lambda^+(1)]_\omega$ on $V_J^{I(1)}\otimes_{\mathcal{H}_{\mathrm{f}}}\mathcal{H}$ \cite[Proposition~3.4]{MR3666048} and the above isomorphism $(\cInd_K^GV_J)^{I(1)}\simeq V_J^{I(1)}\otimes_{\mathcal{H}_{\mathrm{f}}}\mathcal{H}$ is $C[\Lambda^+(1)]_\omega$-equivariant. (This can be proved by the same argument in the proof of \cite[Proposition~5.1]{MR3666048}.)
Therefore we have
\[
\pi_J^{I(1)}\simeq C[\Lambda(1)]_\omega\otimes_{C[\Lambda^+(1)]_\omega}V_J^{I(1)}\otimes_{\mathcal{H}_{\mathrm{f}}}\mathcal{H}
\]
By \cite[Proposition~3.11]{MR3666048}, we have an isomorphism $C[\Lambda(1)]_{\omega}\otimes_{C[\Lambda^+(1)]_\omega}V_J^{I(1)}\otimes_{\mathcal{H}_{\mathrm{f}}}\mathcal{H}\simeq X_J$.
Hence $\pi_J^{I(1)}\simeq X_J$.

Therefore we have an embedding $X_J\to \pi_J$.
This homomorphism factors through $X_J\to X_J\otimes_{\mathcal{H}}\cInd_{I(1)}^G\trivrep$.
Recall that $M\otimes_{C[\Lambda(1)]} X_\Delta\simeq M\otimes_{\mathcal{A}}\mathcal{H}$ \eqref{eq:M as tensor with universal module}.
Hence for Lemma~\ref{lem:injective lemma, final form}, it is sufficient to prove that $M\otimes_{C[\Lambda(1)]}X_\Delta\to M\otimes_{C[\Lambda(1)]}\pi_\Delta$ is injective.

We have an isomorphism $\pi_\emptyset\simeq \Ind_{\overline{B}}^G(\cInd_{Z\cap K}^ZV_J^{I(1)})$ \cite{MR3001801}.
(To be precisely, the direct sum of a result in \cite{MR3001801}.)
An injective embedding $\pi_J\to \Ind_{\overline{B}}^G(\cInd_{Z\cap K}^ZV_J^{I(1)})\simeq \pi_\emptyset$ was given in \cite[Definition~7.1]{MR3001801}.
Hence we have a diagram
\[
\begin{tikzcd}
X_J \arrow[r]\arrow[d] & X_\emptyset\arrow[d]\\
\pi_J\arrow[r] & \pi_\emptyset.
\end{tikzcd}
\]
When $J = \emptyset$, $X_J\to X_\emptyset$ and $\pi_J\to \pi_\emptyset$ are both identities.
Hence this diagram is commutative.
\begin{lem}
This diagram is commutative for any $J$.
\end{lem}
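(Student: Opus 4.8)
The plan is to reduce the commutativity of the square for general $J$ to the known commutativity for $J = \emptyset$, exploiting that all four maps in the square are built functorially out of the $J = \emptyset$ situation together with the generating element $1\otimes T^*_{n_{w_\Delta w_J w_\Delta}}$. Concretely: the horizontal arrow $X_J \to X_\emptyset$ is, by construction in \ref{subsec:Reduction to w=w_J} and the discussion following Lemma~\ref{lem:X_J in C}, the inclusion of the submodule of $n_{w_\Delta}C[\Lambda(1)]_\omega\otimes_{\mathcal{A}}\mathcal{H}$ generated by $1\otimes T^*_{n_{w_\Delta w_J w_\Delta}}$. On the representation side, the arrow $\pi_J \to \pi_\emptyset$ is the embedding of \cite[Definition~7.1]{MR3001801}, which — under the identification $\pi_\emptyset \simeq \Ind_{\overline{B}}^G(\cInd_{Z\cap K}^Z V_J^{I(1)})$ — realises $\pi_J$ as the image of the corresponding Satake/intertwining operator. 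The key observation is that both embeddings become, after applying $(\cdot)^{I(1)}$ and using $\pi_\bullet^{I(1)} \simeq X_\bullet$, morphisms of $(C[\Lambda(1)]_\omega,\mathcal{H})$-bimodules, hence are determined by where they send the single generator $1\otimes T^*_{n_{w_\Delta w_J w_\Delta}}$.

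So the first step is to check that the vertical map $X_J \to \pi_J$ (which is the composite $X_J \xrightarrow{\sim} \pi_J^{I(1)} \hookrightarrow \pi_J$ established just above) is compatible with the generators: i.e.\ I must verify that the image of $1\otimes T^*_{n_{w_\Delta w_J w_\Delta}}$ under $X_J \to \pi_J \to \pi_\emptyset$ agrees with its image under $X_J \to X_\emptyset \to \pi_\emptyset$. Since $X_\emptyset \to \pi_\emptyset$ and $\pi_\emptyset^{I(1)} \simeq X_\emptyset$ already give the case $J = \emptyset$ (where the square is identities, hence trivially commutes), it suffices to trace the generator through. The second step is to use functoriality: once the generator matches, both composites $X_J \to \pi_\emptyset$ are $(C[\Lambda(1)]_\omega,\mathcal{H})$-bimodule maps out of $X_J$, and $X_J$ is generated as such a bimodule by that one element, so the two composites coincide on all of $X_J$. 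This forces the square to commute.

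I would organise the verification of the generator-matching by passing through the description $(\cInd_K^G V_J)^{I(1)} \simeq V_J^{I(1)} \otimes_{\mathcal{H}_{\mathrm f}} \mathcal{H}$ and the isomorphism $C[\Lambda(1)]_\omega \otimes_{C[\Lambda^+(1)]_\omega} V_J^{I(1)} \otimes_{\mathcal{H}_{\mathrm f}}\mathcal{H} \simeq X_J$ of \cite[Proposition~3.11]{MR3666048}, under which the element $1\otimes T^*_{n_{w_\Delta w_J w_\Delta}}$ corresponds to a specific vector in $V_J^{I(1)}$ (roughly, the highest-weight line picked out by $w_J$). The embedding $\pi_J \hookrightarrow \pi_\emptyset$ of \cite[Definition~7.1]{MR3001801} is described precisely in terms of such highest-weight vectors of the $K$-representations $V_{\psi^{-1},J}$, so the comparison amounts to matching two explicit descriptions of the same vector.

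The main obstacle I expect is this last bookkeeping: reconciling the combinatorial normalisation of $T^*_{n_{w_\Delta w_J w_\Delta}}$ and the $E(\lambda)$-conventions used in \cite{arXiv:1406.1003_accepted} and \cite{MR3666048} with the $K$-representation-theoretic normalisation of the Herzig embedding $\pi_J \hookrightarrow \pi_\emptyset$ in \cite{MR3001801}, including keeping track of the fixed bases of $V_{\psi^{-1},J}^{I(1)}$ and the sign/length factors that appeared earlier (e.g.\ $f(E(\lambda)) = (-1)^{\ell(\lambda)}E(\lambda^{-1})$). Once the generator is correctly identified on both sides, the rest is formal from the bimodule-generation argument.
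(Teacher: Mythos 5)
Your overall strategy coincides with the paper's: fix one $\psi\in\omega$, note that every map in the square is $\mathcal{H}$-equivariant on $I(1)$-invariants and that the source is generated over $\mathcal{H}$ by a single element, so commutativity only needs to be checked on that generator, the case $J=\emptyset$ being known. The reduction is correct, and the final generation argument is exactly how the paper closes the proof.

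The problem is that the substance of the paper's proof is precisely the step you defer as ``bookkeeping'': the verification that the two images of the generator in $\Ind_{\overline{B}}^G(\cInd_{Z\cap K}^ZV_{\psi,\emptyset}^{I(1)})$ actually coincide. Concretely, with $w = w_\Delta w_J$ and $a = v_0\otimes 1\in V_{\psi,J}^{I(1)}\otimes_{\mathcal{H}_{\mathrm f}}\mathcal{H}$, one composite sends $a$ to $f_0T^*_{n_{w_\Delta w^{-1}}}T_{n_w}$ (using \cite[Proposition~3.11]{MR3666048}, the definition of $X_J\to X_\emptyset$, and the fact that $X_\emptyset\to\pi_\emptyset$ sends $1\otimes 1$ to $f_0$), while the other sends $\varphi_J$ to $\sum_{v\in W_{0,J}}f_0T_{n_{w_\Delta v}}$ by \cite[IV.7 Corollary]{MR3600042}. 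Matching these is a genuine identity, not a reconciliation of normalizations: it needs the expansion $f_0T^*_{n_{w_\Delta w^{-1}}} = \sum_{v\le w_\Delta w^{-1}}f_0T_{n_v}$ from \cite[IV.9 Proposition]{MR3600042}, the observation that $\{v\in W_0\mid v\le w_\Delta w_Jw_\Delta\} = w_\Delta W_{0,J}w_\Delta$, and a length-additivity/braid-relation computation giving $T_{n_{w_\Delta vw_\Delta}}T_{n_{w_\Delta w_J}} = T_{n_{w_\Delta vw_J}}$. Your proposal names the right objects (the fixed basis vectors, \cite[Definition~7.1]{MR3001801}, the isomorphism of \cite[Proposition~3.11]{MR3666048}) but does not carry out this computation, and explicitly flags it as an expected obstacle; since this is the decisive point of the lemma, the proof as written is incomplete there, even though the surrounding reduction is sound and identical in spirit to the paper's.
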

\begin{proof}
Fix $\psi^{-1}\in \omega$.
It is sufficient to prove that the following diagram is commutative:
\begin{equation}\label{eq:commutative diagram for each psi}
\begin{tikzcd}
V_{\psi,J}^{I(1)}\otimes_{\mathcal{H}_\mathrm{f}}\mathcal{H}\arrow[r]\arrow[d] & n_{w_\Delta}C[\Lambda(1)]_\psi\otimes_{\mathcal{A}}\mathcal{H}\arrow[d]\\
\cInd_K^GV_{\psi,J}\arrow[r] & \Ind_{\overline{B}}^G(\cInd_{Z\cap K}^ZV_{\psi,\emptyset}^{I(1)}).
\end{tikzcd}
\end{equation}
Note that this diagram is commutative when $J = \emptyset$.

Let $v_0\in V_{\psi,J}^{I(1)}$ be our fixed basis.
Define $\varphi_J\in\cInd_K^GV_{\psi,J}$ by $\supp \varphi_J = K$ and $\varphi_J(1) = v_0$.
Then the map $V_{\psi,J}^{I(1)}\otimes_{\mathcal{H}_f}\mathcal{H}\to \cInd_K^G(V_{\psi,J})$ is given by $v_0\otimes 1\mapsto \varphi_J$.
Define $f_0\in\Ind_{\overline{B}}^G(\cInd_{Z\cap K}^ZV_{\psi,\emptyset}^{I(1)})$ by $f_0$ is $I(1)$-invariant, $\supp f_0 = \overline{B}n_{w_0}I(1)$, $\supp f_0(n_{w_\Delta}^{-1}) = Z\cap K$ and $f_0(n_{w_\Delta}^{-1})(1) = v_0$.
Then the function corresponding to $\varphi_{\emptyset}$ under $\cInd_K^GV_\emptyset\to \Ind_{\overline{B}}^G(\cInd_{Z\cap K}^ZV_\emptyset^{I(1)})$ is $f_0T_{n_{w_\Delta}}$ \cite[IV.9 Proposition]{MR3600042}.

Set $w = w_\Delta w_J$.
Then $X_J = n_{w}C[\Lambda(1)]_\omega\otimes_{\mathcal{A}}\mathcal{H}$.
The homomorphism $V_{\psi,J}^{I(1)}\otimes_{\mathcal{H}_{\mathrm{f}}}\mathcal{H}\to n_wC[\Lambda(1)]_\psi\otimes_{\mathcal{A}}\mathcal{H}$ is given by $v_0\otimes 1\mapsto 1\otimes T_{n_w}$ \cite[Lemma~3.8, Lemma~3.10]{MR3666048}.
Therefore, combining the above description of $\cInd_K^GV_\emptyset\to \Ind_{\overline{B}}^G(\cInd_{Z\cap K}^ZV_\emptyset^{I(1)})$, the homomorphism $n_{w_\Delta}C[\Lambda(1)]_\psi\otimes_{\mathcal{A}}\mathcal{H}\to \Ind_{\overline{B}}^G(\cInd_{Z\cap K}^ZV_{\psi,\emptyset}^{I(1)})$ is given by $1\otimes 1\mapsto f_0$.
(Recall that the map makes \eqref{eq:commutative diagram for each psi} commutative for $J = \emptyset$.)

We consider the image of $a = v_0\otimes 1\in V_{\psi,J}^{I(1)}\otimes_{\mathcal{H}_{\mathrm{f}}}\mathcal{H}$ in $\Ind_{\overline{B}}^G(\cInd_{Z\cap K}^ZV_{\psi,\emptyset}^{I(1)})$ in the two ways.
The image of $a$ in $n_{w_\Delta}C[\Lambda(1)]_\psi\otimes_{\mathcal{A}}\mathcal{H}$ is $1\otimes T_{n_{w_\Delta w^{-1}}}^*T_{n_{w}}$ by \cite[Proposition~3.11]{MR3666048} and the definition of $X_J\to X_\emptyset$.
Therefore the image of $a$ under $V_{\psi,J}^{I(1)}\otimes_{\mathcal{H}_{\mathrm{f}}}\mathcal{H}\to n_{w_\Delta}C[\Lambda(1)]_\psi\otimes_{\mathcal{A}}\mathcal{H}\to \Ind_{\overline{B}}^G(\cInd_{Z\cap K}^ZV_{\psi,\emptyset}^{I(1)})$ is $f_0T_{n_{w_\Delta w^{-1}}}^*T_{n_w}$.

By \cite[IV.9 Proposition]{MR3600042} (for $J = \Delta$), we have $f_0T_{n_{w_\Delta w^{-1}}}^* = \sum_{v\le w_\Delta w^{-1}}f_0T_{n_v}$.
Since $w_\Delta w^{-1} = w_\Delta w_Jw_\Delta$, $\{v\in W_0\mid v\le w_\Delta w^{-1}\} = w_\Delta W_{0,J}w_\Delta$.
Hence $f_0T_{n_{w_\Delta w^{-1}}}^*T_{n_w} = \sum_{v\in W_{J,0}}f_0T_{n_{w_\Delta v w_\Delta}}T_{n_{w_\Delta w_J}}$.
We have $\ell(w_\Delta v w_\Delta\cdot w_\Delta w_J) = \ell(w_\Delta vw_J) = \ell(w_\Delta) - \ell(v w_J) = \ell(w_\Delta) - \ell(w_J) + \ell(v) = \ell(w_\Delta w_J) + \ell(w_\Delta vw_\Delta)$.
Hence $T_{n_{w_\Delta v w_\Delta}}T_{n_{w_\Delta w_J}} = T_{n_{w_\Delta v w_J}}$.
Therefore, replacing $v$ with $v w_J$, we get $f_0T_{n_{w_\Delta w^{-1}}}^*T_{n_w} = \sum_{v\in W_{J,0}}f_0T_{w_\Delta v}$.
This is the image of $\varphi_J$ in $\Ind_{\overline{B}}^G(\cInd_{Z\cap K}^ZV_{\psi,\emptyset}^{I(1)})$ by \cite[IV.7 Corollary]{MR3600042}.
Hence the diagram \eqref{eq:commutative diagram for each psi} is commutative if we start with $a$.
Since the element $a$ generates $V_{\psi,J}^{I(1)}\otimes_{\mathcal{H}_{\mathrm{f}}}\mathcal{H}$ as an $\mathcal{H}$-module, the diagram \eqref{eq:commutative diagram for each psi} is commutative.
\end{proof}
Therefore we may regard $\pi_J$ and $X_J$ as a subspace of $\pi_\emptyset$.
We have $\pi_\emptyset \simeq \Ind_{\overline{B}}^G(\cInd_{Z\cap K}^ZV_J^{I(1)})$.
By the same argument in the proof of Lemma~\ref{lem:Hecke algebra and the group ring}, we have $\cInd_{Z\cap K}^ZV_J^{I(1)}\simeq C[\Lambda(1)]_\omega$.
Here again we use our fixed basis.
Hence we have $\pi_\emptyset\simeq \Ind_{\overline{B}}^G C[\Lambda(1)]_\omega$.
We identify $\pi_J$ with the image in $\Ind_{\overline{B}}^G C[\Lambda(1)]_\omega$.

\begin{rem}\label{rem:I-invariant and parabolic induction}
By \cite[IV.7. Proposition]{MR3600042} and the decomposition $G = \bigcup_{w\in W_0}\overline{B}n_wI(1)$ implies that $(\Ind_{\overline{B}}^GC[\Lambda(1)]_\omega)^{I(1)} = \bigoplus_{w\in W_0}C[\Lambda(1)]_\omega f_0T_{n_w}$.
Since $X_\emptyset = \bigoplus_{w\in W_0}C[\Lambda(1)]_\omega\otimes T_{n_w}$ (see after the proof of Lemma~\ref{lem:X_J in C}) and $X_\emptyset\to \pi_\emptyset$ sends $1\otimes 1$ to $f_0$ (see the proof of the previous lemma), we have $X_\emptyset\simeq \pi_\emptyset^{I(1)}$.
Note that $\supp f_0T_{n_w} = \overline{B}n_{w_\Delta w}I(1)$~\cite[IV.7. Proposition]{MR3600042}.
\end{rem}

\subsection{Filtrations}
In this subsection, we use the following notation: for $A\subset W_0$, $\overline{B}A\overline{B} = \bigcup_{v\in A}\overline{B}n_v\overline{B}$.

For a subset $A \subset W_0$ which is open (namely, if $v_1\in W_0$, $v_2\in A$ and $v_1\ge v_2$ then $v_1\in A$), we put
\[
\pi_{\emptyset,A} = \{f\in \Ind_{\overline{B}}^GC[\Lambda(1)]_\psi\mid \supp f\subset \overline{B}A\overline{B}\}.
\]
We also put 
\[
X_{\emptyset,A} = \bigoplus_{v\in A}n_{w_\Delta}C[\Lambda(1)]\otimes T_{n_{w_\Delta v}}.
\]
\begin{lem}
Let $h\in X_\emptyset$.
Then $h\in X_{\emptyset,A}$ if and only if its image in $\pi_{\emptyset}$ is in $\pi_{\emptyset,A}$.
Namely we have $X_{\emptyset,A} = X_\emptyset\cap \pi_A$.
\end{lem}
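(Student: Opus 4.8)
The plan is to work entirely inside $\pi_\emptyset$, using the identification $X_\emptyset\simeq\pi_\emptyset^{I(1)}$ recorded in Remark~\ref{rem:I-invariant and parabolic induction}. Under that identification $\pi_\emptyset^{I(1)}=\bigoplus_{v\in W_0}C[\Lambda(1)]_\omega\, f_0T_{n_{w_\Delta v}}$, while by construction $X_{\emptyset,A}$ corresponds to the span of those basis vectors $f_0T_{n_{w_\Delta v}}$ with $v\in A$. By the support formula $\supp(f_0T_{n_w})=\overline{B}n_{w_\Delta w}I(1)$ (\cite[IV.7.~Proposition]{MR3600042}, also quoted in Remark~\ref{rem:I-invariant and parabolic induction}) we get $\supp(f_0T_{n_{w_\Delta v}})=\overline{B}n_vI(1)$, the double coset depending only on $v\in W_0$ since $Z\subset\overline{B}$. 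Moreover the $C[\Lambda(1)]_\omega$-action on $\pi_\emptyset=\Ind_{\overline B}^GC[\Lambda(1)]_\omega$ is fibrewise, hence support-preserving, and $C[\Lambda(1)]_\omega$ is free over itself, so $c\,f_0T_{n_{w_\Delta v}}$ has support exactly $\overline{B}n_vI(1)$ whenever $c\ne0$. Since $G=\bigsqcup_{v\in W_0}\overline{B}n_vI(1)$ is a disjoint decomposition into $(\overline{B},I(1))$-double cosets, an element $h=\sum_{v\in W_0}c_v\,f_0T_{n_{w_\Delta v}}$ of $\pi_\emptyset^{I(1)}$ satisfies $\supp h=\bigsqcup_{v:\,c_v\ne0}\overline{B}n_vI(1)$. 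Hence $h$ lies in $\pi_{\emptyset,A}$ if and only if $\overline{B}n_vI(1)\subset\overline{B}A\overline{B}$ for every $v$ with $c_v\ne0$, and the whole lemma is reduced to the claim that, for $v\in W_0$ and $A\subset W_0$ open,
\[
\overline{B}n_vI(1)\subset\overline{B}A\overline{B}\quad\Longleftrightarrow\quad v\in A .
\]

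For the implication ``$\Rightarrow$'' I would take the identity element of $I(1)$: this shows $n_v\in\overline{B}A\overline{B}=\bigsqcup_{v'\in A}\overline{B}n_{v'}\overline{B}$, and disjointness of the Bruhat cells forces $v\in A$. For ``$\Leftarrow$'', since $A$ is upward closed it contains $\{v'\in W_0\mid v'\ge v\}$ whenever $v\in A$, so it suffices to prove the containment
\[
\overline{B}\,n_v\,I(1)\ \subset\ \bigcup_{v'\ge v}\overline{B}n_{v'}\overline{B}.
\]
The plan for this is to use the Iwahori factorization $I(1)=(I(1)\cap\overline U)(I(1)\cap Z)(I(1)\cap U)$ adapted to $\overline B$, for which $I(1)\cap U$ is the small factor: conjugating the $\overline U$- and $Z$-factors across $n_v$ and absorbing into $\overline B$ the part landing in $\overline U$ yields $\overline{B}n_vI(1)\subset\overline{B}\,n_v\,(I(1)\cap U)$, after which one peels off the root subgroups occurring in $I(1)\cap U$ one at a time, using the rank-one identity $\overline{B}n_{v'}U_\alpha\subset\overline{B}n_{v'}\overline B\cup\overline{B}n_{v's_\alpha}\overline B$ and checking that each reflection so introduced is length-increasing relative to the current cell (which one sees from exactly which root subgroups survive the conjugation). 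Equivalently — and this is how I would actually present it — this containment is the geometric statement that the $I(1)$-orbit of $\overline{B}n_v$ in the flag variety $\overline{B}\backslash G$ meets only the Bruhat cells lying above $\overline{B}n_v\overline{B}$, which is part of the structure established in \cite[IV.7.~Proposition]{MR3600042}; I would simply cite it there.

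Combining the two directions, $h\in X_\emptyset$ maps into $\pi_{\emptyset,A}$ if and only if $c_v\ne0\Rightarrow v\in A$, i.e.\ if and only if $h\in X_{\emptyset,A}$; this is precisely the asserted equality $X_{\emptyset,A}=X_\emptyset\cap\pi_{\emptyset,A}$. The main obstacle is the second direction of the combinatorial equivalence, namely the inclusion $\overline{B}n_vI(1)\subset\bigcup_{v'\ge v}\overline{B}n_{v'}\overline{B}$; everything else is a bookkeeping translation between the basis of $\pi_\emptyset^{I(1)}$ and the support filtration.
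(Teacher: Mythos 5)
Your proposal follows the same route as the paper: identify $X_\emptyset$ with $\pi_\emptyset^{I(1)}$ via the basis $f_0T_{n_{w_\Delta v}}$ and the support formula, prove the converse direction by evaluating at $n_v$ and invoking disjointness of the Bruhat decomposition, and reduce the forward direction to the containment $\overline{B}n_vI(1)\subset\bigcup_{v'\ge v}\overline{B}n_{v'}\overline{B}$. The gap is in your proof of that containment. The claimed reduction $\overline{B}n_vI(1)\subset\overline{B}n_v(I(1)\cap U)$ is false for $v\ne 1$: conjugation by $n_v$ sends $I(1)\cap\overline{U}$ into the product of root subgroups attached to roots in $v(\Sigma^-)$, which contains positive roots, so this factor cannot be absorbed into $\overline{B}$. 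Already for $G=\mathrm{SL}_2(F)$ and $v=s$ the simple reflection (with $\overline{B}$ the lower triangular Borel), an element $n_s\bar{u}$ with $\bar{u}\in I(1)\cap\overline{U}$ nontrivial lies in $\overline{B}n_sI(1)$ but not in $\overline{B}n_sU\supset\overline{B}n_s(I(1)\cap U)$, as one checks on the $(1,1)$ matrix entry. Consequently the subsequent plan of peeling off the root subgroups of $I(1)\cap U$ starts from the wrong group: the factor that survives depends on $v$.

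What does hold, and what the paper uses, is the factorization of $I(1)$ adapted to $v$: $I(1)=(I(1)\cap n_v^{-1}\overline{B}n_v)(I(1)\cap n_v^{-1}Bn_v)$, which gives $\overline{B}n_vI(1)=\overline{B}n_v(I(1)\cap n_v^{-1}Bn_v)\subset\overline{B}Bn_v$, and then $\overline{B}Bn_v\subset\bigcup_{v'\ge v}\overline{B}n_{v'}\overline{B}$ by \cite[Lemma~2.4]{MR2928148}. (Note that $I(1)\cap n_v^{-1}Bn_v$ equals $I(1)\cap U$ only for $v=1$; for $v=w_\Delta$ it is the opposite factor.) Your fallback of citing \cite[IV.7 Proposition]{MR3600042} is not clearly sufficient either: what that proposition supplies, and what it is quoted for in Remark~\ref{rem:I-invariant and parabolic induction}, is the description of $\pi_\emptyset^{I(1)}$ and the supports $\supp f_0T_{n_w}=\overline{B}n_{w_\Delta w}I(1)$, not the closure-type containment above; if you prefer a citation to a proof, \cite[Lemma~2.4]{MR2928148} after the reduction just described is the correct reference. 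With this step repaired, the rest of your argument (the bookkeeping with the basis, the fact that the $C[\Lambda(1)]_\omega$-action is fibrewise, and the converse direction) coincides with the paper's proof; for the claim that $c\,f_0T_{n_{w_\Delta v}}$ has full support for $c\ne0$ you should say explicitly that the value of $f_0T_{n_{w_\Delta v}}$ at $n_v$ is invertible in $C[\Lambda(1)]_\omega$, not merely nonzero, since $C[\Lambda(1)]_\omega$ has zero divisors.
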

\begin{proof}
Let $H\in \pi_\emptyset$ be the image of $h$.
By the description of $X_\emptyset\to \pi_\emptyset$ (see Remark~\ref{rem:I-invariant and parabolic induction}), $h\in X_{\emptyset,A}$ if and only if $\supp H\subset \overline{B}AI(1)$.
For each $v\in A$, we have
\begin{align*}
\overline{B}vI(1) & = \overline{B}v(I(1)\cap v^{-1}\overline{B}v)(I(1)\cap v^{-1}Bv)\\
& = \overline{B}v(I(1)\cap v^{-1}Bv)\\
& \subset \overline{B}Bv\\
& \subset \bigcup_{v'\ge v}\overline{B}v'\overline{B}\\
& \subset \overline{B}A\overline{B}.
\end{align*}
Here we use \cite[Lemma~2.4]{MR2928148}.
Hence if $h\in X_{\emptyset,A}$ then $H\in \pi_{\emptyset,A}$.

Assume that $H\in \pi_{\emptyset,A}$ and $\supp(H)\cap \overline{B}vI(1) \ne\emptyset$ for $v\in W_0$.
Since $H$ is $I(1)$-invariant, we have $H(v)\ne 0$.
Therefore $v\in A$.
Hence $\supp(H)\subset \bigcup_{v\in A}\overline{B}vI(1)$.
We get $h\in X_{\emptyset,A}$.
\end{proof}

Set $X_{J,A} = X_J\cap X_{\emptyset,A}$ and $\pi_{J,A} = \pi_J\cap \pi_{\emptyset,A}$.
Let $w\in A$ be a minimal element and put $A' = A\setminus\{w\}$.
Then we have an embedding
\[
X_{\Delta,A}/X_{\Delta,A'}\hookrightarrow \pi_{\Delta,A}/\pi_{\Delta,A'}.
\]
For each $\alpha\in\Delta$, take a lift $a_\alpha\in \Lambda'_\alpha(1)$ of a generator of $\Lambda'_\alpha(1)/(Z_\kappa\cap \Lambda'_\alpha(1))$ such that $\langle \nu(a_\alpha),\alpha\rangle > 0$~\cite[III.4.]{MR3600042}.

The element $\#Z_\kappa^{-1}\sum_{\psi\in\omega}\sum_{t\in Z_\kappa}\psi(t)^{-1}\tau_{a_\alpha t}$ is in $C[\Lambda(1)]_\omega$ and does not depend on a choice of a lift (recall that $\psi$ is trivial on $Z_\kappa\cap \Lambda'_\alpha(1)$).
We denote it by $\tau_\alpha$.
Set $c_w = \prod_{w^{-1}(\alpha)>0}(1 - \tau_{\alpha})$.
Then as in \cite[V.8, Proposition]{MR3600042}, we have 
\begin{equation}\label{eq:G-rep, quotient of filtration}
\pi_{\Delta,A}/\pi_{\Delta,A'} = c_w(\pi_{\emptyset,A}/\pi_{\emptyset,A'}).
\end{equation}
We also have that $\pi_{\emptyset,A}/\pi_{\emptyset,A'}$ is free as $C[\Lambda(1)]_\omega$-module since it can be identified with the space of compactly supported functions on $\overline{B}\backslash \overline{B}w\overline{B}$ with values in $C[\Lambda(1)]_\omega$.
By the following lemma and \eqref{eq:G-rep, quotient of filtration}, $\pi_{\Delta,A}/\pi_{\Delta,A'}$ is also free.
\begin{lem}
The element $c_w\in C[\Lambda(1)]_\omega$ is not a zero divisor.
\end{lem}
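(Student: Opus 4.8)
The plan is to equip $C[\Lambda(1)]_\omega$ with a grading by a totally ordered abelian group in such a way that $c_w$ has the identity of the ring as its homogeneous component of smallest degree; once this is arranged, the statement follows from a leading-term comparison, on both sides. To set up the grading, first note that $a_\alpha$ is the image in $\Lambda(1)$ of an element of $Z\cap L_{\{\alpha\}}\cap G'$, so $\nu(a_\alpha)$ is a positive multiple of the coroot $\alpha^\vee$ (the positivity being exactly the content of the normalization $\langle\nu(a_\alpha),\alpha\rangle>0$; cf.\ \cite[III.4]{MR3600042}). Choose $\phi\in X^*(S)\otimes_\Z\R$ strictly dominant, i.e.\ $\langle\alpha^\vee,\phi\rangle>0$ for every $\alpha\in\Delta$; such $\phi$ exists because the simple coroots are linearly independent. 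Then $d\colon\Lambda(1)\to\R$, $d(\lambda)=\langle\nu(\lambda),\phi\rangle$, is a group homomorphism with $d(a_\alpha)>0$ for all $\alpha\in\Delta$, and $d$ kills $Z_\kappa$ (because $\nu$ does: $\nu$ vanishes on the compact subgroup $Z\cap K$). Let $\Gamma=d(\Lambda(1))\subseteq\R$; as a subgroup of $\R$ it is a totally ordered abelian group.

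Next, grade the group ring by $d$: for $\gamma\in\Gamma$ let $R_\gamma$ be the span of $\{\tau_\lambda\mid d(\lambda)=\gamma\}$, so $C[\Lambda(1)]=\bigoplus_{\gamma\in\Gamma}R_\gamma$, and $R_\gamma R_{\gamma'}\subseteq R_{\gamma+\gamma'}$ since $d$ is a homomorphism and $\tau_\lambda\tau_\mu=\tau_{\lambda\mu}$. The central idempotent $e_\omega=\#Z_\kappa^{-1}\sum_{\psi\in\omega}\sum_{t\in Z_\kappa}\psi(t)^{-1}\tau_t$ cutting out $C[\Lambda(1)]_\omega$ lies in $R_0$ (as $d|_{Z_\kappa}=0$), so $C[\Lambda(1)]_\omega=e_\omega C[\Lambda(1)]=\bigoplus_\gamma e_\omega R_\gamma$ is a $\Gamma$-graded ring with identity $e_\omega$ in degree $0$. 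Now $\tau_\alpha=e_\omega\tau_{a_\alpha}$ is homogeneous of degree $d(a_\alpha)>0$, so expanding $c_w=\prod_{w^{-1}(\alpha)>0}(e_\omega-\tau_\alpha)$ (the factor $1$ in the paper's $1-\tau_\alpha$ being the identity $e_\omega$ of this factor ring) one finds that its degree-$0$ component is exactly $e_\omega$, while every other term has degree $\sum_{\alpha\in S}d(a_\alpha)>0$ for some nonempty $S\subseteq\{\alpha\in\Delta\mid w^{-1}(\alpha)>0\}$. Thus $c_w=e_\omega+c'$ with $c'\in\bigoplus_{\gamma>0}e_\omega R_\gamma$.

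Finally I would conclude as follows. Let $0\neq x\in C[\Lambda(1)]_\omega$, write $x=\sum_\gamma x_\gamma$, and let $\gamma_0$ be the smallest $\gamma$ with $x_\gamma\neq0$ (it exists since $\Gamma$ is totally ordered and only finitely many $x_\gamma$ are nonzero). Since $e_\omega x=x$ and $c'x$ contributes only in degrees strictly above $\gamma_0$, the degree-$\gamma_0$ component of $c_wx=e_\omega x+c'x$ is $x_{\gamma_0}\neq0$; hence $c_wx\neq0$, and the symmetric computation (using $xe_\omega=x$) gives $xc_w\neq0$. So $c_w$ is neither a left nor a right zero divisor, which is the claim. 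I expect the only genuinely non-formal point to be the geometric input that $\nu(a_\alpha)$ is a positive multiple of $\alpha^\vee$ — this is exactly what lets a single functional $\phi$ push all the $a_\alpha$ strictly to the positive side, and it is essentially why $\Lambda'_\alpha(1)$, rather than some larger group, is the pertinent object here; everything past that is formal graded-ring bookkeeping.
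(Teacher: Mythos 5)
Your argument is correct, and it is essentially self-contained where the paper's proof is only the remark that the proof of \cite[Lemma~3.10]{arXiv:1406.1003_accepted} applies verbatim; a leading-term comparison with respect to a grading induced by $\nu$ is exactly the kind of argument that reference runs, so you have reconstructed the cited proof rather than found a genuinely different one. The formal part is fine: $d$ kills $Z_\kappa$ since $\nu$ vanishes on the compact group $Z\cap K$, the idempotent $e_\omega$ is the unit of $C[\Lambda(1)]_\omega$ and is homogeneous of degree $0$, each $\tau_\alpha$ is homogeneous of degree $d(a_\alpha)$, and once all $d(a_\alpha)>0$ the lowest-degree component of $c_wx$ (resp.\ $xc_w$) equals that of $x$, so $c_w$ is a two-sided non-zero-divisor.

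The one step to justify more carefully is the assertion that $\nu(a_\alpha)\in\R_{>0}\alpha^\vee$. It does not follow formally from ``$a_\alpha$ is the image of an element of $Z\cap L_{\{\alpha\}}\cap G'$'' with $G'$ the group generated by $U$ and $\overline{U}$ of $G$: for $G=\mathrm{GL}_3$ and $\alpha$ simple, $Z\cap L_{\{\alpha\}}\cap G'$ is the whole diagonal of $\mathrm{SL}_3(F)$, whose image under $\nu$ is two-dimensional (this also shows the quotient $\Lambda'_\alpha(1)/(Z_\kappa\cap\Lambda'_\alpha(1))$ would not be cyclic under that literal reading). What is true, and what \cite[III.4]{MR3600042} actually provides, is that $a_\alpha$ comes from the rank-one group generated by the root subgroups $U_{\pm\alpha}$ inside $L_{\{\alpha\}}$; this group lies in the derived group of $L_{\{\alpha\}}$, so $\nu$ of any of its elements pairs to zero with every character orthogonal to $\alpha^\vee$, giving $\nu(a_\alpha)\in\R\alpha^\vee$, and the normalization $\langle\nu(a_\alpha),\alpha\rangle>0$ then gives positivity. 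If you prefer not to invoke the proportionality at all, you can grade with the single functional $\lambda\mapsto\langle\nu(\lambda),\alpha\rangle$, which is positive on $a_\alpha$ by the normalization stated in the paper, to conclude that each factor $1-\tau_\alpha$ is a two-sided non-zero-divisor, and then use that a product of non-zero-divisors is a non-zero-divisor; this variant uses only data explicitly given in the text.
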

\begin{proof}
The same proof in \cite[Lemma~3.10]{arXiv:1406.1003_accepted} can apply.
\end{proof}

\begin{lem}\label{lem:H-mod, quotient of filtration}
We have $X_{\Delta,A}/X_{\Delta,A'} = c_w(X_{\emptyset,A}/X_{\emptyset,A'})$.
\end{lem}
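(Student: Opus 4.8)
The plan is to deduce the lemma from its representation-theoretic analogue \eqref{eq:G-rep, quotient of filtration}, by showing that all the objects in play sit compatibly inside the single ambient module $\pi_{\emptyset,A}/\pi_{\emptyset,A'}$. Write $R=C[\Lambda(1)]_\omega$. First I would record the identifications. By Remark~\ref{rem:I-invariant and parabolic induction} together with $X_\emptyset\simeq\pi_\emptyset^{I(1)}$, the embedding $X_{\emptyset,A}/X_{\emptyset,A'}\hookrightarrow\pi_{\emptyset,A}/\pi_{\emptyset,A'}$ identifies the source with the rank-one free $R$-submodule $R\cdot e_1$, where $e_1$ is the class of $f_0T_{n_{w_\Delta w}}$ (it has support the single coset $\overline{B}n_wI(1)$); since $\pi_{\emptyset,A}/\pi_{\emptyset,A'}$ is free over $R$ and $e_1$ is primitive (the characteristic function of a compact open set times a unit of $R$), $e_1$ extends to an $R$-basis. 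On the other side, $\pi_{\Delta,A}/\pi_{\Delta,A'}\hookrightarrow\pi_{\emptyset,A}/\pi_{\emptyset,A'}$ is injective with image $c_w(\pi_{\emptyset,A}/\pi_{\emptyset,A'})$ by \eqref{eq:G-rep, quotient of filtration}, and both $X_{\Delta,A}/X_{\Delta,A'}\hookrightarrow X_{\emptyset,A}/X_{\emptyset,A'}$ and $X_{\Delta,A}/X_{\Delta,A'}\hookrightarrow\pi_{\Delta,A}/\pi_{\Delta,A'}$ are injective, compatibly with $X_\emptyset\hookrightarrow\pi_\emptyset$ (this is the commutativity lemma proved for the square $X_J\hookrightarrow X_\emptyset$, $\pi_J\hookrightarrow\pi_\emptyset$; note also $\pi_{\Delta,A}^{I(1)}=X_{\Delta,A}$, since $\pi_{\Delta,A}^{I(1)}=(\pi_\Delta\cap X_\emptyset)\cap(\pi_{\emptyset,A}\cap X_\emptyset)=X_\Delta\cap X_{\emptyset,A}$). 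Thus the lemma becomes the equality $X_{\Delta,A}/X_{\Delta,A'}=c_w(R\cdot e_1)$ of $R$-submodules of $\pi_{\emptyset,A}/\pi_{\emptyset,A'}$, and I would prove it by the two inclusions.

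The inclusion ``$\subseteq$'' is then formal. For $h\in X_{\Delta,A}$, its image $\bar H\in\pi_{\emptyset,A}/\pi_{\emptyset,A'}$ — the same whether computed through $X_\emptyset\hookrightarrow\pi_\emptyset$ or through $\pi_\Delta\hookrightarrow\pi_\emptyset$, by the commutativity lemma — lies in $R\cdot e_1$ and simultaneously in $c_w(\pi_{\emptyset,A}/\pi_{\emptyset,A'})$. Expanding $\bar H=re_1=c_w\sum_i r_ie_i$ in the chosen basis and using that $c_w$ is not a zero divisor on the free $R$-module (by the preceding lemma), we get $r_i=0$ for $i\ne1$ and $r=c_wr_1$, hence $\bar H=c_w(r_1e_1)\in c_w(R\cdot e_1)$. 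In other words $(R\cdot e_1)\cap c_w(\pi_{\emptyset,A}/\pi_{\emptyset,A'})=c_w(R\cdot e_1)$, which is precisely what is needed.

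The inclusion ``$\supseteq$'' is the substantive one; since $X_{\Delta,A}/X_{\Delta,A'}$ is an $R$-submodule, it suffices to exhibit a single element of $X_{\Delta,A}$ whose image in $\pi_{\emptyset,A}/\pi_{\emptyset,A'}$ equals $c_we_1$, and then multiply by $R$. Such an element is furnished by the proof of \eqref{eq:G-rep, quotient of filtration} in \cite[V.8, Proposition]{MR3600042}: there the witness for ``$c_w$ times the generator lies in $\pi_{\Delta,A}/\pi_{\Delta,A'}$'' is built by applying Hecke operators — the expansion $f_0T_{n_u}^*=\sum_{v\le u}f_0T_{n_v}$ of \cite[IV.9, Proposition]{MR3600042} together with the quadratic relations $T_s^2=c_sT_s$ of \cite[4.2]{MR3484112} — to the image of the fixed generator $\varphi_\Delta$ of $\cInd_K^GV_\Delta$; it is therefore $I(1)$-invariant, because $\mathcal{H}$ preserves $\pi_\Delta^{I(1)}$, and supported in $\overline{B}A\overline{B}$, so it lies in $X_{\Delta,A}=\pi_{\Delta,A}^{I(1)}$, with cell-$w$ component $c_we_1$. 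I expect this transcription — rerunning the computation of \cite[V.8, Proposition]{MR3600042} while tracking both the support of the function and its $I(1)$-invariance, and checking the surviving coefficient is exactly $c_w=\prod_{w^{-1}(\alpha)>0}(1-\tau_\alpha)$ — to be the main obstacle; it introduces no idea beyond \cite{MR3600042}, but it is the technical heart of the argument. I would also point out explicitly that this route deliberately avoids the module $(\pi_{\emptyset,A}/\pi_{\emptyset,A'})^{I(1)}$, which can be strictly larger than $X_{\emptyset,A}/X_{\emptyset,A'}$: it is exactly the freeness of $\pi_{\emptyset,A}/\pi_{\emptyset,A'}$ over $R$ and the non-zero-divisor property of $c_w$ that let us bypass any exactness question for the $I(1)$-invariants functor.
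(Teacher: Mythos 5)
Your setup and your proof of the inclusion $X_{\Delta,A}/X_{\Delta,A'}\subseteq c_w(X_{\emptyset,A}/X_{\emptyset,A'})$ are correct (modulo a word of justification that the class $e_1$ of $f_0T_{n_{w_\Delta w}}$ really extends to an $R$-basis, $R=C[\Lambda(1)]_\omega$, via the identification of $\pi_{\emptyset,A}/\pi_{\emptyset,A'}$ with compactly supported $R$-valued functions on $\overline{B}\backslash\overline{B}n_w\overline{B}$), and this half uses the same ingredients as the paper: freeness of $\pi_{\emptyset,A}/\pi_{\emptyset,A'}$ over $R$ and the fact that $c_w$ is not a zero divisor. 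The genuine gap is the opposite inclusion $c_w(X_{\emptyset,A}/X_{\emptyset,A'})\subseteq X_{\Delta,A}/X_{\Delta,A'}$, which you yourself label the ``technical heart'': you never actually produce an element of $X_{\Delta,A}$ mapping to $c_we_1$, you only express the expectation that one can be extracted by re-running the computation behind \cite[V.8 Proposition]{MR3600042} while tracking supports and $I(1)$-invariance. That is exactly the point requiring proof, and it is not routine in the form you set it up: the subspaces $\pi_{\emptyset,A}$ are only $R$-submodules (they need not be $I(1)$-stable) and $(\cdot)^{I(1)}$ is not exact in characteristic $p$, so the fact that the class $c_we_1$ lies in $\pi_{\Delta,A}/\pi_{\Delta,A'}$ --- which is all that \eqref{eq:G-rep, quotient of filtration} provides --- does not by itself yield an $I(1)$-invariant representative in $\pi_{\Delta,A}$, and your sketch does not explain why the explicit element coming out of \cite{MR3600042} is $I(1)$-invariant with cell-$w$ component exactly $c_we_1$ and no non-invariant lower-cell contamination. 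As written, then, only one of the two inclusions is established.

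The comparison with the paper is instructive, because the paper distributes the difficulty the other way around. The direction you find hard is obtained there with no new computation: since $X_\Delta\simeq\pi_\Delta^{I(1)}$ and $X_\emptyset\simeq\pi_\emptyset^{I(1)}$, one has $X_{\Delta,A}=\pi_{\Delta,A}\cap X_{\emptyset,A}$, and the paper passes this to the graded pieces to get $X_{\Delta,A}/X_{\Delta,A'}=(\pi_{\Delta,A}/\pi_{\Delta,A'})\cap(X_{\emptyset,A}/X_{\emptyset,A'})$ inside $\pi_{\emptyset,A}/\pi_{\emptyset,A'}$; the containment of $c_w(X_{\emptyset,A}/X_{\emptyset,A'})$ in that intersection is then immediate from \eqref{eq:G-rep, quotient of filtration}. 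The paper's actual work goes into the direction you dispatched with the basis argument: there it is done by lifting $H$ to $c_wH'$ with $H'\in\pi_{\emptyset,A}$, correcting $H'$ so that $c_wH'\in X_{\emptyset,A}$, and using that $c_w$ is not a zero divisor \emph{pointwise on the values} of the function $H'$ to transfer $I(1)$-invariance from $c_wH'$ to $H'$, whence $H'\in X_{\emptyset,A}$ and $H\in c_w(X_{\emptyset,A}/X_{\emptyset,A'})$. So to repair your proposal, either carry out your witness construction in full (a nontrivial re-derivation of \cite[V.8 Proposition]{MR3600042} with $I(1)$-invariance bookkeeping), or, more economically, replace your second half by the intersection argument based on $X_{\Delta,A}=\pi_{\Delta,A}\cap X_{\emptyset,A}$, which is how the paper closes precisely the step you left open.
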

\begin{proof}
Since $X_{\Delta,A} = \pi_{\Delta,A}\cap X_{\emptyset,A}$, we have 
\[
X_{\Delta,A}/X_{\Delta,A'} = \pi_{\Delta,A}/\pi_{\Delta,A'}\cap X_{\emptyset,A}/X_{\emptyset,A'}
\]
and the right hand side is 
\[
c_w(\pi_{\emptyset,A}/\pi_{\emptyset,A'})\cap X_{\emptyset,A}/X_{\emptyset,A'}.
\]
Let $H$ be in this set.
Since $\pi_{\emptyset,A}/\pi_{\emptyset,A'}$ is a free $C[\Lambda(1)]_\omega$-module, the exact sequence $0\to \pi_{\emptyset,A'}\to \pi_{\emptyset,A}\to \pi_{\emptyset,A}/\pi_{\emptyset,A'}\to 0$ splits.
Hence $\pi_\emptyset\simeq \pi_{\emptyset,A'}\oplus(\pi_{\emptyset,A}/\pi_{\emptyset,A'})$.
Therefore $c_w\pi_{\emptyset,A}\simeq c_w\pi_{\emptyset,A'}\oplus c_w(\pi_{\emptyset,A}/\pi_{\emptyset,A'})$.
Hence $c_w(\pi_{\emptyset,A}/\pi_{\emptyset,A'}) \simeq (c_w\pi_{\emptyset,A})/(c_w\pi_{\emptyset,A'})$.
Hence there exists $H'\in \pi_{\emptyset,A}$ such that $H$ is the image of $c_wH'$.
Since $H = c_wH'\in X_{\Delta,A}/X_{\Delta,A'}$, there exists $h\in X_{\Delta,A}$ such that $c_wH' - h$ is zero in $X_{\Delta,A}/X_{\Delta,A'}$.
In particular it is zero in $c_w(\pi_{\emptyset,A}/\pi_{\emptyset,A'}) = (c_w\pi_{\emptyset,A})/(c_w\pi_{\emptyset,A'})$.
Therefore there exists $H''\in \pi_{\emptyset,A'}$ such that $c_wH' - h = c_wH''$.
Replacing $H'$ with $H' - H''$, we may assume $c_wH'\in X_{\emptyset,A}$.
Recall that $H'$ is a function with values in $C[\Lambda(1)]_\omega$.
Since the element $c_w$ is not a zero divisor in $C[\Lambda(1)]_\omega$, $c_wH'\in \pi_{\emptyset,A}$ implies $H'\in \pi_{\emptyset,A}$.
Since $c_wH'\in X_{\emptyset}$, $c_wH'$ is $I(1)$-invariant.
Hence $H'$ is also $I(1)$-invariant, again since $c_w$ is not a zero divisor.
Therefore $H'\in \pi_{\emptyset}^{I(1)} = X_\emptyset$.
Hence $H'\in X_\emptyset\cap \pi_{\emptyset,A} = X_{\emptyset,A}$.
Therefore $H\in c_w(X_{\emptyset,A}/X_{\emptyset,A'})$.
The reverse inclusion $c_w(\pi_{\emptyset,A}/\pi_{\emptyset,A'})\cap X_{\emptyset,A}/X_{\emptyset,A'}\supset  c_w(X_{\emptyset,A}/X_{\emptyset,A'})$ is obvious.
We get the lemma.
\end{proof}

\subsection{Proof of Lemma~\ref{lem:injective lemma, final form}}
Let $A,A',w$ be as in the previous subsection.
\begin{lem}\label{lem:splits, from the filtration}
The exact sequences of $C[\Lambda(1)]_\omega$-modules
\begin{gather*}
0\to \pi_{\Delta,A'}\to \pi_{\Delta,A}\to \pi_{\Delta,A}/\pi_{\Delta,A'}\to 0,\\
0\to X_{\Delta,A'}\to X_{\Delta,A}\to X_{\Delta,A}/X_{\Delta,A'}\to 0
\end{gather*}
split
\end{lem}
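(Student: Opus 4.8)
The plan is to prove both splittings at once, since the two short exact sequences have the same formal shape: a short exact sequence $0\to N'\to N\to N/N'\to 0$ of $C[\Lambda(1)]_\omega$-modules splits as soon as the quotient $N/N'$ is projective, and I will in fact verify that both quotients $\pi_{\Delta,A}/\pi_{\Delta,A'}$ and $X_{\Delta,A}/X_{\Delta,A'}$ are \emph{free} $C[\Lambda(1)]_\omega$-modules.

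First I would record that the corresponding graded pieces of the ``$\emptyset$''-filtrations are free. For $\pi_{\emptyset,A}/\pi_{\emptyset,A'}$ this was already noted above, since it is the space of compactly supported functions on the single cell $\overline{B}\backslash\overline{B}w\overline{B}$ with values in $C[\Lambda(1)]_\omega$. For $X_{\emptyset,A}/X_{\emptyset,A'}$ it follows from the direct-sum description of $X_{\emptyset,A}$: as $A'=A\setminus\{w\}$ is again open (removing the minimal element $w$ from the open set $A$ leaves an open set), $X_{\emptyset,A'}$ is the sub-sum over $v\in A'$, so $X_{\emptyset,A}/X_{\emptyset,A'}$ is the single summand indexed by $v=w$, which is free of rank one over $C[\Lambda(1)]_\omega$.

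Next I would pass from the ``$\emptyset$'' pieces to the ``$\Delta$'' pieces using \eqref{eq:G-rep, quotient of filtration} and Lemma~\ref{lem:H-mod, quotient of filtration}, which give $\pi_{\Delta,A}/\pi_{\Delta,A'}=c_w(\pi_{\emptyset,A}/\pi_{\emptyset,A'})$ and $X_{\Delta,A}/X_{\Delta,A'}=c_w(X_{\emptyset,A}/X_{\emptyset,A'})$. Since $c_w$ is not a zero divisor in $C[\Lambda(1)]_\omega$ (by the lemma just preceding Lemma~\ref{lem:H-mod, quotient of filtration}), multiplication by $c_w$ is injective on any free $C[\Lambda(1)]_\omega$-module --- one checks this one coordinate at a time, which is harmless even if $C[\Lambda(1)]_\omega$ is non-commutative. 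Hence multiplication by $c_w$ exhibits $\pi_{\Delta,A}/\pi_{\Delta,A'}$ (resp.\ $X_{\Delta,A}/X_{\Delta,A'}$) as isomorphic to the free module $\pi_{\emptyset,A}/\pi_{\emptyset,A'}$ (resp.\ $X_{\emptyset,A}/X_{\emptyset,A'}$); in particular it is free, and so both short exact sequences split.

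I do not expect a genuine obstacle: every substantive ingredient --- the identification of the graded pieces with $c_w(\,\cdot\,)$, the freeness of the ``$\emptyset$'' pieces, and the fact that $c_w$ is a non-zero-divisor --- is already available, and what remains is the elementary fact that a short exact sequence with projective quotient splits. The only point meriting a moment's care is to keep straight that ``$c_w(\pi_{\emptyset,A}/\pi_{\emptyset,A'})$'' genuinely denotes the image of multiplication by $c_w$ on a free module, rather than an abstract subquotient --- but this is exactly what \eqref{eq:G-rep, quotient of filtration} and Lemma~\ref{lem:H-mod, quotient of filtration} supply.
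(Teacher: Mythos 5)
Your proposal is correct and follows essentially the same route as the paper: both deduce freeness of the quotients from the identifications $\pi_{\Delta,A}/\pi_{\Delta,A'} = c_w(\pi_{\emptyset,A}/\pi_{\emptyset,A'})$ and $X_{\Delta,A}/X_{\Delta,A'} = c_w(X_{\emptyset,A}/X_{\emptyset,A'})$ together with the freeness of the ``$\emptyset$'' graded pieces and the fact that $c_w$ is not a zero divisor, and then conclude that the sequences split. Your only addition is spelling out explicitly that $X_{\emptyset,A}/X_{\emptyset,A'}$ is free of rank one, which the paper leaves implicit.
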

\begin{proof}
By \eqref{eq:G-rep, quotient of filtration} and from the fact that $\pi_{\emptyset,A}/\pi_{\emptyset,A'}$ is free, $\pi_{\Delta,A}/\pi_{\Delta,A'}$ is also free.
Hence the first exact sequence splits.
Using Lemma~\ref{lem:H-mod, quotient of filtration}, the same argument can apply for the second sequence.
\end{proof}
\begin{lem}\label{lem:X to pi has a section, on quotient}
The inclusion $X_{\Delta,A}/X_{\Delta,A'}\hookrightarrow \pi_{\Delta,A}/\pi_{\Delta,A'}$ has a section as $C[\Lambda(1)]_\omega$-modules.
\end{lem}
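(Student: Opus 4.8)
The plan is to realise the section as an \textquotedblleft evaluation at a point\textquotedblright\ map inside the parabolically induced module $\Ind_{\overline{B}}^GC[\Lambda(1)]_\omega$. Recall that $\pi_{\emptyset,A}/\pi_{\emptyset,A'}$ is identified with the space of compactly supported $C[\Lambda(1)]_\omega$-valued functions on $\overline{B}\backslash\overline{B}w\overline{B}$, with $C[\Lambda(1)]_\omega$ acting pointwise on values; in particular, for every $x_0\in\overline{B}\backslash\overline{B}w\overline{B}$ the evaluation $\mathrm{ev}_{x_0}\colon f\mapsto f(x_0)$ is a $C[\Lambda(1)]_\omega$-linear map $\pi_{\emptyset,A}/\pi_{\emptyset,A'}\to C[\Lambda(1)]_\omega$. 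By Lemma~\ref{lem:H-mod, quotient of filtration} and \eqref{eq:G-rep, quotient of filtration} we have $X_{\Delta,A}/X_{\Delta,A'}=c_w(X_{\emptyset,A}/X_{\emptyset,A'})$ and $\pi_{\Delta,A}/\pi_{\Delta,A'}=c_w(\pi_{\emptyset,A}/\pi_{\emptyset,A'})$; since $\pi_{\emptyset,A}/\pi_{\emptyset,A'}$ is free and $c_w$ is not a zero divisor, multiplication by $c_w$ is an injective $C[\Lambda(1)]_\omega$-linear map compatible with the inclusion $X_{\emptyset,A}/X_{\emptyset,A'}\hookrightarrow\pi_{\emptyset,A}/\pi_{\emptyset,A'}$. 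Hence it suffices to produce a $C[\Lambda(1)]_\omega$-linear section of this last inclusion and then transport it by multiplication by $c_w$.

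The heart of the matter is to locate the image of $X_{\emptyset,A}/X_{\emptyset,A'}$ inside the function space. By the discussion following Lemma~\ref{lem:X_J in C}, $X_{\emptyset,A}/X_{\emptyset,A'}$ is the free rank-one $C[\Lambda(1)]_\omega$-module $Y_{w_\Delta w}$, generated by the image of $1\otimes T^{*}_{n_{w_\Delta w}}$ (the passage from the $T$-basis used to define $X_{\emptyset,A}$ to the $T^{*}$-basis is harmless because the corresponding change of basis is upper triangular with respect to the ordering defining $A$). By Remark~\ref{rem:I-invariant and parabolic induction} and the explicit description of $X_\emptyset\to\pi_\emptyset$, this generator maps to the $I(1)$-invariant function $g$ supported on $\overline{B}n_wI(1)\cap(\overline{B}\backslash\overline{B}w\overline{B})$ and taking there, under our fixed identifications, a \emph{unit} value of $C[\Lambda(1)]_\omega$. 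The relevant inputs are $\supp f_0T_{n_v}=\overline{B}n_{w_\Delta v}I(1)$, the fact that the Hecke-module products occurring here are all length-additive so that no scalars intervene, and the fact that $\overline{B}n_vI(1)$ meets $\overline{B}\backslash\overline{B}w\overline{B}$ only for $v\le w$, so that the lower order terms of $T^{*}_{n_{w_\Delta w}}$ die modulo $\pi_{\emptyset,A'}$; cf.\ \cite[IV.7]{MR3600042}. Since $X_{\emptyset,A}/X_{\emptyset,A'}\ne 0$ we have $g\ne 0$, so we may fix $x_0\in\supp g$; then $\mathrm{ev}_{x_0}(g)$ is a unit, and $f\mapsto\mathrm{ev}_{x_0}(f)\,\mathrm{ev}_{x_0}(g)^{-1}\,g$ is a $C[\Lambda(1)]_\omega$-linear map $\pi_{\emptyset,A}/\pi_{\emptyset,A'}\to X_{\emptyset,A}/X_{\emptyset,A'}$ restricting to the identity on $X_{\emptyset,A}/X_{\emptyset,A'}$, i.e.\ the desired section.

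Finally, if $s$ denotes this section of $X_{\emptyset,A}/X_{\emptyset,A'}\hookrightarrow\pi_{\emptyset,A}/\pi_{\emptyset,A'}$, then $c_wm\mapsto c_ws(m)$ is well defined (multiplication by $c_w$ being injective on the free module $\pi_{\emptyset,A}/\pi_{\emptyset,A'}$) and is a $C[\Lambda(1)]_\omega$-linear section of $X_{\Delta,A}/X_{\Delta,A'}\hookrightarrow\pi_{\Delta,A}/\pi_{\Delta,A'}$, which proves the lemma. The main obstacle is entirely in the middle step: one must match the various descriptions of $X_{\emptyset,A}/X_{\emptyset,A'}$ --- through the $T^{*}$-basis, through $Y_{w_\Delta w}$, and through its image in $\Ind_{\overline{B}}^GC[\Lambda(1)]_\omega$ --- precisely enough to be certain that this image is a \textquotedblleft characteristic-function\textquotedblright\ element whose value at $x_0$ is a unit; after that the argument is formal module theory.
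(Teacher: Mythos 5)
Your proof is correct and follows essentially the same route as the paper: the paper also constructs the section first at the $\emptyset$-level, as an evaluation on the cell attached to $w$ (using $X_{\emptyset,A}/X_{\emptyset,A'}\simeq C[\Lambda(1)]_\omega$ via $f\mapsto f(n_w)$), and then transports it to the $\Delta$-level by multiplication by $c_w$ via \eqref{eq:G-rep, quotient of filtration} and Lemma~\ref{lem:H-mod, quotient of filtration}. Your extra verifications (unit value of the generator, injectivity of $c_w$ making $c_wm\mapsto c_ws(m)$ well defined) are just a more detailed write-up of the same argument.
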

\begin{proof}
First we construct a section of $X_{\emptyset,A}/X_{\emptyset,A'}\to \pi_{\emptyset,A}/\pi_{\emptyset,A'}$.
Recall that $X_{\emptyset,A} = \pi_{\emptyset,A}^{I(1)}$.
Note that $X_{\emptyset,A}/X_{\emptyset,A'}\simeq C[\Lambda(1)]_\omega$ and the isomorphism is given by $f\mapsto f(w)$.
For $H\in \pi_{\emptyset,A}$, consider $H'\in \pi_{\emptyset,A}$ which is $I(1)$-invariant, $\supp(H') = \overline{B}vI(1)$ and $H'(v) = f(v)$.
Then $H\mapsto H'$ gives a section of $X_{\emptyset,A}/X_{\emptyset,A'}\to \pi_{\emptyset,A}/\pi_{\emptyset,A'}$.
Multiplying $c_w$ and using \eqref{eq:G-rep, quotient of filtration}, Lemma~\ref{lem:H-mod, quotient of filtration}, we get a section of $X_{\Delta,A}/X_{\Delta,A'}\to \pi_{\Delta,A}/\pi_{\Delta,A'}$.
\end{proof}

\begin{proof}[Proof of Lemma~\ref{lem:injective lemma, final form}]
Set $\pi^M_A = M\otimes_{C[\Lambda(1)]_\omega}\pi_{\Delta,A}$ and $X^M_A = M\otimes_{C[\Lambda(1)]_\omega}X_{\Delta,A}$.
Then by Lemma~\ref{lem:splits, from the filtration}, $\pi^M_{A'}$ (resp.\ $X^M_{A'}$) is a subspace of $\pi_A^M$ (resp.\ $X_A^M$).
By Lemma~\ref{lem:X to pi has a section, on quotient}, $X^M_A/X^M_{A'}\to \pi^M_A/\pi^M_{A'}$ is injective.

We prove that $X^M_{A}\to \pi^M_{A}$ is injective by induction on $\#A$.
We have the following diagram
\[
\begin{tikzcd}
0 \arrow[r] & X^M_{A'}\arrow[r]\arrow[d] & X^M_A\arrow[r]\arrow[d] & X^M_A/X^M_{A'}\arrow[r]\arrow[d] & 0\\
0 \arrow[r] & \pi^M_{A'}\arrow[r] & \pi^M_A\arrow[r] & \pi^M_A/\pi^M_{A'}\arrow[r] & 0.
\end{tikzcd}
\]
The homomorphism $X_{A'}^M\to \pi_{A'}^M$ is injective by inductive hypothesis and $X^M_A/X^M_{A'}\to \pi^M_A/\pi^M_{A'}$ is injective as we have seen.
Hence $X^M_A\to \pi^M_A$ is injective.
Setting $A = W_0$, we get the lemma.
\end{proof}

\section{Theorem}\label{sec:Theorem}
Let $\mathcal{C}_{\mathrm{f}}$ be the full-subcategory of $\mathcal{C}$ consisting of finite-dimensional modules.
Note that this category is closed under submodules, quotients and extensions.
\begin{thm}
Let $M\in \mathcal{C}_{\mathrm{f}}$.
Then $(X\otimes_{\mathcal{H}} \cInd_{I(1)}^G\trivrep)^{I(1)} \simeq X$.
\end{thm}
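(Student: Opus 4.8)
The plan is to reduce the statement to Theorem~\ref{thm:injective} together with the finiteness coming from $\mathcal{C}_{\mathrm{f}}$. First I would observe that for any $\mathcal{H}$-module $M$ there is a canonical $\mathcal{H}$-module map $M\to (M\otimes_{\mathcal{H}}\cInd_{I(1)}^G\trivrep)^{I(1)}$, sending $m$ to $m\otimes \mathbf{1}_{I(1)}$, using that $\cInd_{I(1)}^G\trivrep$ is generated by its canonical $I(1)$-fixed vector and that $(\cInd_{I(1)}^G\trivrep)^{I(1)}\simeq\mathcal{H}$ as a left $\mathcal{H}$-module. By Theorem~\ref{thm:injective} this map is injective once $M\in\mathcal{C}$, hence in particular for $M\in\mathcal{C}_{\mathrm{f}}$. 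So the entire content of the present theorem is the \emph{surjectivity} of this map, i.e.\ that every $I(1)$-fixed vector in $M\otimes_{\mathcal{H}}\cInd_{I(1)}^G\trivrep$ comes from $M$.

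For surjectivity I would argue by induction on $\dim_C M$. Pick a simple $\mathcal{H}$-submodule $N\subset M$ (which lies in $\mathcal{C}_{\mathrm{f}}$ since $\mathcal{C}_{\mathrm{f}}$ is closed under submodules) with quotient $Q=M/N$, also in $\mathcal{C}_{\mathrm{f}}$. Since $\cInd_{I(1)}^G\trivrep$ is free as a right $\mathcal{H}$-module (it is $\bigoplus$ over $I(1)\backslash G$ of copies, with the $\mathcal{H}$-action being right translation), the functor $(-)\otimes_{\mathcal{H}}\cInd_{I(1)}^G\trivrep$ is exact, so we get a short exact sequence of smooth $G$-representations; taking $I(1)$-invariants, which is exact in characteristic-$p$ smooth representation theory here because $I(1)$ is a pro-$p$ group and we may instead use that $(-)^{I(1)}$ is left exact and combine with the snake lemma, yields an exact sequence
\[
0\to N^{\flat}\to M^{\flat}\to Q^{\flat}\to H^{1},
\]
where $X^{\flat}=(X\otimes_{\mathcal{H}}\cInd_{I(1)}^G\trivrep)^{I(1)}$ and $H^1$ is a suitable cohomology term. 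The inductive hypothesis gives $N^{\flat}\simeq N$ and $Q^{\flat}\simeq Q$, and a diagram chase with the injectivity from Theorem~\ref{thm:injective} forces $M^{\flat}\simeq M$ provided the relevant connecting map vanishes on the image of $Q$. The base case is $M$ simple, where I would invoke the classification input cited in the introduction (via \cite{arXiv:1703.10384}): for a simple $M\in\mathcal{C}$ one knows $\dim(M\otimes_{\mathcal{H}}\cInd_{I(1)}^G\trivrep)^{I(1)}=\dim M$, which combined with the injectivity already proved gives an isomorphism.

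The main obstacle I anticipate is controlling the cokernel/connecting term in the invariants sequence — i.e.\ ruling out extra $I(1)$-fixed vectors appearing on the quotient $Q$ that do not lift. The cleanest way around this is probably to avoid cohomology altogether and instead argue as follows: by the injectivity theorem $M^{\flat}\supseteq M$ and likewise for the simple constituents, and $\dim M^{\flat}$ is bounded above by $\sum$ of $\dim$'s of the $\flat$ of the constituents, which by induction equals $\dim M$; hence $M^{\flat}=M$. This replaces the delicate exactness discussion by a dimension count, using only that $(-)^{I(1)}$ is additive on the finite-length filtration and Theorem~\ref{thm:injective}. The one thing to check carefully is that $\dim(M\otimes_{\mathcal{H}}\cInd_{I(1)}^G\trivrep)^{I(1)}<\infty$ for $M\in\mathcal{C}_{\mathrm{f}}$ in the first place; this follows from the explicit description of $M\otimes_{\mathcal{A}}\mathcal{H}$ and the structure of $\pi_J$ established in the previous section, since $M\otimes_{\mathcal{H}}\cInd_{I(1)}^G\trivrep$ is a subquotient of finitely many copies of the $\pi_J$, each of which has finite-dimensional $I(1)$-invariants by Lemma~\ref{lem:Hecke algebra and the group ring} and the finite-dimensionality of $M$.
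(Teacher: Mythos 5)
Your overall strategy (injectivity from Theorem~\ref{thm:injective}, simple case from the classification results, then induction on $\dim M$ via a submodule and quotient) is the same as the paper's, but the step on which everything hinges is wrong as written. You assert that $\cInd_{I(1)}^G\trivrep$ is free over $\mathcal{H}$, so that $(-)\otimes_{\mathcal{H}}\cInd_{I(1)}^G\trivrep$ is exact and $N\otimes_{\mathcal{H}}\cInd_{I(1)}^G\trivrep\to M\otimes_{\mathcal{H}}\cInd_{I(1)}^G\trivrep$ is automatically injective. No such freeness/flatness is available: the universal module is not known (and is not expected) to be flat over $\mathcal{H}$ in the mod $p$ setting, and the claimed decomposition indexed by $I(1)\backslash G$ with ``right translation'' action is not an $\mathcal{H}$-module decomposition; the $\mathcal{H}$-action is by convolution. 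If flatness held, most of the delicacy of this subject (e.g.\ Ollivier's counterexample recalled in the introduction) would disappear. Likewise, your statement that taking $I(1)$-invariants is exact because $I(1)$ is pro-$p$ is backwards: in characteristic $p$ invariants under a pro-$p$ group are only left exact; what \emph{is} true, and what the paper exploits, is that a nonzero smooth representation of $I(1)$ over a field of characteristic $p$ has nonzero $I(1)$-invariants. The paper's proof uses exactly this to supply the missing injectivity: with $\pi=\Ker\bigl(M'\otimes_{\mathcal{H}}\cInd_{I(1)}^G\trivrep\to M\otimes_{\mathcal{H}}\cInd_{I(1)}^G\trivrep\bigr)$, left exactness of invariants together with the induction hypothesis $M'\simeq(M'\otimes_{\mathcal{H}}\cInd_{I(1)}^G\trivrep)^{I(1)}$ and the injectivity $M\hookrightarrow(M\otimes_{\mathcal{H}}\cInd_{I(1)}^G\trivrep)^{I(1)}$ of Theorem~\ref{thm:injective} force $\pi^{I(1)}=0$, hence $\pi=0$. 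Only after this does one have a short exact sequence of $G$-representations, and then a diagram chase using left exactness of invariants (no $H^1$ and no surjectivity onto the quotient's invariants is needed) gives the isomorphism for $M$.

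Your fallback ``dimension count'' does not repair this, because it implicitly needs the same injectivity: without it, the image of $N\otimes_{\mathcal{H}}\cInd_{I(1)}^G\trivrep$ in $M\otimes_{\mathcal{H}}\cInd_{I(1)}^G\trivrep$ is a proper quotient of $N\otimes_{\mathcal{H}}\cInd_{I(1)}^G\trivrep$, and in characteristic $p$ the invariants of a quotient are not bounded by the invariants of the original representation, so you cannot bound $\dim(M\otimes_{\mathcal{H}}\cInd_{I(1)}^G\trivrep)^{I(1)}$ by $\dim N+\dim Q$. (Once the injectivity is established as above, the finiteness discussion in your last paragraph is also unnecessary.) So the missing idea is precisely the pro-$p$ invariants argument killing the kernel after tensoring; with it your outline collapses to the paper's proof.
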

\begin{proof}
The theorem is true for simple $X$ by the main theorem of \cite{arXiv:1406.1003_accepted}, \cite[Theorem~4.17]{arXiv:1703.10384} and \cite[Theorem~5.11]{arXiv:1703.10384}.
We prove the theorem by induction on $\dim(M)$.

Assume that $M$ is not simple and let $M'$ be a proper nonzero submodule of $M$.
Let $\pi = \Ker(M'\otimes_{\mathcal{H}}\cInd_{I(1)}^G\trivrep\to M\otimes_{\mathcal{H}}\cInd_{I(1)}^G\trivrep)$.
By Theorem~\ref{thm:injective}, $M\to (M\otimes_{\mathcal{H}}\cInd_{I(1)}^G\trivrep)^{I(1)}$ is injective.
Then we have
\[
\begin{tikzcd}
0\arrow[r] & \pi^{I(1)} \arrow[r] & (M'\otimes_{\mathcal{H}}\cInd_{I(1)}^G\trivrep)^{I(1)} \arrow[r] & (M\otimes_{\mathcal{H}}\cInd_{I(1)}^G\trivrep)^{I(1)}\\
&& M'\arrow[u,"\wr"]\arrow[r,hookrightarrow] & M\arrow[u,hookrightarrow].
\end{tikzcd}
\]
Hence $\pi^{I(1)} = 0$.
Since $I(1)$ is a pro-$p$ group, $\pi = 0$.
Hence $M'\otimes_{\mathcal{H}}\cInd_{I(1)}^G\trivrep\to M\otimes_{\mathcal{H}}\cInd_{I(1)}^G\trivrep$ is injective.
Set $M'' = M/M'$.
Then we have a commutative diagram
\[
\begin{tikzcd}
0 \arrow[d] &0 \arrow[d] \\
(M' \otimes_\mathcal{H}\cInd_{I(1)}^G\trivrep)^{I(1)}\arrow[d] & M'  \arrow[d]\arrow[l,"\sim"'] \\
(M  \otimes_\mathcal{H}\cInd_{I(1)}^G\trivrep)^{I(1)}\arrow[d] & M   \arrow[d]\arrow[l]       \\
(M''\otimes_\mathcal{H}\cInd_{I(1)}^G\trivrep)^{I(1)}          & M'' \arrow[d]\arrow[l,"\sim"'] \\
& 0.
\end{tikzcd}
\]
with exact columns.
Therefore $M\to (M\otimes_\mathcal{H}\cInd_{I(1)}^G\trivrep)^{I(1)}$ is isomorphic.
\end{proof}
\begin{cor}
Let $\mathcal{C}_{G,\mathrm{f}}$ be the category of representations of $G$ consisting of the following objects:
\begin{itemize}
\item has a finite length.
\item any irreducible subquotient is a subquotient of $\Ind_B^G\sigma$ for a irreducible representation $\sigma$ of $Z$.
\item is generated by $I(1)$-invariants.
\end{itemize}
Then $\mathcal{C}_{\mathrm{f}}\simeq \mathcal{C}_{G,\mathrm{f}}$.
The equivalence is given by $\pi\to \pi^{I(1)}$ and $M\mapsto M\otimes_{\mathcal{H}}\cInd_{I(1)}^GM$.
\end{cor}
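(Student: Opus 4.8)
The plan is to realize the two functors as mutually quasi-inverse equivalences via the adjunction $\bigl(-\otimes_{\mathcal{H}}\cInd_{I(1)}^G\trivrep,\ (-)^{I(1)}\bigr)$, with unit $\eta_M\colon M\to(M\otimes_{\mathcal{H}}\cInd_{I(1)}^G\trivrep)^{I(1)}$, $m\mapsto m\otimes\trivrep$, and counit $\varepsilon_\pi\colon\pi^{I(1)}\otimes_{\mathcal{H}}\cInd_{I(1)}^G\trivrep\to\pi$. The preceding theorem says exactly that $\eta_M$ is an isomorphism for every $M\in\mathcal{C}_{\mathrm{f}}$. Granting this, it remains to check that the two functors carry $\mathcal{C}_{\mathrm{f}}$ and $\mathcal{C}_{G,\mathrm{f}}$ into one another and that $\varepsilon_\pi$ is an isomorphism for $\pi\in\mathcal{C}_{G,\mathrm{f}}$; the equivalence then follows formally.

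First I would treat the irreducible case. For a simple $M\in\mathcal{C}_{\mathrm{f}}$, the main theorem of \cite{arXiv:1406.1003_accepted} together with \cite[Theorem~4.17]{arXiv:1703.10384} and \cite[Theorem~5.11]{arXiv:1703.10384} provide an irreducible $\tau$, a subquotient of some $\Ind_B^G\sigma$, with $\tau^{I(1)}\simeq M$. The adjunction morphism $M\otimes_{\mathcal{H}}\cInd_{I(1)}^G\trivrep\to\tau$ is surjective, its image being a nonzero $G$-subrepresentation of the irreducible $\tau$; on $I(1)$-invariants it is identified, through $\eta_M$ and the preceding theorem, with $\mathrm{id}_M$, so its kernel has no $I(1)$-invariants and hence vanishes because $I(1)$ is pro-$p$ and $C$ has characteristic $p$. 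Thus $M\otimes_{\mathcal{H}}\cInd_{I(1)}^G\trivrep\simeq\tau$ is irreducible and is a subquotient of a principal series.

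Next I would check that the functors restrict. For $\pi\in\mathcal{C}_{G,\mathrm{f}}$ with a composition series $0=\pi_0\subset\cdots\subset\pi_m=\pi$, left exactness of $(-)^{I(1)}$ gives embeddings $\pi_i^{I(1)}/\pi_{i-1}^{I(1)}\hookrightarrow(\pi_i/\pi_{i-1})^{I(1)}$; each $\pi_i/\pi_{i-1}$ is an irreducible subquotient of some $\Ind_B^G\sigma$, so by the cited classification $(\pi_i/\pi_{i-1})^{I(1)}$ is a simple $\mathcal{H}$-module in $\mathcal{C}$ (nonzero, since any nonzero smooth $C$-representation has nonzero $I(1)$-invariants), in particular finite-dimensional; since $\mathcal{C}_{\mathrm{f}}$ is closed under submodules and extensions, $\pi^{I(1)}\in\mathcal{C}_{\mathrm{f}}$. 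Conversely, for $M\in\mathcal{C}_{\mathrm{f}}$ with composition series $0=M_0\subset\cdots\subset M_n=M$ (all terms in $\mathcal{C}_{\mathrm{f}}$), the preceding theorem gives $(M_i\otimes_{\mathcal{H}}\cInd_{I(1)}^G\trivrep)^{I(1)}\simeq M_i$; taking $I(1)$-invariants of $0\to\Ker\to M_{i-1}\otimes_{\mathcal{H}}\cInd_{I(1)}^G\trivrep\to M_i\otimes_{\mathcal{H}}\cInd_{I(1)}^G\trivrep$ and using naturality of $\eta$ forces $\Ker$ to have vanishing $I(1)$-invariants, hence $\Ker=0$, and the cokernel is $(M_i/M_{i-1})\otimes_{\mathcal{H}}\cInd_{I(1)}^G\trivrep$, which is irreducible by the previous step. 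Hence $M\otimes_{\mathcal{H}}\cInd_{I(1)}^G\trivrep$ has finite length with all irreducible subquotients of the required form, and it is generated by $M\otimes\trivrep\subseteq(M\otimes_{\mathcal{H}}\cInd_{I(1)}^G\trivrep)^{I(1)}$ because $\cInd_{I(1)}^G\trivrep$ is generated over $G$ by $\trivrep$; so it lies in $\mathcal{C}_{G,\mathrm{f}}$.

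Finally, for $\pi\in\mathcal{C}_{G,\mathrm{f}}$ the counit $\varepsilon_\pi$ is surjective, since its image is a $G$-subrepresentation containing $\pi^{I(1)}$ and $\pi$ is generated by $\pi^{I(1)}$; and it is injective, because the triangle identity $(\varepsilon_\pi)^{I(1)}\circ\eta_{\pi^{I(1)}}=\mathrm{id}_{\pi^{I(1)}}$ together with the fact that $\eta_{\pi^{I(1)}}$ is an isomorphism (as $\pi^{I(1)}\in\mathcal{C}_{\mathrm{f}}$) shows $(\varepsilon_\pi)^{I(1)}$ is an isomorphism, whence $(\Ker\varepsilon_\pi)^{I(1)}=0$ and so $\Ker\varepsilon_\pi=0$. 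Therefore $\eta$ and $\varepsilon$ are natural isomorphisms on the respective subcategories, which is exactly the asserted equivalence. The only substantive inputs are Theorem~\ref{thm:injective} (equivalently the preceding theorem) and the classification of irreducibles; the one point deserving care is that $-\otimes_{\mathcal{H}}\cInd_{I(1)}^G\trivrep$ is not known to be exact, so finiteness of length of $M\otimes_{\mathcal{H}}\cInd_{I(1)}^G\trivrep$ must be extracted from the injectivity statement in the proof of the preceding theorem rather than from a naive filtration argument.
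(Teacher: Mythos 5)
Your proposal is correct and takes essentially the same route as the paper: the unit is an isomorphism by the preceding theorem, the counit is surjective because $\pi$ is generated by $\pi^{I(1)}$ and injective because its kernel has no $I(1)$-invariants (pro-$p$ group, characteristic $p$), and the classification results handle the irreducible cases, with induction on length for the rest. The only difference is that you spell out, via the filtration/naturality argument, that $M\otimes_{\mathcal{H}}\cInd_{I(1)}^G\trivrep$ indeed lies in $\mathcal{C}_{G,\mathrm{f}}$ (finite length, principal-series subquotients, generated by $I(1)$-invariants), a verification the paper leaves implicit.
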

\begin{proof}
By the classification theorem in \cite{MR3600042} and a result in \cite[Theorem~5.11]{arXiv:1703.10384}, if $\pi\in \mathcal{C}_{G,\mathrm{f}}$ is irreducible, then $\pi^{I(1)}\in \mathcal{C}_{\mathrm{f}}$.
Hence, by induction on the length, if $\pi\in \mathcal{C}_{G,\mathrm{f}}$ then $\pi^{I(1)}\in \mathcal{C}_{\mathrm{f}}$.

Let $\pi \in \mathcal{C}_{G,\mathrm{f}}$ and we prove that $\pi^{I(1)}\otimes_{\mathcal{H}}\cInd_{I(1)}^G\trivrep\to \pi$ is an isomorphism.
The homomorphism is sujrective since $\pi$ is generated by $\pi^{I(1)}$.
Let $\pi'$ be the kernel.
Then we have an exact sequence
\[
0\to (\pi')^{I(1)}\to (\pi^{I(1)}\otimes_{\mathcal{H}}\cInd_{I(1)}^G\trivrep)^{I(1)}\to \pi^{I(1)}
\]
and the last map is isomorphism by the theorem.
Hence $(\pi')^{I(1)} = 0$ and it implies $\pi' = 0$.
Therefore the homomorphism is also injective.
Combining with the previous theorem, we have proved the desired equivalence of categories.
\end{proof}

\end{document}